\newcommand{\C}{\mathbb{C}}
\newcommand{\QQ}{\mathbb{Q}}
\newcommand{\Z}{\mathbb{Z}}
\newcommand{\cO}{\mathcal{O}}
\newcommand{\mQ}{\mathcal{Q}}
\newcommand{\mR}{\mathcal{R}}
\newcommand{\ko}{\mathcal{O}}
\newcommand{\ke}{\mathcal{E}}
\newcommand{\sod}[1]{\langle #1 \rangle}
\newcommand{\PP}{\mathbb{P}}
\newcommand{\mL}{\mathcal{L}}
\newcommand{\of}{\mathcal{O}}
\newcommand{\W}{\bigwedge}
\DeclareMathOperator{\Db}{{D^{\mathrm{b}}}}
\DeclareMathOperator{\Proj}{Proj}
\newcommand{\contr}{\lrcorner}
\DeclareMathOperator{\End}{End}
\DeclareMathOperator{\sort}{sort}
\DeclareMathOperator{\Sym}{Sym}
\DeclareMathOperator{\ddim}{dim}
\DeclareMathOperator{\Gr}{Gr}
\DeclareMathOperator{\OGr}{OGr}
\DeclareMathOperator{\SGr}{SGr}
\DeclareMathOperator{\Hom}{Hom}
\DeclareMathOperator{\van}{van}
\DeclareMathOperator{\TT}{T_1}
\DeclareMathOperator{\Ml}{S_2 Gr}
\newtheorem{thm}{Theorem}[section]
\newtheorem{corollary}[thm]{Corollary}
\newtheorem{lemma}[thm]{Lemma}
\newtheorem{proposition}[thm]{Proposition}
\newtheorem{definition}[thm]{Definition}
\newtheorem{criterion}[thm]{Criterion}
\newtheorem{conj}[thm]{Conjecture}
\newtheorem*{aim*}{Aim of this paper}
\newtheorem*{problem}{Problem}
\def\l@subsection{\@tocline{1}{0,2pt}{2pc}{8mm}{\ \ }} 
\def\l@section{\@tocline{1}{0,2pt}{2pc}{8mm}{\ \ }} 
\author{Enrico Fatighenti }
\address{Department of Mathematical Sciences\\
Loughborough University\\
  LE113TU, UK}
\email[E.~Fatighenti]{e.fatighenti@lboro.ac.uk}
\author{Giovanni Mongardi}
\address{Dipartimento di Matematica \\
Alma Mater Studiorum - Università di Bologna\\
Piazza di Porta San Donato 5, 40126 Bologna, Italia}
\email[G.~Mongardi]{giovanni.mongardi2@unibo.it}
\title[FK3 and IHS]{Fano varieties of K3 type and IHS manifolds}
\begin{document}
\begin{abstract}
We construct several new families of Fano varieties of K3 type. We give a geometrical explanation of the K3 structure and we link some of them to projective families of irreducible holomorphic symplectic manifolds.
\end{abstract}
\maketitle

\tableofcontents

 \section{Introduction}
 Fano varieties and Irreducible Holomorphic Symplectic manifolds (for short, IHS) are two of the most studied classes of varieties in algebraic geometry. They are very different in nature (for example, they have different Kodaira dimensions) and they are often studied using different tools. Indeed, Fano varieties are at the core of birational geometry, while IHS manifolds (sometimes called hyperk\"ahler when the context is more differential-geometric) can be considered as a higher dimensional analogue of K3 surfaces, with lattice theory as one of the most relevant operative tools. 
 One of the most important properties of Fano varieties is their \emph{boundedness}: indeed it is well known that in every dimension there exists a finite number of families of Fano varieties up to deformations. This still holds if we allow some mild singularities, see \cite{birkar} for an up--to--date survey. It is therefore natural to aim for a classification, but such a problem is currently out of reach. A complete answer is known when the dimension is up to three, see for example \cite{ip99} for the smooth case. In the singular case the classification is still an open problem, even in low dimension and with mild singularities (for example terminal).
  From dimension four onwards, only partial results are known. In particular a known explicit bound in terms on the canonical volume is assumed to be hugely non-sharp (being a large number as $(n+2)^{(n+2)^{n2^{3n}}}$), already for $n=2$. The strategy for a partial classification usually is to consider special subclasses of Fano varieties, or fixing some other invariant, such as the \emph{index}. Recall that this the integer $\iota_X$ which is the maximal number for which the anticanonical class is divisible in the Picard group.
   It is a classical result that whenever a Fano $X$ is smooth, the index satisfies $\iota_X \leq \ddim X+1$, with the equality attained only in the case of projective space. Prime Fano varieties, that is Fano varieties with Picard rank $\rho=1$,  of index $\ddim X -2 \leq \iota_X \leq \ddim X+1$ are completely classified, as they are when $\iota_X \geq \frac{\ddim X+1}{2}$ and $\rho_X >1$. Again see \cite{ip99} for a complete list of results. Mukai's conjecture further bounds the Picard rank in terms of the index: namely the conjecture states that for a smooth Fano $\rho_X(\iota_X-1) \leq \ddim X.$ The general philosophy is therefore that high index Fano varieties are somewhat easier to classify than low index.\\
On the contrary the main problem in the study of IHS is the lack of examples. 
 Similarly to the case of Calabi-Yau manifolds, no result of boundedness is known in general for IHS manifolds (although there are some partial results if one fixes for example the Beauville-Bogomolov-Fujiki form). However, finding examples is definitely harder than in the Calabi-Yau case. The known deformation types include two series of examples  for every even dimension found by Beauville for every even dimension (Hilbert scheme of points on a K3 surface and a similar construction, called \emph{generalised Kummer variety} on an abelian surface), and two sporadic examples in dimension 6 and 10, found by O'Grady. Even if we fix the deformation type and we look for \emph{polarised} families (in analogy with the K3 case) the situation does not improve much: very few examples of projective families are known. A survey of this story can be found for example in \cite{beauville}.\\
The interplay between special classes of Fano varieties and IHS manifold is not a new story: a main example is the one of a maximal family of IHS fourfolds (deformation equivalent to the Hilbert Scheme of tow points on a K3 surface) as the Fano variety of lines of a smooth cubic fourfold, due to Beauville and Donagi. We remark that this is not the unique IHS that can be linked to a cubic fourfold, as the recent constructions of Lehn-Lehn-Sorger-van Straten, \cite{llsvs} (an 8-fold of K3$^{[4]}$-type) and Laza-Sacc\`a-Voisin, \cite{lsv} (example of OG10 manifold) highlight. The cubic fourfold is not the only Fano to which we can associate polarised families of IHS: this is indeed a common feature of a special subclass of Fano varieties, called \emph{Fano varieties of K3 type} (FK3 for short) whose study is the central topic of this paper. We give here the key definitions.
\begin{definition} Let $X$ be a smooth, projective $n$-dimensional Fano variety and $j$ be a non-negative integer. The cohomology group $H^j(X, \C) \cong \bigoplus_{p+q=j} H^{p,q}(X)$ (with $j \geq k$) is said to be of $k$ Calabi-Yau type if \begin{itemize} \item $h^{\frac{k+j}{2},\frac{j-k}{2}}=1$; \item $h^{p,q}=0$, for all $p+q=j, \ p <\frac{k+j}{2}$.
\end{itemize}
$X$ is said to be of $k$ (pure) Calabi-Yau type (k--FCY or Fano of k-CY type for short) if there exists at least a positive $j$ such that $H^j(X, \C)$ is of $k$ Calabi-Yau type. Similarly, $X$ is said to be of mixed $(k_1, \ldots, k_s)$ Calabi-Yau type if the cohomology of $X$ has different level CY structures in different weights.
\end{definition}

In the above definition, we consider all sub-Hodge structures, even those naturally arising using Lefschetz's Theorems (for us, a sixfold with a 2 Calabi Yau structure in $H^4$ and $H^4\cong H^6\cong H^8$ is a $(2,2,2)$ Calabi-Yau type). In the paper, we will say that a Fano variety is \emph{central} if all its cohomology groups have level 0.\\ 
A Fano variety of K3 type (FK3) is nothing but a 2-FCY. Fano varieties of CY type were first introduced and studied by Iliev and Manivel in \cite{ilievmanivel}. The authors focus on the case $k=3$, adding moreover an extra condition on the $H^1(T_X)$ (which we do not ask, since it would rule out already the cubic fourfold and many other interesting examples). They classify 3-FCY that can be obtained by slicing homogeneous spaces with linear and quadratic equations. We remark that our definition is purely Hodge-theoretical, but there are deep links with the concept of CY subcategories, see for example \cite{kuzicy}. In particular, constructing examples of Fano varieties of K3 and CY type might help in finding new playground for testing Kuznetsov's conjecture on rationality.\\
We are especially interested in the FK3 case, due to its deep relation with IHS manifolds. Indeed, a result of Kuznetsov and Markushevich  in \cite{kuzm} shows that if $\mathfrak{M}$ is a moduli space  of stable or simple sheaves on $X$, then any form in $H^{n-q-2}(X, \Omega^{n-q})$ defines a closed 2-form in $H^0(\mathfrak{M}^{\textrm{smooth}}, \Omega^2)$. This is indeed a good starting point in the hunt for examples of IHS. In particular, let us mention the IHS linked to the Debarre-Voisin twentyfold hypersurface, or to a Gushel--Mukai fourfold, or to a section of a product of $\PP^3$, all examples of FK3 varieties, see \cite{debarrevoisin}, \cite{dk16}. \cite{ilievmanivel2}. \\
Although FK3 are definitely easier to hunt than IHS, there are not many known examples in the literature. For example, as complete intersections in (weighted) projective spaces one finds only the cubic fourfold, see \cite{ps18}. More examples are found if one allows terminal and $\QQ$-factorial singularities, see \cite{frz19} but no new examples of IHS are produced anyway. In \cite{eg1} we conjectured that even taking complete intersection in Grassmannian one does not get any new example other than a complete intersection with four linear hypersurfaces in the Grassmannian $\Gr(2,8)$ and the above mentioned examples. This paper deals with the construction of examples of FK3 as zero locus of general global section of homogeneous vector bundles in Grassmannians or products of such. This is motivated by the list of K\"uchle, see \cite{kuchle}, of index 1 Fano fourfolds obtained in such a way, where few more interesting FK3 are found.  Therefore the aim of this paper is twofold:
\begin{aim*} \begin{enumerate}\item Construct new examples of Fano varieties of K3 type;
\item Construct examples of polarised families of IHS from our FK3.
\end{enumerate}
\end{aim*}
This paper is a first step of this project. One of its main aim is to show that there might be a lot of examples \emph{out there}. Even if obtaining a complete classification of all Fano of K3 type might be out of reach, a classification might be attainable if we restrict ourselves to some special subcases, for example Fano obtained as zero locus on Grassmannians and homogeneous varieties. The main problem here is that in general translating the (Hodge-theoretical) requirement of being of K3 type into algebraic conditions is not easy. Using some tools that we developed in \cite{eg1} we were anyway able to find some numerological condition useful to produce examples of FK3, see Numerology \ref{num}. Unfortunately the condition in \ref{num} are still too general for replicating a classification-type argument as the original one from K\"uchle. However, \ref{num} has the advantage of highlighting the connection between FK3 and \emph{central} Fano varieties, that is Fano such that $h^{p,q}\neq 0$ if and only if $p=q$.  When this happens, we say that all the cohomology groups of $X$ has \emph{level} (lv) 0, see \ref{num}. Indeed a future problem we are interested in is the following, possibily up to restriction to some special subcases, with zero locus of sections of homogeneous vector bundles as a first step.
\begin{problem} Classify Fano varieties such that lv $(H^j(X,\C))=0$ for all $j$.
\end{problem}
\subsection{How we subdivide the examples}
 We first write down the list of examples that we have found. Later on in the paper we will explain the numerology behind our list, and give a detailed geometrical description of our examples. Our purpose its twofold. Indeed to a Fano of K3-type we want to associate (whenever possible) both a K3 category and an IHS manifold. For the definition of K3 or CY (sub) category we follow \cite{kuzicy}. Before doing this, we need to prove first that the families of Fano that we consider are of K3 type. This is done usually with either Riemann-Roch type computations as for example in \ref{m7} or using our Griffiths ring-type construction as in Proposition \ref{s1}, or via a Borel-Bott-Weil computation, as in Proposition \ref{t129}.
  In particular we divide our list into three distinct blocks. We say that a FK3 $X$ is of \emph{blow-up} type (\textbf{B}) if there exists a pair $(Y,S)$, with $S \subset Y$, $Y$ Fano, $S$ K3 surface such that $X \cong Bl_S Y$. Examples of this type are already included in K\"uchle list, \cite{kuchle}, called \emph{c7} and \emph{d3}. We say that a FK3 $X$ is of \emph{Mukai type} (\textbf{M}) if we can reduce systematically the study of its derived category to Mukai's classification of Fano threefolds. We say that a FK3 $X$ is \emph{sporadic} (\textbf{S}) if it does not fall in one of the two previous categories. We collect all our list of examples of FK3 in Table \ref{table}. \\
 For FK3 of blow-up and Mukai type the question on the existence of a K3-subcategory admits always a positive answer. This is the content of  Propositions \ref{lem:blowup}, \ref{rennemo} and Theorem \ref{cayley}. However the question of existence of an IHS linked to any FK3 is far from being answered. We give an example in Proposition \ref{gp2} . For the FK3 of sporadic type, we do not have any information a priori. For all of them the question on the existence of a K3-subcategory is open, and even we have to cook up ad-hoc methods even to show that they are of K3 type (in the Hodge theoretical sense). Here as well there is no easy answer from the IHS viewpoint. A new construction is given for example in Proposition \ref{o2}. Special attention must be placed upon example (\textbf{S6}) and (\textbf{S7}). Indeed they are cut by irreducible vector bundles which are not linear. We observe as well the appereance of mixed structures of $(2,3)$-CY type. The last part of the paper is devoted to the study of these varieties. The results about IHS are collected in Table \ref{table2}. We point out that we believe that to any of the example in Table \ref{table} we will eventually be able to construct an example of polarised IHS. We added in both our tables two examples found independently by Iliev and Manivel in \cite {ilievmanivel2}, while our work was still in the very early stage. These are the families \textbf{B1} and \textbf{S3}. Although they were already known we decided to include them anyway in our list, since they fit perfectly in our pattern. 

We highlight now the main results and the structure of this paper.
\subsection{Results and Structure}
This paper is devoted to the construction of a meaningful bunch of examples of Fano varieties of K3 type. We mainly exploit our numerological condition in \ref{num}, coming from a similar analysis to the one we carried out in \cite{eg1}. Our main result can be summarised in
\begin{thm}There exists 23 examples of families of Fano varieties of K3 type obtained as zero locus of general global section of homogeneous vector bundles over Grassmannians or products of such. These Fano varieties have dimension $4 \leq n \leq 20$, Picard rank $1 \leq \rho_X \leq 3$ and index $\frac{n-1}{2}\leq \iota_X \leq \frac{n}{2}$.
\end{thm}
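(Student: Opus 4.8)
The plan is to establish the theorem by an explicit construction-plus-verification strategy, since the statement is fundamentally an existence claim backed by a concrete census. The theorem asserts three things simultaneously: that there exist $23$ families of the stated form, that each is genuinely Fano of K3 type, and that the listed numerical invariants ($4 \leq n \leq 20$, $1 \leq \rho_X \leq 3$, $\tfrac{n-1}{2} \leq \iota_X \leq \tfrac{n}{2}$) hold across the census. I would therefore not attempt a single unified argument but rather organize the proof as a verification that the entries of Table \ref{table} satisfy all the required properties. The core tool is the numerological condition recorded in Numerology \ref{num}, which translates the Hodge-theoretic requirement of being of K3 type into tractable conditions on the Chern data of the homogeneous vector bundles involved.

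First I would fix, for each of the $23$ families, the ambient Grassmannian (or product $\Gr(k_1,n_1) \times \cdots$) together with the homogeneous vector bundle $E$ whose general global section cuts out $X$. For each such datum I would verify that the zero locus is smooth of the expected codimension (via generic smoothness / Bertini for globally generated bundles) and Fano, computing the anticanonical class from the adjunction formula $K_X = (K_{\text{amb}} + \det E)|_X$ and checking positivity. The Picard rank $\rho_X$ and index $\iota_X$ then follow: for most families a Lefschetz-type hyperplane theorem forces $\rho_X$ to agree with that of the ambient space (giving $\rho_X = 1$ in the Grassmannian case and $\rho_X \leq 3$ for products), and $\iota_X$ is read off from the computed anticanonical divisor. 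The bound $\tfrac{n-1}{2} \leq \iota_X \leq \tfrac{n}{2}$ is then a finite check across the list.

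Next I would establish the K3-type property for each family. As the excerpt signals, there is no uniform method: depending on the family I would deploy either a Riemann-Roch / Hirzebruch computation of the relevant Hodge numbers $h^{p,q}$ to confirm the defining conditions $h^{\frac{k+j}{2},\frac{j-k}{2}}=1$ and vanishing below, or the Griffiths-ring construction (following Proposition \ref{s1}), or a Borel-Bott-Weil calculation of the cohomology of the twisted bundles (following Proposition \ref{t129}). Concretely, for each family one identifies the distinguished weight $j$ and verifies that the primitive part of $H^j(X,\C)$ carries exactly a K3-level Hodge structure. This is where the numerological condition does its work: it predicts which families are candidates, and the three computational techniques confirm the prediction case by case.

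The main obstacle, and the genuine mathematical content, is precisely this last step of \emph{certifying} the K3-type structure rather than merely its numerology. Numerology \ref{num} produces necessary conditions on dimensions of cohomology groups, but being of K3 type is a statement about the actual Hodge decomposition — one must exhibit the one-dimensional $(2,0)$-piece (or its analogue in the correct weight) and prove the vanishing of everything of lower Hodge type, not just count Euler characteristics. The difficulty is sharpest for the sporadic families, and especially for examples \textbf{S6} and \textbf{S7}, where the cutting bundles are irreducible and non-linear, so that Borel-Bott-Weil must be applied to genuinely non-split bundles and one encounters the mixed $(2,3)$-CY phenomenon; here the bookkeeping of the Koszul resolution and the associated hypercohomology spectral sequence is delicate and must be carried out individually. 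I expect the bulk of the work, and any subtlety in the proof, to reside in discharging these per-family Hodge-theoretic verifications and in ruling out that a candidate family secretly fails the vanishing condition.
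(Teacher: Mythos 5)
Your overall strategy --- treat Criterion \ref{num} as a candidate-generating heuristic, then certify each entry of Table \ref{table} individually (smoothness and the Fano property via adjunction, $\rho_X$ and $\iota_X$ via Lefschetz and the anticanonical class, and the K3 structure via per-family Hodge-number computations) --- is correct and matches the paper's organization of the proof as a census verification. The genuine difference is in how the K3-type property is certified for the bulk of the list. You propose to compute Hodge numbers family by family (Riemann--Roch, Griffiths ring, or Borel--Bott--Weil) for all $23$ cases; the paper instead disposes of the entire \textbf{B} and \textbf{M} blocks (15 of the 23 families) essentially uniformly, via Lemma \ref{lem:blowup} and the Cayley trick of Theorem \ref{cayley} (supplemented by the HPD-based Proposition \ref{rennemo} for M3, where the Cayley trick does not apply directly). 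For these families the cutting bundle on $Y\times\PP^{r-1}$ comes from a bundle $E$ on $Y$ whose zero locus is a Mukai-model K3 surface $S_g$, and Orlov's theorem gives $D^b(S_g)\hookrightarrow D^b(X)$; the Hodge-theoretic K3 structure then follows for free, either by Hochschild homology or by Konno's older result identifying the vanishing cohomologies of $S$ and $X$ up to a shift. Your route is viable --- the paper explicitly performed the Riemann--Roch computations as sanity checks, and one such computation is written out in full as Proposition \ref{gq1} --- but it is substantially more laborious and, more importantly, it only yields the numerical statement, not the K3 \emph{category} inside $D^b(X)$ that the paper records in the last column of Table \ref{table}. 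For the sporadic block your assessment is accurate: there the paper does exactly what you describe (Borel--Bott--Weil through the Koszul resolution as in Propositions \ref{t129} and \ref{3k3}, a torus-action argument for centrality of the ambient bisymplectic Grassmannian, and the Griffiths-ring method of Proposition \ref{s1}), and the delicate step you flag --- that an Euler-characteristic count alone does not pin down the Hodge decomposition --- is precisely where the paper must argue the surjectivity of an equivariant map $\End(V)\to\W^3 V^\vee$ for a general three-form in order to resolve the spectral sequence.
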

See Table \ref{table} for the list of this Fano varieties. We point out that since they have an index which is comparatively high with respect to the dimension (close to Wisniewski's bound), there could be hope for a classification.
For each of this Fano we first needed to prove that they are of K3 type. We either explain geometrically in a systematic way (whenever possible) the presence of K3 structure (both from a Hodge-theoretical and derived category viewpoint) or we give an ad-hoc description for the sporadic cases. We point out that new examples may and will be discovered and analysed in a series of future works.\\
Some of the Fano we analyse have new and interesting behaviours. We collect some of the results here.
\begin{thm} There exists prime Fano varieties with multiple CY structures (see Proposition \ref{3k3}) and with mixed Calabi-Yau (2,3) structure, (see Proposition \ref{23cy}).
\end{thm}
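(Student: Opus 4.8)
The statement is an existence claim, so the plan is to exhibit explicit members of the list in Table~\ref{table} that realise the two phenomena and to pin down their Hodge data by direct computation. For the multiple Calabi--Yau structures I would single out one of the Picard-rank-one Fano in the list whose cohomology carries several independent $2$-Calabi--Yau (K3-type) sub-Hodge structures---whether these occur in distinct weights or as several summands inside one primitive weight---realising the count in Proposition~\ref{3k3}. For the mixed case I would use the sporadic examples (\textbf{S6}) and (\textbf{S7}), cut by the non-linear irreducible homogeneous bundles, which Numerology~\ref{num} already flags as carrying a level-$2$ piece in one weight and a level-$3$ piece in another, as asserted in Proposition~\ref{23cy}.

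The technical core is a cohomology computation. First I would realise each such $X$ as the zero locus $Z(s)$ of a general section of a homogeneous bundle $\mathcal{E}$ on a Grassmannian $G$, and resolve $\mathcal{O}_X$ by the Koszul complex $0 \to \bigwedge^{r}\mathcal{E}^{\vee} \to \cdots \to \mathcal{E}^{\vee} \to \mathcal{O}_G \to \mathcal{O}_X \to 0$ with $r=\operatorname{rk}\mathcal{E}$. To reach the Hodge numbers $h^{p,q}(X)$ I would combine this with the conormal sequence $0 \to \mathcal{E}^{\vee}|_X \to \Omega^1_G|_X \to \Omega^1_X \to 0$ and its exterior powers, so as to express each $\Omega^p_X$ through homogeneous bundles on $G$. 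Every graded piece that appears is then a direct sum of irreducible homogeneous bundles whose cohomology is delivered by Borel--Bott--Weil, and the $h^{p,q}(X)$ are extracted from the resulting hypercohomology spectral sequence.

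With the Hodge diamond in hand, the Calabi--Yau structures are read off combinatorially from the defining conditions: a weight $j$ gives a $k$-Calabi--Yau block exactly when $h^{\frac{k+j}{2},\frac{j-k}{2}}=1$ and all $h^{p,q}$ with $p+q=j$ and $p<\frac{k+j}{2}$ vanish. For Proposition~\ref{3k3} I would display the required number of level-$2$ (K3-type) blocks; for Proposition~\ref{23cy} I would verify that one primitive weight is of level $2$ and a different primitive weight is of level $3$, so that $X$ is genuinely of mixed $(2,3)$-Calabi--Yau type rather than of $2$-type with a Lefschetz shadow. Primality follows from the Lefschetz-type theorem for zero loci of globally generated bundles: the relevant examples sit inside a single Grassmannian of Picard rank one and have dimension well inside the range where restriction of Picard groups is an isomorphism, forcing $\rho_X=1$.

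The main obstacle is the Borel--Bott--Weil bookkeeping for (\textbf{S6}) and (\textbf{S7}). Because the cutting bundles are irreducible but \emph{non-linear}, the exterior powers $\bigwedge^{i}\mathcal{E}^{\vee}$ no longer split as conveniently as in the split or linear cases, and I expect the decomposition of each $\bigwedge^{i}\mathcal{E}^{\vee}\otimes\Omega^{p}_G$ into irreducible homogeneous summands---followed by the weight-by-weight application of Bott's algorithm---to be where the genuine work lies. A secondary difficulty is controlling the spectral sequence: one must check that enough terms vanish, or that the surviving differentials cancel, so that the off-diagonal entries of the diamond are pinned down exactly, since the whole theorem rests on producing a single $(k,0)$-class together with clean vanishing below it in each relevant weight.
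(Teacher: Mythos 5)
Your proposal matches the paper's argument: the theorem is proved by exhibiting $\TT(2,10)$ (example \textbf{S6}, Proposition~\ref{3k3}, with K3 structures in $H^6$ and $H^8$) and its hyperplane section (example \textbf{S7}, Proposition~\ref{23cy}, with the additional weight-$7$ level-three piece), and the Hodge numbers are obtained exactly as you describe, via the Koszul resolution of $\mQ^*(1)$ on $\Gr(2,10)$, the exterior powers of the conormal sequence, and Borel--Bott--Weil, with the delicate step being a non-formal surjectivity of an equivariant map $\W^3 V_{10}\to \End(V_{10})$ in the spectral sequence. The only slip is attributing the mixed $(2,3)$ structure to both \textbf{S6} and \textbf{S7}: \textbf{S6} carries only K3-type pieces, and the mixed structure appears only after taking the linear section \textbf{S7}.
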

To the best of our knowledge, these are the first examples of known prime Fano varieties with this property. The prime hypothesis eliminates the possibility for these CY structure to come from a blow-up, a projective bundle or other related constructions. We link some of these Fano varieties to projective families of IHS manifolds. Unfortunately, up to now we have only found new ways of describing old examples, but we believe that a further extensive examination of our list could lead to new constructions. We collect our results here.
\begin{thm}We show that the Hilbert square on a K3 of genus 8 is isomorphic to the zero locus of a certain bundle on $\Gr(4,6) \times \Gr(2,6)$, see Proposition \ref{gp2}. We show that the Debarre-Voisin IHS 4-folds are isomorphic to the space of special rational fourfolds on varieties of type $\TT(2,10)$, see Proposition \ref{t2} and to the compactification of the space of $(\PP^1)^3$ on a linear section of $\Ml(3,8)$, see Theorem \ref{hk}.
\end{thm}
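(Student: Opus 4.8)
The statement collects three independent isomorphisms, proven in Propositions \ref{gp2} and \ref{t2} and in Theorem \ref{hk}; I sketch a strategy for each. The common thread is to exhibit an explicit algebraic map to the relevant IHS fourfold and then upgrade it to an isomorphism by a bijectivity-plus-smoothness argument via Zariski's Main Theorem, using the Hodge-theoretic identification only as a consistency check. For the Hilbert square, the plan is to start from Mukai's model of a general polarized K3 surface of genus $8$ as a transverse linear section $S=\Gr(2,V_6)\cap \PP^8\subset \PP(\bigwedge^2 V_6)$, i.e.\ the zero locus of six general Pl\"ucker-linear forms. A length-two subscheme $\{U_1,U_2\}\subset S$ spans a $4$-plane $U_4=U_1+U_2\in\Gr(4,V_6)$ (well defined also along the non-reduced locus via the first-order span), producing a morphism $S^{[2]}\to\Gr(4,6)$ which, combined with the tautological subspace on the $\Gr(2,6)$-factor, lands in $\Gr(4,6)\times\Gr(2,6)$. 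I would then define the candidate $\mathcal Z$ as the zero scheme of the homogeneous bundle $\mathcal E$ whose sections simultaneously impose the incidence $U_2\subset U_4$ (the vanishing of the tautological map $\mathcal U_2\to V_6/\mathcal U_4$) and the six Mukai conditions, and check that $\operatorname{rk}\mathcal E=12$ so that the expected dimension is $16-12=4$. Global generation of $\mathcal E$ yields smoothness of $\mathcal Z$ by Bertini; the map $S^{[2]}\to\mathcal Z$ is then injective with smooth four-dimensional target, hence an isomorphism onto its image, and a degree count forces surjectivity. The efficient conceptual alternative is to recognize $\mathcal Z$ as a moduli space $M_S(v)$ of sheaves on $S$ obtained from $S^{[2]}=M_S(1,0,-1)$ by the Fourier--Mukai transform attached to the restriction to $S$ of the tautological bundle.

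For the first Debarre--Voisin statement I would fix a general trivector $\sigma\in\bigwedge^3 V_{10}^\vee$, so that the DV fourfold is $K_\sigma=\{W_6\in\Gr(6,V_{10}):\sigma|_{\bigwedge^3 W_6}=0\}$, and realize $\TT(2,10)$ as the associated Fano of K3 type cut out by $\sigma$ in the relevant Grassmannian. The key is to attach to each $W_6\in K_\sigma$ a canonical four-dimensional rational subvariety $R_{W_6}\subset\TT(2,10)$, namely the locus of those points of $\TT(2,10)$ whose underlying subspace is contained in $W_6$, i.e.\ a linear section of the sub-Grassmannian $\Gr(2,W_6)$, which is rational and exactly four-dimensional precisely because $\sigma$ vanishes on $\bigwedge^3 W_6$. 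I would show that $W_6\mapsto R_{W_6}$ is a morphism into the parameter space $\mathcal R$ of these special fourfolds, and construct its inverse by recovering $W_6$ as the linear span of the subspaces parametrized by $R_{W_6}$; bijectivity plus smoothness of both sides then yields the isomorphism. As a cross-check, the sub-Hodge structure of K3 type on $\TT(2,10)$ furnished by our numerology and by the Kuznetsov--Markushevich mechanism must match the weight-two Hodge structure of $K_\sigma$, which pins down the polarization.

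For the last statement I would take a general linear section $Y$ of $\Ml(3,8)=S_2\Gr(3,8)$ of the appropriate codimension and study the family of subvarieties of $Y$ isomorphic to $(\PP^1)^3$. To each such triple product of lines one associates, through its three rulings and their incidence with the tautological flags, a distinguished six-dimensional subspace of $V_{10}$ lying in $K_\sigma$; conversely a point of $K_\sigma$ produces a $(\PP^1)^3$ in $Y$. This gives a birational map between $K_\sigma$ and the space of such $(\PP^1)^3$'s, which I would then extend to the promised compactification by analyzing the degenerations in which the triple product of lines breaks up, and finally verify that the extended map is a bijective morphism between smooth fourfolds.

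The main obstacle in all three cases is global rather than local: the geometric recipes (span of two points, sub-Grassmannian attached to $W_6$, triple of lines) are transparent on a dense open set, but controlling them along the degenerate loci is where the real work lies. Concretely, one must prove smoothness and transversality of the relevant zero loci, so that Bertini and Zariski's Main Theorem apply, and then show that the constructed morphisms stay bijective, with no contracted loci, across the boundary --- the non-reduced length-two schemes in Proposition \ref{gp2}, the $W_6$ for which the attached fourfold degenerates in Proposition \ref{t2}, and especially the boundary of the space of $(\PP^1)^3$'s in Theorem \ref{hk}, where the compactification is forced. The Hodge-theoretic identification guarantees that the two fourfolds are deformation equivalent but does not by itself produce a biregular isomorphism, so the constructive argument cannot be bypassed.
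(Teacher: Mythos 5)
Your general strategy (explicit map, then bijectivity plus smoothness) is closest to the paper only for Proposition \ref{t2}, where your construction essentially coincides with the one given there: the fibre over $W_6$ is the codimension-four linear section of $\Gr(2,W_6)$ cut by isotropy for the four-dimensional space $H_4\subset \W^2W_6^\vee$ of two-forms, which the paper extracts from the filtration of $\W^3V_{10}^\vee\otimes\cO$ on $\Gr(6,10)$ (on $Z_{DV}$ the three-form lifts to a section of $\mR^\perp\otimes\W^2\mR^\vee$, i.e.\ a map $V_{10}/W\to\W^2W^\vee$ with four-dimensional image). Note, though, that the paper only proves \emph{birationality} in Proposition \ref{t2}, with generic injectivity coming from the fact that distinct $W_6$ give distinct ambient $\Gr(2,6)$'s.

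For Proposition \ref{gp2} your construction is wrong. The bundle in the proposition is $\W^2\mR^\vee_{4,6}\otimes\mR^\vee_{2,6}$, of rank $\binom{4}{2}\cdot 2=12$; the bundle you describe (incidence $\mathcal U_2\to V_6/\mathcal U_4$ plus six Mukai conditions) has rank $4+6=10$, not $12$ as you assert, and its zero locus is the flag-type variety $\{(U_4,U_2):U_2\in S,\ U_2\subset U_4\}$, a $\Gr(2,4)$-bundle over $S$ of dimension $6$, not $S^{[2]}$. The crucial point you miss is that the $\Gr(2,6)$-factor does \emph{not} carry one of the two points of the length-two subscheme: writing the tensor $h\in\W^2V_6^\vee\otimes V_6^\vee$ defining \textbf{M7} as a map and restricting it to the span $W=U_1+U_2\in\Gr(4,6)$, one obtains a rank-two map whose image $P$ is the second component; $W$ determines $P$, and the paper concludes by recognizing $S^{[2]}\to\Gr(4,6)$ as the Beauville--Donagi assignment of a line in the Pfaffian cubic fourfold, hence an isomorphism. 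For Theorem \ref{hk} your sketch also misses the actual mechanism: the paper fixes $V_2=\langle v,w\rangle\subset V_{10}$, decomposes $\omega=\omega_8+v^\vee\wedge\sigma_1+w^\vee\wedge\sigma_2$, and uses the induced projection $\Gr(6,10)\dashrightarrow\Gr(6,8)$, whose indeterminacy on $Z_{DV}$ is a degree-$132$ curve of six-spaces meeting $V_2$ in a line; after blowing up this curve the image is exactly $Z\subset\Gr(6,8)$, the zero locus of the rank-eight bundle $F^\vee$, and a point of $Z$ parametrizes a copy of $\Ml(3,6)\cong(\PP^1)^3$ in $X_1$. Degree one is checked by a case analysis on six-spaces with the same projection, and the upgrade from birational to biregular is \emph{not} Zariski's Main Theorem: it is the observation that $Z$ and $Z_{DV}$ are birational minimal models with trivial canonical class, so the map would be a flop, and a flop cannot be undefined in codimension at most two on an IHS fourfold. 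Your ``three rulings and tautological flags'' recipe and your generic appeal to ZMT do not engage with any of these steps, and the boundary analysis you flag as the hard part is precisely what the projection/blow-up/flop argument is designed to handle.
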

These results are collected in Table \ref{table2}. We spend a few words on the structure of this paper. In \textbf{Section 2} we explain how our numerological condition creates the list and we 
explain some straightforward geometric tricks and a general strategy to attack these Fano varieties. In \textbf{Section 3} we perform a case--by--case analysis of the most interesting examples and we prove our main results. We finish with a bunch of \textbf{Appendices}, where we describe three related cases we encountered: some extra Fano  varieties of 3CY type, a trio of infinite series of Calabi--Yau varieties and a Fano variety with a fake K3 structure.
 \subsubsection*{Acknowledgements}
 This paper was completed throughout the course of the past year and a half. The work was carried out mainly at Roma Tre and Bologna University, in several of its campus sites (although some of the latter were not officially recognised by its own administration). Many people gave useful comments and suggestions throughout the whole process. We mention in particular Atanas Iliev, for sharing with us some of the ideas that led to Theorem \ref{hk}, and Alexander Kuznetsov, for many suggestions, ideas shared and comments on an early draft of this manuscript. Many of the computations were carried out using a Macaulay2 code written by the first author together with Fabio Tanturri. We thank as well for various ideas, conversations and support Hamid Ahmadinezhad, Vladimiro Benedetti, Marcello Bernardara, Daniele Faenzi, Lorenzo Federico, Camilla Felisetti, Michal and Grzegorz Kapustka, Laurent Manivel, Luca Migliorini, Claudio Onorati, Miles Reid and J\o rgen Rennemo. EF was supported by MIUR-project FIRB 2012 "Moduli spaces and their applications" and by an EPSRC Doctoral Prize. GM was supported by ``Progetto di ricerca INdAM per giovani ricercatori: Pursuit of IHS''.  Both authors are member of the INDAM-GNSAGA and received support from it.
 
 \section{The quest for examples}

 \textbf{Notation for the paper and for the tables}
 With $\mR$ and $\mQ$ we denote (respectively) the rank $k$ tautological and the rank $n-k$ quotient bundle on the Grassmanian $\Gr(k,n)$. We fix the convention that $\of_G(1) =\mathrm{det}(\mQ)=\mathrm{det}(\mR^{\vee})$.
 $\mathrm{S}_i\Gr(k,n)$ denotes the $i$-th symplectic Grassmannian. The most relevant cases for us are for $k=1$ and $k=2$. For $i=1$ this variety is nothing but the usual symplectic Grassmannian (usually called Lagrangian when $2k=n$), for $i=2$ it is the \emph{bisymplectic Grassmannian}, which will be better defined and characterised later in the paper. If $k=1$ we will simply write $\SGr(k,n)$. $\overline{\SGr(3,6)}$ in the table will denote a linear section of $\SGr(3,6)$. $\OGr(k,n)$ denotes the orthogonal Grassmannian and $\mathbb{S}_{n} $ we denote one of the two connected components of $\OGr(n,2n)$ in its spinor embedding.
 $\TT(k,n)$ denotes the subvariety of $\Gr(k,n)$ cut by the zero locus of a general three-form $\sigma \in \W^3 V_n^\vee$. According to $k$, $\TT(k,n)$ can be represented as the zero locus of a general global section of a different vector bundle. As an example, if $k=3$, $\TT(3,n)$ is nothing but a linear section of the Grassmannian $\Gr(3,n)$, if $k=2$, $\TT(2,n)$ is the congruence of lines given by the bundle $\mQ^*(1)$ and if $k=4$ the bundle is of course $\W^3 \mR^{\vee}$.\\
 We use $X_1 \subset G$ to denote a linear section of the variety $G$ (and similarly for higher degree or multidegree). Whenever there might be ambiguity or we want to emphasize the choice of the linear subspace we might write $X_H$. Similarly, sometimes we will use the shorthand $X_{\mathcal{F}} \subset G$ to denote the zero locus of a general global section of the vector bundle $\mathcal{F}$ over $G$.\\
 The notation $H^n_{\van}(X)$ (and similarly for the $(p,q)$ part) will denote the vanishing subspace of the cohomology group, see \cite[2.27]{voisin2} for a definition. \\
 If $X$ and $Y$ are smooth projective variety we will use the shortand $D^b(X) \hookrightarrow D^b(Y)$ to mean that one can construct a semiorthogonal decomposition for $D^b(Y)$ where $D^b(X)$ appears as one of the factors, up to a fully faithful functor.\\
  The notation $S_g$ means a K3 surface of genus g. With $Q_k$ we indicate the $k$-dimensional quadric hypersurface.

\begin{table}[ht]
\centering
\begin{tabular}{@{} *9l @{}l @{}l @{}l @{}l @{}l @{}}    \toprule
no. & \emph{$X \subset Y$}& $\ddim X$ & $\iota_X$ & $\rho_X$ &  Comments \\ \midrule
B1 & $X_{(2,1,1)} \subset \PP^3 \times \PP^1 \times \PP^1$ &4& 1 & 3&  $X \cong Bl_{S_7} (\PP^3 \times \PP^1)$\\
B2 & $X_{(2,1)} \subset \Gr(2,4) \times \PP^1$& 4&1&2&  $X \cong Bl_{S_5} \Gr(2,4)$\\
M1 & $X_{(1,1,1)} \subset \PP^3 \times \PP^3 \times \PP^3$& 8& 3&$ 3$ &$D^b(S_{3} )\hookrightarrow D^b(X)$ \cite[Section 4]{ilievmanivel2} \\
M2 & $X_{(1,1,1)} \subset Q_3 \times \PP^2 \times \PP^2$ & 6 & 2& 3& $D^b(S_{4}  )\hookrightarrow D^b(X)$\\
M3 & $X_{(1,1)} \subset \Gr(2,5)\times Q_5$& 10 &4& 2&  $D^b(S_6) \hookrightarrow D^b(X)$ \\
M4 & $X_{(1,1)} \subset \SGr(2,5) \times Q_4$&8&3& 2&$D^b(S_6) \hookrightarrow D^b(X)$  \\
M5&  $X_{(1,1)} \subset \Ml(2,5) \times Q_3$& 6&2&2& $D^b(S_6) \hookrightarrow D^b(X)$\\
M6& $X_{(1,1)} \subset \mathbb{S}_{5} \times \PP^7$& 16& 7& 2&$D^b(S_7) \hookrightarrow D^b(X)$\\
M7 & $X_{(1,1)} \subset \Gr(2,6) \times \PP^5$& 12 & 5& 2 &  $D^b(S_{8}) \hookrightarrow D^b(X)$\\
M8& $X_{(1,1)} \subset \SGr(2,6) \times \PP^4$& 10& 4& 2&$D^b(S_8) \hookrightarrow D^b(X)$\\
M9& $X_{(1,1)} \subset \Ml(2,6) \times \PP^3$& 8 &3& 2 & $D^b(S_8) \hookrightarrow D^b(X)$\\
M10 & $X_{(1,1)} \subset \SGr(3,6) \times \PP^3$&8&3& 2&$D^b(S_9) \hookrightarrow D^b(X)$  \\
M11 & $X_{(1,1)} \subset \overline{\SGr(3,6)} \times \PP^2$&6&2& 2&$D^b(S_9) \hookrightarrow D^b(X)$  \\
M12& $X_{(1,1)} \subset \mathrm{G}_2 \times \PP^2$& 6& 2& 2& $D^b(S_{10}) \hookrightarrow D^b(X)$\\ 
M13& $X_{(1,1)}\subset \Gr(2,8)\times \PP^3$ & 14 & 1 & 2 & $D^b(S_3) \hookrightarrow D^b(X)$\\
S1 & $X_{1^4} \subset \Gr(2,8)$& 8 & 4& 1 & $D^b(S_3) \hookrightarrow D^b(X)$ \\
S2& $X_1 \subset \OGr(3,8)$ &8&3& 2&  $D^b(S_7) \hookrightarrow D^b(X)$\\
S3& $X_1 \subset \SGr(3,9)$ &14&6& 1 & \cite[Section 5]{ilievmanivel2}\\
S4& $X_1 \subset \Ml (3,8)$ &8&3& 1 & \\
S5& $X_1 \subset \TT(2,9)$ &6&2& 1 &\\
S6& $\TT(2,10)$ &8&3& 1 &$ 3 \times $ K3 structure\\
S7& $X_1 \subset \TT(2,10)$ &7&2& 1 & $ 2 \times $ K3 structure, $ 1 \times $ 3CY\\
S8& $X_{L} \subset \TT(k,10)$ &&& 1 &  invariants depending by $k$ and $L$\\
    \bottomrule
 \hline
\end{tabular}
\captionof{table}{Fano of K3 type with invariants} \label{table}
\end{table}

\begin{table}[ht]
\centering
\begin{tabular}{@{} *9l @{}l @{}l @{}l @{}l @{}l @{}}    \toprule
no. & \emph{$X \subset Y$} & IHS $Z$& Comments\\ \midrule
M1 & $X_{(1,1,1)} \subset \PP^3 \times \PP^3 \times \PP^3$ & \cite[Section 4]{ilievmanivel2} & $Z \cong Hilb^2 S_3$ \\
M7 & $X_{(1,1)} \subset \Gr(2,6) \times \PP^5$& Prop.\ref{gp2}& $Z \cong Hilb^2 S_8$\\
S2& $X_1 \subset \OGr(3,8)$ & Prop.\ref{o2}&$Z \cong S_7$\\
S3& $X_1 \subset \SGr(3,9)$ &  \cite[Section 5]{ilievmanivel2}& $Z \cong Z_{DV}$\\
S4& $X_1 \subset \Ml (3,8)$ & Thm. \ref{hk} &$Z \cong Z_{DV}$\\
S6&$\TT(2,10)$ & Prop. \ref{t2}&$Z \cong Z_{DV}$\\
    \bottomrule
 \hline
\end{tabular}
\captionof{table}{Projective families of IHS linked to FK3} \label{table2}
    \end{table}

\subsection{What are we looking for?}
Many of the examples in the above table are obtained by chasing up the same numerology. Indeed from arguments similar to the one used in \cite{eg1} one can come up with a numerical criterion (cf. \cite{ilievmanivel} and \cite{kuzicy} for comparison and similar criteria). For a smooth projective variety we define the \emph{level} of $H^j(X, \C)$ as the largest difference $|p-q|$ for which $H^{p,q}(X) \neq 0$, with $p+q=j$.
It is obvious that lv$(H^j(X, \C)) \leq $ wt $(H^j(X, \C)) \leq \ddim X$. For a Fano variety by Kodaira vanishing the first inequality is always strict. For example, if $X$ is a Fano of dimension $n$, then $\mathrm{lv} (H^n(X,\C)) \leq \ddim X-2$. Moreover we say that a variety $X$ is \emph{central} if all of its $H^j$ have level zero, or equivalently if $h^{p,q}(X)=0$ for $p \neq q$.
\begin{criterion}\label{num} Let $Y$ be a smooth projective Fano variety of dimension $2t+1$ and index $\iota_Y$. 
Assume that $t$ divides $\iota_Y $ and that lv$(H^{2t+1}(Y))\leq 1$. Then a generic $ X \in | -\frac{1}{t} K_Y |$ is a Fano variety of K3 type, with the K3-type structure located in degree $2t$.
\end{criterion}
The above criterion is not necessary. Notable exceptions are \textbf{(S1)} (where the divisibility relation does not hold) and \textbf{(S6)}, where there the decomposition in irreducibles of the bundle that cuts the variety has no linear factor (albeit the variety has the correct ratio between dimension and index), and moreover two K3 sub-Hodge structures are present, in degree 6 and 8.\\
To the best of our knowledge the above numerology admits no counterexample. However the cohomological vanishing required to potentially prove the statement are ad-hoc, and there seems to us no easy way to transform the above statement in a proper theorem. However it is a cheap and easy way to produce several candidates, which turn out to be all of the desired type. We do not feel confident enough to state it as conjecture, as it stands. There could be ways of turning it into a statement or a conjecture. For example we could ask for $Y$ to have a rectangular Lefschetz decomposition in the categorical sense. Or, whenever $Y$ itself is cut by a section of an homogeneous vector bundle $\mathcal{F}= \bigoplus \mathcal{F}_i$ on $\Gr(k,n)$, we might ask that the slope $\mu (-\frac{1}{t} K_Y) > \mu(\mathcal{F}_i)$ for all $i$. However, for the purpose of the current paper, we prefer to leave it as is is, and we plan to formalise this statement in a future work.
\subsubsection{Some numerology (and how the list is created)}
The list of FK3 in the tables has no presumption of being complete. The main problem is the condition on the level of Hodge theory of the ambient variety, which is quite hard to control. The first case to investigate is the one of complete intersections in homogeneous varieties. We conjectured in \cite{eg1} that  there are no more FK3 as complete intersection in $\Gr(k,n)$ other than the well-known cubic fourfold, the Gushel-Mukai fourfold, the Debarre-Voisin twentyfold hypersurface and a codimension four linear section of Grassmannian $\Gr(2,8)$. We have not been able to prove this conjecture yet, but no counterexample has been found either.\\
We tried as well hypersurfaces in other homogeneous varieties other than $\Gr(k,n)$, for example using the list of Konno in \cite{konno2}, but none of them satisfied the above condition. For the complete intersections in homogeneous space, we do not have any reasonable conjecture. Atanas Iliev informed us that a FK3 variety can be obtained by taking a 6-codimensional linear section of the $E_6$ variety $\mathbb{O}\PP^2$, but we have not pursued this direction yet.\\
Already in this paper we analyse some extra case that do not fit in our numerological pattern. This is for example the case of $\TT(k,10)$ (and its linear section). However, since this the only reasonable systematic way to produce examples, we decided to write few lines to explain how the list was found and why it stops. To do this, we decided to use as key varieties $Y$ examples that automatically satisfied the Hodge theoretical condition in \ref{num}. 

Let $G$ be one of the varieties below. Consider the positive integer $m$ such that $\omega_G\cong \of_G(-m)$ and $D=\ddim G$. The equations in \ref{num} become
\begin{equation}\label{condition}2t+1=D \textrm{ and } at=m.
\end{equation}
\subsubsection*{$\Gr(k,k+l)$}
For the Grassmannian $\Gr(k,k+l)$ the dimension is $D=lk$ and the index equals $k+l$. First notice that $D$ must be odd.
The equations are $2t+1=kl$ and $at=k+l$, some $a$. Substituting we get $\frac{a(kl-1)}{2}=k+l$ and thus $akl=a+2k+2l$. Since $a\geq 1$ we have $kl \leq a+2k+2l$. It is easy to see that there are no solutions if $k\geq 5$, and for obvious reasons the case $k=2,4$ are excluded. In the case case $k=3$ substituting we get $l=\frac{a+6}{3a-2}$. This implies $a< 3$ for the previous number to be an integer. The case $a=2$ gives an even dimensional Grassmannian, so we discard it. The case $a=1$ corresponds to $G=\Gr(3,10)$. 
The associated FK3 is the Debarre-Voisin variety.
\subsubsection*{$\SGr(k,k+l)$}
The symplectic Grassmannian $\SGr(k,k+l)$ has dimension $kl- {k \choose 2}$ and index equal to $l+1$. If we substitute this in the equation above and look for solutions we find as triple $(k,l,a)=(2,3,2),(3,6,1), (5,3,2), (10,6,1)$. However, if $\omega$ is a non-degenerate skew symmetric $(k+l) \times (k+l)$ matrix, there are no $k$-dimensional isotropic subspaces if $k>l$ and $k+l$ even. We can therefore discard the last two triples and we are left with $X_2 \subset \SGr(2,5)$ (Gushel-Mukai fourfold) and $X_1 \subset \SGr(3,9)$, already considered in
 \cite{ilievmanivel2}.
 \subsubsection*{$\Ml(k,k+l)$}
 The bisiymplectic Grassmannian $\Ml(k,k+l)$ has dimension $kl-k(k+1)$ and index equal to $l-k+2$. If we substitute in the equation above and look for solutions we find as triple $(k,l,a)=(3,5,1), (5,5,1)$. The second one can be identified with a (multi)-linear section of $(\PP^1)^5$, see \cite{kuznetsovpicard}, the first one, an 8-fold linear section of $\Ml(3,8)$ is new.\\
 A similar computation can be done for the tri-symplectic Grassmannian $\mathrm{S}_3\Gr(k,k+l)$. This is relevant since two K3 by Mukai (genus 6 and genus 12) can be considered as (respectively) quadratic and linear section of it. However, no more examples have been found.
\subsubsection*{$\OGr(k,k+l)$}
The orthogonal Grassmannian $\OGr(k,k+l)$ has dimension $kl-{k+1 \choose 2}$ and index $l-1$ (with respect to the Pl\"ucker line bundle $\of_G(1)$, albeit non-irreducible in the Picard group). The only admissible triple is $(3,5,1)$. This is a linear section of the orthogonal Grassmannian $\OGr(3,8)$.
\subsubsection*{$Z_{\mQ^*(1)}$}
This variety is the zero locus of a general global section of the bundle $\mQ^*(1)$ on $\Gr(k, k+l)$. If $k=2$, it is $\TT(k,k+l)$. It has dimension $l(k-1)$ and index $k+1$. There are two admissible triples, $(2,7,1), (6,3,1)$. However the second one can be identified with $X_1 \subset \SGr(3,9)$. The first one $X_1 \subset \TT(2,9)$ is new. Notice that we find as well the generic K3 of genus 4 as $(2,3,3)$ since the zero locus of $Q^*(1)$ on $\Gr(2,5)$ is a quadric threefold. There are as well some FK3 obtained by $\TT(2,n)$. However, they do not fall in this pattern, and we will examine them separately.
\subsubsection*{\textit{Other varieties}}
We tried other bundles  to produce varieties of K3 type, such as $\mR^{\vee}(1)$ or the locus of $\Sym^2 \mR^{\vee} \oplus \W^2 \mR^{\vee}$ (the \emph{orthosymplectic Grassmannian}). Even without no guarantee on the weight of the Hodge structure, our attempt was motivated by some example in the list of K\"uchle, see \cite{kuznetsovpicard}. However, we found no more new example.
\subsubsection*{\textit{Products}}
Products of projective spaces do produce a handful more of examples. One can easily see that no more than 5 projectives can be involved, with the extremal case being $X_{(1^5)} \subset (\PP^1)^5$. Other examples are $X_{(1,1,1)} \subset (\PP^3)^3$, and $X_{(2,1,1)} \subset \PP^3 \times \PP^1 \times \PP^1$. In the products of Grassmannians when $k>1$, no further example is found. Indeed the index of a product of Grassmannians has index the gcd$(k_i+l_i)$. Substituting in the equations, one first find that no more than two Grassmannians can be used, and only one of them can have $k>1$. The possible cases are $X_{(1,1)} \subset \Gr(2,6) \times \PP^5$ and $X_{(2,1)} \subset \Gr(2,4) \times \PP^1$. Identical computations yield all the remaining cases.

\subsection{Geometric tools and tricks}
\subsubsection{A blow-up lemma}
We state here a blow up lemma. Although it merely descends from definitions, it is worth to recall it. It is worth to point out that a similar lemma is used in \cite{kuznetsovpicard}.
\begin{lemma}\label{lem:blowup} Let  $X=X_{(d, 1)} \subset Z \times \PP^1$. Then $X \cong Bl_S Z,$ where $S$ is the intersections of $2$ divisors of degree $d$ on $ Z$.
\end{lemma}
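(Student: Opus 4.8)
The plan is to realise $X$ explicitly as the closure of the graph of a pencil and then invoke the universal property of blowing up. Write $\of(d,1)$ for $\of_Z(d)\boxtimes\of_{\PP^1}(1)$, so that $H^0(Z\times\PP^1,\of(d,1))\cong H^0(Z,\of_Z(d))\otimes H^0(\PP^1,\of_{\PP^1}(1))$. Choosing coordinates $[t_0:t_1]$ on $\PP^1$, a general section is $\sigma = t_0 f_0 + t_1 f_1$ for general $f_0,f_1\in H^0(Z,\of_Z(d))$, and $X=\{\sigma=0\}$. First I would analyse the projection $\pi = p|_X \colon X\to Z$: over $z$ with $(f_0(z),f_1(z))\neq 0$ the linear equation $t_0 f_0(z)+t_1 f_1(z)=0$ has a unique root $[t_0:t_1]$, so $\pi$ is bijective there, while over the locus $S=\{f_0=f_1=0\}$ the whole fibre $\{z\}\times\PP^1$ lies in $X$. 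By Bertini, for general $f_0,f_1$ the divisors $D_i=\{f_i=0\}$ meet transversally, so $S=D_0\cap D_1$ is a smooth codimension-two complete intersection with $\mathcal{I}_S(d)$ generated by $f_0,f_1$ and $N_{S/Z}\cong\of_S(d)^{\oplus 2}$; the same genericity makes $X$ smooth and irreducible.

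Next I would produce the map to the blow-up. On $X$ the relation $t_0 f_0 + t_1 f_1 = 0$ holds, so in the chart $\{t_0\neq 0\}$ one has $f_0=-(t_1/t_0)f_1$, whence $\pi^{-1}\mathcal{I}_S\cdot\of_X$ is locally generated by the single non-zero-divisor $\pi^*f_1$ (and symmetrically by $\pi^* f_0$ on $\{t_1\neq 0\}$). Thus $\pi^{-1}\mathcal{I}_S\cdot\of_X$ is an invertible ideal sheaf, and the universal property of blowing up yields a unique $Z$-morphism $\psi\colon X\to Bl_S Z$.

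Finally I would check that $\psi$ is an isomorphism. Both $X$ and $Bl_S Z$ restrict over $Z\setminus S$ to $Z\setminus S$, so $\psi$ is a birational projective morphism of smooth varieties, and it remains only to control it over $S$. The exceptional divisor of $Bl_S Z$ is $\PP(N_{S/Z})\cong S\times\PP^1$ (using $N_{S/Z}\cong\of_S(d)^{\oplus2}$), matching $\pi^{-1}(S)=S\times\PP^1$. Over $s\in S$ the strict transforms of the pencil members identify the normal direction $[df_0:df_1]$ with $[-t_1:t_0]$ through the relation holding on $X$, so $\psi$ restricts to an isomorphism $\PP^1\to\PP(N_{S/Z,s})$ on each fibre. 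Hence $\psi$ is quasi-finite, therefore finite (being also proper), and a finite birational morphism onto the normal variety $Bl_S Z$ is an isomorphism by Zariski's Main Theorem.

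I expect this last step — verifying that $\psi$ does not contract the fibres $\pi^{-1}(s)\cong\PP^1$ over the centre, i.e. the explicit identification of $\psi$ on the exceptional locus — to be the only real content; everything before it is a direct unwinding of definitions, which is presumably why the lemma "merely descends from definitions". In a write-up one could instead quote the standard fact that the graph closure of the rational map defined by a base-point-free-after-blow-up pencil is the blow-up of its base scheme, and then reduce the base scheme to the smooth complete intersection $S$ as in the first paragraph.
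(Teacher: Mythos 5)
Your proof is correct and follows essentially the same route as the paper: identifying the section of $\of(d,1)$ with the pencil $t_0f_0+t_1f_1$ and recognising $X$ as the incidence variety (graph closure) of that pencil, which is the blow-up along its base locus $S=\{f_0=f_1=0\}$. The paper simply asserts that $y_0f_d+y_1g_d=0$ is the incidence equation of $Bl_SZ$, whereas you supply the verification (invertibility of $\pi^{-1}\mathcal{I}_S\cdot\of_X$, the universal property, and the check on exceptional fibres); this is a more careful write-up of the same argument, not a different one.
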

\begin{proof}
Let $\PP^1=\Proj(\C[y_0,y_1])$ and $V^{\vee} \cong \C[y_0,y_1]_1$. (that is, homogeneous forms of degree 1). Denote by $W^{\vee} \cong H^0(\of_{Z}(d)) $. $X$ is given by definition by a choice of $\lambda \in  W^{\vee} \otimes V^{\vee}$, or equivalently by a map (that we will still denote by $\lambda$) $\lambda: V \longrightarrow W^{\vee}.$
This map gives a 2-dimensional subspace of $ W^{\vee}$, or equivalently a pencil of divisors in $ Z$. The base locus of this pencil coincides with the $S$ defined in the lemma. The (only) incidence equation for the blow up of $Z$ in $S$ is $y_0f_d+y_1g_d$ and this is of course the same equation defining $X$. This proof admits an obvious generalisation when $\rho(Z) >1$.
\end{proof}
\subsubsection{Higher codimension case and Cayley trick(s)}
The above blow-up lemma admits a higher-codimensional generalisation. Indeed, when $X$ is the zero locus of a $(1,1)$ divisor in $U \times \PP^{r-1}$ (with the obvious generalisation if $\rho(U) >1$) then $X$ can be given either by an element of $ W^{\vee} \otimes V_r^{\vee}$ or as a map $$\lambda: V_r \longrightarrow W^{\vee}.$$ If $r>2$ we cannot identify $X$ with any birational modification of the pair $(U,S)$, where $S$ is the base locus of the above linear system. However $X$ and $S$ share a deep relation, known as the \emph{Cayley trick}. More precisely the result is the following
\begin{thm}[Thm. 2.10 in \cite{orlov}, Thm 2.4 in \cite{kimkim}] \label{cayley}  
Let $q:E \rightarrow U$ be a vector bundle of rank $r\ge 2$ over a smooth projective variety $U$
and let $S=s^{-1}(0)\subset U$ denote the zero locus of a regular section $s \in H^0(U,E)$ such that $ \dim S = \dim U - \mathrm{rank}\, E$. 
Let $X=w^{-1}(0) \subset \PP E^\vee$ be the zero locus of the section $w\in H^0(\PP E^\vee, \cO_{\PP E^\vee}(1))$  
determined by $s$ under the natural isomorphism
$H^0(U,E)\cong H^0(\PP E^\vee, \cO_{\PP E^\vee}(1))$. 
Then we have the semiorthogonal decomposition
$$ D^b(X)= \langle q^*D^b(U), \cdots, q^*D^b(U) \otimes_{\cO_X} {\cO_X}(r-2), D^b(S) \rangle .$$
\end{thm}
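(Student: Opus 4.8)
The plan is to view $X$ as a hyperplane-type divisor inside the ambient projective bundle $Y:=\PP E^\vee \xrightarrow{q} U$, import Orlov's semiorthogonal decomposition of $\Db(Y)$, and identify the residual block with $\Db(S)$. Write $i\colon X\hookrightarrow Y$ and $p:=q\circ i\colon X\to U$. Under the identification $H^0(Y,\cO_Y(1))=H^0(U,E)$ the section $w$ is the composite $\cO_Y\xrightarrow{q^*s}q^*E\twoheadrightarrow\cO_Y(1)$, so $X=\{w=0\}$ is a divisor with $\cO_Y(X)=\cO_Y(1)$ and there is a tautological sequence $0\to\cO_Y(-1)\xrightarrow{w}\cO_Y\to i_*\cO_X\to 0$. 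Geometrically $p$ is a $\PP^{r-2}$-bundle over $U\setminus S$ and acquires full $\PP^{r-1}$ fibres over $S$; a local computation with the equation $\sum_i s_ix_i=0$ shows that, whenever $S$ is smooth, $X$ is smooth as well, the only delicate points lying over $S$, where smoothness follows from the regularity of $s$. I would work under this assumption, reading $\Db$ as the bounded derived category of coherent sheaves in the general case.

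First I would recall Orlov's projective bundle theorem, which equips $\Db(Y)$ with the rectangular Lefschetz decomposition
$$\Db(Y)=\langle q^*\Db(U),\, q^*\Db(U)(1),\,\dots,\, q^*\Db(U)(r-1)\rangle$$
with respect to $\cO_Y(1)$. I would then show that the first $r-1$ blocks descend to admissible, mutually semiorthogonal subcategories $\mathcal D_k:=p^*\Db(U)(k)\subset\Db(X)$ for $k=0,\dots,r-2$. By adjunction and the projection formula this reduces to computing $Rp_*\cO_X(m)$ for $-(r-2)\le m\le 0$: tensoring the Koszul sequence by $\cO_Y(m)$ and applying $Rq_*$, one uses $Rq_*\cO_Y(j)=\Sym^j E$ for $j\ge 0$ and $Rq_*\cO_Y(j)=0$ for $-(r-1)\le j\le -1$. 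For $m=0$ this gives $Rp_*\cO_X=\cO_U$, hence full faithfulness of each $p^*(-)\otimes\cO_X(k)$; for $-(r-2)\le m\le -1$ both $\cO_Y(m)$ and $\cO_Y(m-1)$ lie in the vanishing range, so $Rp_*\cO_X(m)=0$ and $\Hom_X(p^*a(k),p^*b(l))=0$ for $k>l$. This establishes the semiorthogonal sequence $\langle\mathcal D_0,\dots,\mathcal D_{r-2}\rangle$.

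The crux is to identify the right orthogonal $\mathcal C:=\langle\mathcal D_0,\dots,\mathcal D_{r-2}\rangle^\perp$ with $\Db(S)$. A first diagram chase pins down $\mathcal C$: if $C$ is right-orthogonal to every $\mathcal D_k$, then adjunction gives $\Hom_Y(q^*b(k),i_*C)=0$ for $k=0,\dots,r-2$, so by the ambient decomposition $i_*C\cong q^*D(r-1)$ for some $D\in\Db(U)$; since $i_*C$ is supported on $X$, which contains no full fibre of $q$ away from $S$, the object $D$ must be supported on $S$. Thus $\mathcal C$ consists exactly of such objects, and the content of the theorem is the equivalence $\Db(S)\simeq\mathcal C$. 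I would realise it through the jumping locus $X_S:=p^{-1}(S)=\PP(E^\vee|_S)$, a $\PP^{r-1}$-bundle $\pi\colon X_S\to S$ with inclusion $\alpha\colon X_S\hookrightarrow X$ of codimension $r-1$, via the Fourier--Mukai functor $\Phi(-)=\alpha_*\big(\pi^*(-)\otimes\cO_{X_S}(c)\big)$ for the extremal twist $c$ (equal to $r-1$ in our normalisation), and then verify that $\Phi$ is fully faithful with image exactly $\mathcal C$.

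This last step is where I expect the main obstacle, and it is genuinely non-formal: the naive functor $\iota_*\colon\Db(S)\to\Db(U)$ along $\iota\colon S\hookrightarrow U$ is \emph{not} fully faithful, its self-extensions producing the exterior algebra of the normal bundle, so the equivalence $\Db(S)\simeq\mathcal C$ relies on the fibres of $\pi$ absorbing this discrepancy, exactly as the exceptional divisor does in Orlov's blow-up formula (the case $r=2$, where $X=Bl_S U$). Concretely, the verifications will hinge on base change along $X_S$, on the Koszul complex of the regular section $s$, whose homology detects $\cO_S$ and controls $Rp_*\cO_X(m)$ for $m$ near $r-1$ as well as the interaction of $\alpha_*$ with the twists, and on tracking the precise twist and homological shift so that $\mathrm{Im}\,\Phi$ both lands in $\mathcal C$ and generates it. Combined with the semiorthogonal sequence above, this yields the asserted decomposition. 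I note finally that the whole statement can be read as an instance of homological projective duality: $Y=\PP E^\vee$ with its rectangular Lefschetz structure is dual to a variety built from $U$, and $X$, being the hyperplane section of $Y$ cut by $s$, acquires $\Db(S)$ as its primitive component.
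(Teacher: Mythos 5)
The paper offers no proof of this statement: Theorem \ref{cayley} is imported verbatim from Orlov and from Kiem--Kim--Lee--Lee, so there is no internal argument to compare yours against. Your outline follows the geometric route of the second reference --- push Orlov's projective bundle decomposition of $Y=\PP E^\vee$ down to the divisor $X\in|\cO_Y(1)|$, then identify the residual block with $\Db(S)$ through the jumping locus $p^{-1}(S)=\PP(E^\vee|_S)$ --- rather than Orlov's original proof, which passes through categories of singularities of Landau--Ginzburg models. The first half of your sketch is correct and complete: the computation of $Rp_*\cO_X(m)$ from the Koszul sequence $0\to\cO_Y(m-1)\to\cO_Y(m)\to i_*\cO_X(m)\to 0$ together with $Rq_*\cO_Y(j)=\Sym^jE$ for $j\ge0$ and $Rq_*\cO_Y(j)=0$ for $-(r-1)\le j\le-1$ does give full faithfulness of $p^*(-)\otimes\cO_X(k)$ and the semiorthogonality of $\mathcal D_0,\dots,\mathcal D_{r-2}$.

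Two things stand between this and a proof. First, the ``diagram chase'' meant to pin down $\mathcal C$ is incorrect as written: in $\Db(Y)=\langle A_0,\dots,A_{r-1}\rangle$ with $A_j=q^*\Db(U)(j)$, the condition $\Hom(A_j,T)=0$ for $j=0,\dots,r-2$ characterises the right orthogonal of $\langle A_0,\dots,A_{r-2}\rangle$, which is the \emph{mutation} of $A_{r-1}$ through that subcategory, not $A_{r-1}$ itself --- already for $r=2$ one has $\Hom(q^*b,q^*D(1))=\Hom(b,D\otimes E)\neq0$ in general, so objects of $A_{r-1}$ do not even satisfy your hypothesis. Hence the conclusion $i_*C\cong q^*D(r-1)$ fails, and the support argument that follows does not apply verbatim. (There is also a mismatch of sides: in the decomposition as stated, with $\Db(S)$ last, the residual block is the \emph{left} orthogonal of $\langle\mathcal D_0,\dots,\mathcal D_{r-2}\rangle$; taking the right orthogonal instead produces the mutated decomposition with $\Db(S)$ first. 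The two are equivalent, but one should fix a convention.) This paragraph is in any case not needed and is best deleted. Second, and more seriously, you yourself identify the crux --- that $\Phi=\alpha_*(\pi^*(-)\otimes\cO_{X_S}(c))$ is fully faithful with image exactly the orthogonal complement, and that the fibres of $\pi$ must absorb the exterior algebra $\bigwedge^\bullet N_{S/U}^\vee$ of self-extensions of $\iota_*\cO_S$ --- but you do not carry out the computation of the monad $\Phi^!\Phi$ via the Koszul resolution of $\cO_{X_S}$ in $X$, nor the generation statement. As it stands the proposal is a correct architectural outline of the known geometric proof with its essential step deferred.
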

When this happens, we will write $D^b(S) \hookrightarrow D^b(X)$. There is as well an (older) analogue Hodge-theoretic statement, cf. Prop. 4.3 in \cite{konno}, stating that the vanishing cohomologies of $S$ and $X$ are isomorphic up to a shift. When the hypotheses of the above Theorem are verified, this therefore proves at once that $X$ is of K3-type.\\
The Cayley trick can be generalised in the following way, using the formalism of Homological projective duality.
\begin{proposition} \label{rennemo}  Let $Y_1$ and $Y_2$ be a pair of varieties with Lefschetz decompositions and embedded in $\PP(V)$. Let $Z_H$ be the intersection of $Y_1 \times Y_2$ with a general (1,1)-divisor $H$. Let $f_H$ be the map that $H$ naturally defines from $\PP(V)$ to $\PP(V^\vee)$. Let $X_H = Y_1 \cap f_H^{-1}(Y_2^\vee$), where $Y_2^\vee$ is the Homological Projective dual to $Y_2$. Then $D(X_H) \hookrightarrow D(Z_H)$.
\end{proposition}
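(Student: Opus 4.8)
The plan is to realise both $X_H$ and $Z_H$ as base changes, along the map $f_H$, of objects naturally attached to $Y_2$, and then to transport the semiorthogonal decomposition furnished by homological projective duality. For a generic choice of the $(1,1)$-divisor $H$, the induced linear map $V \to V^\vee$ is an isomorphism, so $f_H \colon \PP(V) \to \PP(V^\vee)$ is an isomorphism. Write $\mathcal{H}(Y_2) = \{(y_2, h) \in Y_2 \times \PP(V^\vee) : y_2 \in h\}$ for the universal hyperplane section of $Y_2$ and $p \colon \mathcal{H}(Y_2) \to \PP(V^\vee)$ for the projection. The defining equation $H(y_1, y_2) = 0$ of $Z_H$ reads $y_2 \in f_H(y_1)$, so first I would use this to identify $Z_H \cong Y_1 \times_{\PP(V^\vee)} \mathcal{H}(Y_2)$, the fibre product taken along $f_H \colon Y_1 \to \PP(V^\vee)$ and $p$. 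With the same conventions one has tautologically $X_H = Y_1 \cap f_H^{-1}(Y_2^\vee) = Y_1 \times_{\PP(V^\vee)} Y_2^\vee$. Thus $X_H$ and $Z_H$ are, respectively, the restrictions to $Y_1$ of $Y_2^\vee$ and of $\mathcal{H}(Y_2)$.

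Next I would invoke the fundamental theorem of homological projective duality for $Y_2$, which is where the Lefschetz decomposition of $Y_2$ is used: it produces a semiorthogonal decomposition of $D^b(\mathcal{H}(Y_2))$ whose primitive component is $D^b(Y_2^\vee)$, the remaining components being built from the Lefschetz pieces $\mathcal{A}_1, \ldots, \mathcal{A}_{i-1}$ of $Y_2$ pulled back and twisted by $p^* \mathcal{O}_{\PP(V^\vee)}(\cdot)$. In particular $D^b(Y_2^\vee)$ embeds as an admissible subcategory of $D^b(\mathcal{H}(Y_2))$.

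The heart of the argument is to show that this decomposition is compatible with base change along $f_H \colon Y_1 \to \PP(V^\vee)$. I would appeal to the base-change form of the HPD theorem: provided the fibre products are of the expected dimension, equivalently Tor-independent over $\PP(V^\vee)$, pulling the decomposition back to $Y_1$ yields a semiorthogonal decomposition of $D^b(Z_H)$ whose primitive factor is $D^b(Y_2^\vee \times_{\PP(V^\vee)} Y_1) = D^b(X_H)$, the complementary factors now being expressed through $D^b(Y_1)$, where the Lefschetz decomposition of $Y_1$ enters to control them, and the $\mathcal{A}_k$. The required Tor-independence and the smoothness of $X_H$ and $Z_H$ follow for generic $H$ from a Bertini-type dimension count, exactly as the hypothesis $\dim S = \dim U - \mathrm{rank}\, E$ guarantees the classical Cayley trick of Theorem \ref{cayley}, which is the special case in which $Y_2$, and hence $Y_2^\vee$, is a linear subspace. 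Reading off the primitive factor then gives the desired embedding $D^b(X_H) \hookrightarrow D^b(Z_H)$.

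The step I expect to be the main obstacle is precisely this base-change compatibility. Kuznetsov's HPD theorem is classically phrased for \emph{linear} sections, whereas here the section is taken along the nonlinear subvariety $f_H(Y_1) \subset \PP(V^\vee)$; one must therefore either use the general relative formulation of HPD or verify by hand that the projection functors restrict correctly. The genericity of $H$ is essential, both to make $f_H$ an isomorphism and to secure the Tor-independence that keeps the restricted pieces semiorthogonal and identifies the primitive piece with $D^b(X_H)$ rather than with a derived fibre product.
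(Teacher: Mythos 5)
Your proposal follows essentially the same route as the paper: identify $Z_H$ with the fibre product of $Y_1$ and the universal hyperplane section of $Y_2$ over $\PP(V^\vee)$, take Kuznetsov's semiorthogonal decomposition of that universal hyperplane section with $D^b(Y_2^\vee)$ as its primitive piece, and base change along $f_H\colon Y_1\to\PP(V^\vee)$ to read off $D^b(X_H)$ as the first factor of $D^b(Z_H)$. The base-change compatibility you flag as the main obstacle is exactly what the paper handles by citing Kuznetsov's general base-change theorem for semiorthogonal decompositions linear over a base, which applies here because the decomposition of the universal hyperplane section is $\PP(V^\vee)$-linear.
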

\begin{proof}
Let $D(Y_2)=\langle A_0,A_1(1),\dots A_{m}(m) \rangle$ be the given Lefschetz decomposition of $Y_2$.
The divisor $H$ parametrizes, for every point of $Y_1$, an hyperplane section of $Y_2$, hence it defines a map $f_H\,:Y_1\,\rightarrow\,\PP(V^\vee)$. In this way, $Z_H$ is identified with the pullback through $f_H$ of the universal hyperplane section $\mathcal{Y}_2\subset Y_2\times \PP(V^\vee)$. Now, by \cite[Lemma 3.3]{kuzi_sod2} we have
$$D(\mathcal{Y}_2)=\langle D(Y_2^\vee), A_1(1)\boxtimes D(\PP(V^\vee)),\dots, A_m(m)\boxtimes D(\PP(V^\vee)) \rangle.$$
By applying base change \cite[Thm 5.6]{kuzi_sod} to the diagram 
$$\xymatrix{ Z_H  \ar[d]^\iota \ar[r]
& \mathcal{Y}_2\ar[d]^{\pi_2} \\ Y_1\ar[r]^{f_H} & \PP(V^\vee), } $$
we obtain:
$$D(Z_H)=\langle D(Y^\vee_2 \times_{\PP(V^\vee)} Y_1), A_1(1)\boxtimes D(Y_1),\dots, A_m(m)\boxtimes D(Y_1)\rangle.$$
And the variety in the first factor here is precisely $X_H=Y_2^\vee \times_{\PP(V^\vee)} Y_1=Y_1 \cap f_H^{-1} (Y_2)^\vee.$
\end{proof}

\section{Case-by-case analysis}

\subsection{Identifications}
Before analysing in details the examples in our list, we want to eliminate some varieties that are well-known examples in disguise. We recall some results of Kuznetsov, that we conveniently bundle together. Recall that the variety $\Ml(k,n)$ is the \emph{bisymplectic Grassmannian}. It can be thought either as the intersection of two symplectic Grassmannian $\SGr(k,n)$ inside $\Gr(k,n)$ or as the zero locus over $\Gr(k,n)$ of a general global section of the bundle $\W^2 \mR^{\vee} \oplus \W^2 \mR^{\vee}$. We will better describe this variety later in the paper.
\begin{thm}[Thm 3.1 and Cor. 3.5 in \cite{kuznetsovpicard}]\label{kuzzolo}The following hold:
\begin{itemize} 
\item There is an isomorphism  $\Ml(n,2n) \cong \prod (\PP^1)^n$;
\item The variety $X_{(1,1,1,1,1)} \subset \prod (\PP^1)^5$ is isomorphic to $W=Bl_S(\prod (\PP^1)^4)$, where $S=S_{(1,1,1,1)^2}$ is a non-generic K3 surface of genus $g=13$, given as the intersection of two divisors of multidegree $(1,1,1,1)$.
\end{itemize}
\end{thm}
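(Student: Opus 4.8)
The plan is to treat the two assertions separately: the first by a direct analysis of bi-Lagrangian subspaces, the second by invoking the blow-up Lemma~\ref{lem:blowup}. For $\Ml(n,2n)\cong(\PP^1)^n$, I would realise $\Ml(n,2n)$ as the variety of subspaces $L\subset V=\C^{2n}$ that are Lagrangian for two general symplectic forms $\omega_1,\omega_2$ (this is exactly the vanishing of the section of $\W^2\mR^\vee\oplus\W^2\mR^\vee$ for $k=n$). Assuming $\omega_2$ nondegenerate, set $A=\omega_2^{-1}\omega_1\in\End(V)$, characterised by $\omega_1(v,w)=\omega_2(Av,w)$; skew-symmetry of $\omega_1$ then gives $\omega_2(Av,w)=\omega_2(v,Aw)$, so $A$ is self-adjoint for $\omega_2$. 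The key observation is that $L$ is bi-Lagrangian if and only if it is $\omega_2$-Lagrangian and $A$-invariant: for $v\in L$ and all $w\in L$ one has $\omega_2(Av,w)=\omega_1(v,w)=0$, so $Av\in L^{\perp_{\omega_2}}=L$.

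I would then feed in the generic normal form. Self-adjointness forces every eigenspace of $A$ to be $\omega_2$-nondegenerate, hence even-dimensional; for a general pair $(\omega_1,\omega_2)$ the operator $A$ is diagonalisable with $n$ distinct eigenvalues and two-dimensional, mutually $\omega_2$-orthogonal eigenspaces $K_1,\dots,K_n$, each a symplectic plane with $V=\bigoplus_i K_i$. An $A$-invariant $L$ then splits as $L=\bigoplus_i(L\cap K_i)$; $\omega_2$-isotropy gives $\dim(L\cap K_i)\le1$, and $\dim L=n$ forces each $L\cap K_i$ to be a line. Conversely every tuple $(\ell_1,\dots,\ell_n)$ with $\ell_i\subset K_i$ a line yields a bi-Lagrangian $\bigoplus_i\ell_i$. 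This produces a bijection $\Ml(n,2n)\to\prod_i\PP(K_i)\cong(\PP^1)^n$, which I would upgrade to an isomorphism of varieties by exhibiting it as a morphism (the $\ell_i$ vary algebraically in the $K_i$) with an algebraic inverse.

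For the second statement I would apply Lemma~\ref{lem:blowup} directly. Writing the defining multilinear form as $y_0f+y_1g$, where $[y_0:y_1]$ are coordinates on the fifth factor and $f,g$ have multidegree $(1,1,1,1)$ on $Z=(\PP^1)^4$, the lemma (with $r=2$) identifies $X_{(1,1,1,1,1)}$ with $Bl_S Z$, where $S=\{f=g=0\}$ is the intersection of two divisors of multidegree $(1,1,1,1)$. It then remains to check that $S$ is a K3 of genus $13$: adjunction gives $K_S=\big(\of(-2,-2,-2,-2)+\of(2,2,2,2)\big)\big|_S=\of_S$, the vanishing $h^1(\of_S)=0$ follows from the Koszul resolution of $\of_S$ on $(\PP^1)^4$, and the polarisation $H=\of_S(1,1,1,1)$ satisfies $H^2=(h_1+\cdots+h_4)^4=4!=24=2g-2$, so $g=13$. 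Non-genericity is then clear, since $S$ carries the four independent classes $\of_S(1,0,0,0),\dots,\of_S(0,0,0,1)$, forcing $\rho(S)\ge4$.

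The main obstacle I expect is the genericity input in the first part: justifying that a general pencil of alternating forms is \emph{regular}, so that $A$ is diagonalisable with all eigenspaces exactly two-dimensional and spanning $V$ (this is the Kronecker classification of pencils of skew forms), and then verifying that the resulting bijection is a scheme-theoretic isomorphism rather than merely a bijection on closed points. By contrast the second part is essentially formal once Lemma~\ref{lem:blowup} is in hand, the only genuine computation being the cohomology vanishing $h^1(\of_S)=0$.
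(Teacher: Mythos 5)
This statement is imported verbatim from Kuznetsov (Thm.\ 3.1 and Cor.\ 3.5 of \cite{kuznetsovpicard}); the paper gives no proof of its own, so there is nothing internal to compare against. That said, your argument is correct and is essentially the standard one. For the first item, the reduction of ``bi-Lagrangian'' to ``$\omega_2$-Lagrangian and $A$-invariant'' via the self-adjoint operator $A=\omega_2^{-1}\omega_1$, followed by the eigenspace splitting $V=\bigoplus K_i$ into $n$ mutually $\omega_2$-orthogonal symplectic planes, is exactly the canonical-form picture the paper itself recalls in the subsection on $\Ml(3,8)$ (there the $K_i$ appear as kernels of the $n$ degenerate members of the pencil, which for a regular pencil is the same decomposition as your eigenspace one). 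The two genuine loose ends you flag are real but routine: genericity of the pencil gives $n$ simple roots of the Pfaffian binary form, hence $A$ diagonalisable with all eigenspaces of dimension exactly $2$; and the bijection is promoted to an isomorphism most cleanly by writing the inverse $(\ell_1,\dots,\ell_n)\mapsto\bigoplus_i\ell_i$ as a morphism $\prod_i\PP(K_i)\to\Gr(n,2n)$ landing in the (smooth, for a general section) zero locus $\Ml(n,2n)$. For the second item, applying Lemma \ref{lem:blowup} with $Z=(\PP^1)^4$ is precisely what that lemma was set up for (its proof explicitly notes the generalisation to $\rho(Z)>1$, and that a similar lemma appears in \cite{kuznetsovpicard}); your adjunction, Koszul, and degree computations are all correct, with $H^4=(h_1+\cdots+h_4)^4=24=2g-2$ giving $g=13$, and the four restricted classes $h_i|_S$ having nondegenerate intersection matrix $2(J-I)$, so $\rho(S)\ge 4$ and $S$ is indeed non-generic. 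No gaps beyond the ones you already identified and correctly assessed as standard.
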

Some of the Fano of K3 type that we found in our search can be actually identified with the $W$ above. For this reason they are not included in our main table. More precisely we have
\begin{lemma} Let $W$ the Fano of K3 type in \cite{kuznetsovpicard} defined above. Then the following Fano of K3 type \begin{itemize}
\item $X_{(1,1,1,1,1)} \subset Q_2 \times Q_2 \times \PP^1$;
\item $X_{(1,1,1,1,1)} \subset \Ml(4,8) \times \PP^1;$
\item $X_{(1,1,1,1,1)} \subset \Ml(3,6) \times \Ml(2,4);$
\end{itemize}  
are isomorphic to $W$.
\end{lemma}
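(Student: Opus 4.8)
The plan is to reduce all three varieties to the case $X_{(1,1,1,1,1)} \subset (\PP^1)^5$ already settled in the second item of Theorem \ref{kuzzolo}, by producing an isomorphism of each ambient space with $(\PP^1)^5$ that carries the multidegree $(1,1,1,1,1)$ to $\of(1,1,1,1,1)$.

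First I would rewrite each ambient product as a product of five copies of $\PP^1$. The two-dimensional quadric is the Segre image $Q_2\cong \PP^1\times\PP^1$, so $Q_2\times Q_2\times\PP^1\cong(\PP^1)^5$. The smooth bisymplectic Grassmannians split as products of lines by the first item of Theorem \ref{kuzzolo}: $\Ml(4,8)\cong(\PP^1)^4$, $\Ml(3,6)\cong(\PP^1)^3$ and $\Ml(2,4)\cong(\PP^1)^2$, so that $\Ml(4,8)\times\PP^1\cong(\PP^1)^5$ and $\Ml(3,6)\times\Ml(2,4)\cong(\PP^1)^5$. In each case the Picard rank of the ambient variety is $5$, which matches the five entries of the multidegree.

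The crux is to check that these isomorphisms send the relevant polarisation to the balanced class $\of(1,\dots,1)$. For the quadric this is immediate, since the Segre embedding identifies $\of_{Q_2}(1)$ with $\of(1,1)$. For the bisymplectic Grassmannian I would unwind the isomorphism of Theorem \ref{kuzzolo}: fixing two generic skew forms $\omega_1,\omega_2$ on $V=\C^{2n}$, the pencil they span decomposes $V=\bigoplus_{i=1}^n W_i$ into mutually orthogonal two-dimensional blocks, and a subspace isotropic for both forms is precisely one of the shape $\ell_1\oplus\cdots\oplus\ell_n$ with each $\ell_i\subset W_i$ a line. Hence $\Ml(n,2n)\cong\prod_i\PP(W_i)$, and under this identification the Plücker map $\ell_1\oplus\cdots\oplus\ell_n\mapsto[\ell_1\wedge\cdots\wedge\ell_n]$ is the Segre embedding, so $\of_{\Ml(n,2n)}(1)$ restricts to $\of(1,\dots,1)$. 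Therefore in all three cases the class $(1,1,1,1,1)$ corresponds to $\of(1,1,1,1,1)$ on $(\PP^1)^5$.

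To conclude, since the ambient isomorphisms identify the line bundles they also identify the spaces of global sections, so a general $(1,1,1,1,1)$-divisor corresponds to a general one on $(\PP^1)^5$ and the two zero loci are isomorphic; the latter is $W$ by Theorem \ref{kuzzolo}. The only real work is the polarisation matching for the bisymplectic Grassmannians; once the Plücker bundle is seen to be balanced, everything else is bookkeeping of Picard groups and the three isomorphisms drop out at once.
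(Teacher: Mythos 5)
Your proposal is correct and follows essentially the same route as the paper, which simply invokes $Q_2\cong\PP^1\times\PP^1$ and Kuznetsov's isomorphism $\Ml(n,2n)\cong(\PP^1)^n$ from Theorem \ref{kuzzolo}. The only difference is that you explicitly verify that the Pl\"ucker polarisation on $\Ml(n,2n)$ becomes the balanced class $\of(1,\dots,1)$ under this identification — a detail the paper leaves implicit but which is indeed needed for the multidegrees to match, and your Segre-embedding argument for it is correct.
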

\begin{proof}
The first case is obvious, since $Q_2 \cong \PP^1 \times \PP^1$. For the other two cases, by definition and Kuznetsov's result $\Ml(n,2n)$ coincides with $\prod (\PP^1)^n$. 
\end{proof}
There is one more identification between two numerological candidates.
\begin{lemma} $X_{(1,1,1)} \subset \mathbb{S}_3 \times \PP^1 \times \PP^1 \cong X_{(2,1,1)} \subset \PP^3 \times \PP^1 \times \PP^1$.
\end{lemma}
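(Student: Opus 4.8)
The statement to prove is the isomorphism
\[
X_{(1,1,1)} \subset \mathbb{S}_3 \times \PP^1 \times \PP^1 \;\cong\; X_{(2,1,1)} \subset \PP^3 \times \PP^1 \times \PP^1.
\]
The plan is to identify the spinor variety $\mathbb{S}_3$ explicitly and to show that the two families of Fano varieties are cut by the same equations once this identification is made. First I would recall that $\mathbb{S}_3$, one of the two connected components of $\OGr(3,6)$ in its spinor embedding, is isomorphic to $\PP^3$. This is the low-dimensional exceptional isomorphism $\mathrm{Spin}(6)\cong \SL(4)$: the half-spinor representation of $\mathrm{Spin}(6)$ is the standard $4$-dimensional representation of $\SL(4)$, and under this the spinor embedding of $\mathbb{S}_3$ becomes the identity embedding $\PP^3 \hookrightarrow \PP^3$. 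The key point I would stress is that the \emph{Plücker/spinor} line bundle $\of_{\mathbb{S}_3}(1)$ does \emph{not} correspond to $\of_{\PP^3}(1)$ but to $\of_{\PP^3}(2)$: the natural embedding via $\W^2\mR^\vee$ (the Plücker embedding of $\OGr(3,6)$) restricts on the spinor component to the square of the generator of the Picard group.

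With this identification in hand, the next step is purely bookkeeping about line bundles. A $(1,1,1)$-divisor on $\mathbb{S}_3 \times \PP^1 \times \PP^1$ is a section of
\[
\of_{\mathbb{S}_3}(1) \boxtimes \of_{\PP^1}(1) \boxtimes \of_{\PP^1}(1).
\]
Under $\mathbb{S}_3 \cong \PP^3$ with $\of_{\mathbb{S}_3}(1) \cong \of_{\PP^3}(2)$, this becomes a section of
\[
\of_{\PP^3}(2) \boxtimes \of_{\PP^1}(1) \boxtimes \of_{\PP^1}(1),
\]
which is exactly a $(2,1,1)$-divisor on $\PP^3 \times \PP^1 \times \PP^1$. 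Since both $X$'s are defined as the zero locus of a general global section of the respective line bundle, and the isomorphism of ambient varieties carries one linear system isomorphically onto the other, a general member of one family is identified with a general member of the other. This gives the claimed isomorphism $X_{(1,1,1)} \cong X_{(2,1,1)}$ compatibly with the embeddings.

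The only genuinely substantive step is the identification $\mathbb{S}_3 \cong \PP^3$ together with the correct computation of the spinor line bundle as $\of_{\PP^3}(2)$; everything downstream is a formal comparison of complete linear systems under an isomorphism of ambient spaces. I would therefore organise the argument as: (i) invoke the exceptional isomorphism $\mathrm{Spin}(6)\cong\SL(4)$ to get $\mathbb{S}_3\cong\PP^3$; (ii) verify $\of_{\mathbb{S}_3}(1)\cong\of_{\PP^3}(2)$ by comparing the spinor embedding with the Veronese-type embedding, e.g.\ by matching $H^0$ dimensions (both spaces of sections have dimension $4$ for the generator versus $10$ for its square, consistent with $\mathbb{S}_3\hookrightarrow \PP^3$ via $\of(2)$ being a projection from $\PP(\W^2\C^4)=\PP^5$); and (iii) match the two linear systems as above. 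The main obstacle is purely expository rather than mathematical: one must be careful that the spinor normalisation $\of_{\mathbb{S}_3}(1)$ really is the square of the $\PP^3$ hyperplane class and not the hyperplane class itself, since getting this wrong would produce a $(1,1,1)$ rather than a $(2,1,1)$ divisor on the $\PP^3$ side.
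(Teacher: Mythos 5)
Your argument is correct and is essentially the paper's own proof: the paper likewise invokes the well-known identification $\mathbb{S}_3\cong\PP^3$ and observes that the line bundle giving the spinor embedding is the square root of the Pl\"ucker one, so the Pl\"ucker-normalised degree $(1,1,1)$ becomes $(2,1,1)$ on $\PP^3\times\PP^1\times\PP^1$. (Two small slips in your asides, neither of which affects the argument: the Pl\"ucker embedding of $\Gr(3,6)$ is via $\det\mR^\vee=\W^3\mR^\vee$, not $\W^2\mR^\vee$, and the image of $\PP^3$ under $|\of(2)|$ sits in $\PP(\Sym^2\C^4)\cong\PP^9$, not $\PP(\W^2\C^4)=\PP^5$.)
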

\begin{proof} It follows from the well known identification $ \mathbb{S}_3 \cong \PP^3$, see for example \cite{kuznetsovs}. The difference in the degree is explained since the line bundle giving the spinor embedding for $\mathbb{S}_3 $ is the square root of the Pl\"ucker one.
\end{proof}

\subsection{Blow-up and Mukai type}
To prove that each of the variety of type M and B are of K3 type one can use the Cayley trick statement, as in Theorem \ref{cayley}. Indeed the (stronger) derived category statement implies the Hodge theoretical one. Indeed this can be seen by writing down such a semiorthogonal decomposition as prescribed by \ref{cayley} and then taking Hochshild homology. Alternatively one can use Riemann-Roch and standard exact sequences to compute the relevant Hodge numbers. We did these calculations as sanity checks for all our examples, however we believe it is neither worth nor interesting to list all of them, since they are quite similar. Therefore we will include just one example, namely Proposition \ref{gq1}, where Theorem \ref{cayley} does not apply in a straightforward way. For the families B1 and B2, Lemma \ref{lem:blowup} settles the matter. \\
In terms of construction of polarised families of IHS, we investigate another construction of the Hilbert scheme of points on a genus 8 K3, see Proposition \ref{gp2}. We believe that each of the examples in our list of Fano could lead to similar constructions: this would be especially interesting, considering the lack of examples of polarised families of Hilbert schemes of points on K3 surfaces.

\subsection{M3: a (different) computation in intersection theory} The variety M3  is $X_{(1,1)} \subset \Gr(2,5) \times Q_5$. It has dimension 10 and index 4. It is neither a blow up with a center in a K3 surface, nor we can apply the Cayley trick. However we can show that it is a Fano of K3 type using Proposition \ref{rennemo}. Indeed we have
\begin{lemma} Let $S_6$ be a K3 surface of genus 6 in the Mukai model and $X$ our M3 as defined in the table. Then $D^b(S_6) \hookrightarrow D^b(X)$.
\end{lemma}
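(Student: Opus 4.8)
The plan is to realise $X$ as a family of hyperplane sections of $\Gr(2,5)$ and then apply Proposition~\ref{rennemo} with $Y_2=\Gr(2,5)$ playing the role of the variety whose homological projective dual we exploit and $Y_1=Q_5$. First I would set $V=\W^2\C^5$, so that $\Gr(2,5)\subset\PP(V)=\PP^9$ in its Pl\"ucker embedding, and write $Q_5\subset\PP(W)=\PP^6$ with $W=\C^7$. The defining $(1,1)$-form of $X$ is an element $H\in W^\vee\otimes V^\vee=H^0(Q_5,\cO(1))\otimes H^0(\Gr(2,5),\cO(1))$; for general $H$ the induced map $W\to V^\vee$ is injective and exhibits $f_H$ as a linear embedding of $\PP(W)=\PP^6$ into $\PP(V^\vee)=\PP^9$. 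With this notation $Z_H=X$, and the variety produced by Proposition~\ref{rennemo} is $X_H=Q_5\cap f_H^{-1}\big(\Gr(2,5)^\vee\big)$. (Note that the proof of Proposition~\ref{rennemo} only needs $H$ to induce a morphism $f_H\colon Y_1\to\PP(V^\vee)$, and not that $Y_1$ and $Y_2$ literally share an ambient projective space, so the fact that $Q_5$ lives in $\PP(W)$ rather than $\PP(V)$ is harmless.)

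The key input is that $\Gr(2,5)$ is homologically projectively self-dual: its HP dual is the projectively dual Grassmannian $\Gr(2,5)^\vee\cong\Gr(2,5)\subset\PP(V^\vee)$, which is \emph{smooth}, carrying the rectangular Lefschetz decomposition $D^b(\Gr(2,5))=\langle \mathcal B,\mathcal B(1),\dots,\mathcal B(4)\rangle$ with $\mathcal B=\langle \cO,\mR^\vee\rangle$ (Kuznetsov's homological projective duality for Grassmannians of lines). Feeding this into Proposition~\ref{rennemo} yields a semiorthogonal decomposition of $D^b(X)$ whose first component is $D^b(X_H)$, the remaining factors being the shifted blocks $\mathcal B_i(i)\boxtimes D^b(Q_5)$; in particular $D^b(X_H)\hookrightarrow D^b(X)$.

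It then remains to identify $X_H$. Since $f_H$ realises $Q_5$ as the quadric $Q$ sitting inside a general linear $\PP^6\subset\PP(V^\vee)$, we obtain $X_H=\Gr(2,5)^\vee\cap\PP^6\cap Q$. The threefold $\Gr(2,5)^\vee\cap\PP^6$ is a codimension-three linear section of a six-dimensional, index-$5$ Grassmannian of degree $5$, hence the quintic del Pezzo threefold $V_5$ (degree $5$, index $2$); cutting it with the quadric $Q$ gives, by adjunction, a surface with trivial canonical class, simply connected by the Lefschetz hyperplane theorem, with polarisation $H^2=5\cdot 2=10$, i.e.\ $2g-2=10$. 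Thus $X_H$ is exactly the Mukai model of a genus-$6$ K3 surface $S_6$, and $D^b(S_6)\hookrightarrow D^b(X)$.

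The hard part will be the careful justification that the homological projective duality machinery genuinely applies in this form. Concretely, I would need to invoke the correct \emph{smooth} version of the HP self-duality of $\Gr(2,5)$, and to check that the base change of the universal hyperplane section along the \emph{quadric} $Q_5\hookrightarrow\PP(V^\vee)$ (rather than along a linear subspace) is admissible, so that for general $H$ the fibre product $\Gr(2,5)^\vee\times_{\PP(V^\vee)}Q_5$ is dimensionally transverse and yields a genuinely smooth K3 surface of the expected genus. This is exactly the mechanism already exploited in the proof of Proposition~\ref{rennemo}, so the argument reduces to verifying these genericity hypotheses for a general member of the family $X$.
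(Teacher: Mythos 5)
Your proposal is correct and follows essentially the same route as the paper: both apply Proposition~\ref{rennemo} using the homological projective self-duality of $\Gr(2,5)$ and identify the resulting $X_H$ as the genus-$6$ Mukai K3, namely $\Gr(2,5)\cap\PP^6\cap Q$. Your write-up is in fact more careful than the paper's (which is a one-paragraph appeal to Proposition~\ref{rennemo}) about the roles of $Y_1$, $Y_2$, the map $f_H$, and the degree/genus count, and the remaining genericity checks you flag are likewise left implicit in the paper.
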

\begin{proof} It suffices to apply Proposition \ref{rennemo}, since the Grassmannian $\Gr(2,5)$ (or even a quadric hypersurface) is projectively self-dual. The intersection of the Grassmannian $\Gr(2,5)$ with a 5-dimensional quadric (or, equivalently, the intersection of $\Gr(2,5)$ with a quadric and 3 hyperplanes in its Pl\"ucker embedding) is a K3 of genus 6 and degree 10 by Mukai's classification. To conclude one needs to argue that the orthogonal complement to the derived category of $D^b(S_6)$ in $D^b(X)$ is generated by an exceptional collection, and then taking Hochshild homology (which is additive on semi-orthogonal decompositions), together with the Hochshild-Konstant-Rosenberg isomorphism cf. \cite[Theorem 7.5, 8.3]{kuzhkr}.
\end{proof} 

As an alternative methods we can show that M3 is of K3 type using a lengthy (but rather standard) play with long exact sequences and cohomological vanishings.
\begin{proposition}\label{gq1}Let $X=X_{(1,1)} \subset \Gr(2,5) \times Q_5$. Then $X$ is of K3 type.
\end{proposition}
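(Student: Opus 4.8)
The plan is to compute the relevant Hodge numbers of $X$ directly and to check that the Hodge structure on $H^{10}(X,\C)$ is of K3 type, i.e.\ that $h^{6,4}(X)=1$ while $h^{7,3}(X)=h^{8,2}(X)=h^{9,1}(X)=h^{10,0}(X)=0$; the remaining entries then follow by Hodge symmetry. First I would invoke the Lefschetz hyperplane theorem: $X$ is a smooth ample divisor in $Y=\Gr(2,5)\times Q_5$, cut out by a section of $\cO_Y(1,1)$, so $H^k(X)\cong H^k(Y)$ for $k<10=\ddim X$ and the only genuinely new cohomology sits in the middle degree. Crucially $Y$ is \emph{central}: the cohomology of $\Gr(2,5)$ is of Tate type, and the odd quadric $Q_5$ has $H^{2i}=\C$ for $0\le i\le 5$ and vanishing odd cohomology, so by K\"unneth every class in $H^\ast(Y)$ has type $(p,p)$. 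Hence the off-diagonal part of $H^{10}(X,\C)$ equals the vanishing cohomology $H^{10}_{\van}(X)$, and the task reduces to computing $h^{p,10-p}(X)$ for $p\ge 6$.

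To access these numbers I would use the two short exact sequences attached to $X\hookrightarrow Y$. The Koszul (restriction) sequence reads
\[
0\to \Omega^p_Y(-1,-1)\to \Omega^p_Y\to \Omega^p_Y|_X\to 0,
\]
and, since the conormal bundle of $X$ is the line bundle $\cO_X(-1,-1)$, the $p$-th wedge of the conormal sequence gives
\[
0\to \Omega^{p-1}_X(-1,-1)\to \Omega^p_Y|_X\to \Omega^p_X\to 0.
\]
Chasing the long exact sequences expresses each $H^q(X,\Omega^p_X)$ in terms of $H^\ast(Y,\Omega^p_Y)$, $H^\ast(Y,\Omega^p_Y(-1,-1))$ and the twisted groups $H^\ast(X,\Omega^{p-1}_X(-1,-1))$; tensoring both displayed sequences by $\cO_X(-1,-1)$ handles the last terms in the same way, so the whole computation bootstraps down to cohomology on the ambient $Y$.

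The ambient cohomology is then computed by K\"unneth together with the Borel--Bott--Weil theorem on $\Gr(2,5)$ and the analogous Bott calculus for $\Omega^p_{Q_5}(j)$ on the quadric. Centrality forces the untwisted groups $H^\ast(Y,\Omega^p_Y)$ onto the diagonal, and this, combined with Kodaira--Nakano vanishing for the negative twists $\Omega^p_Y(-1,-1)$, should kill all candidate contributions to $h^{p,10-p}(X)$ for $p\ge 7$. The genuinely delicate point is to pin down $h^{6,4}(X)=1$: one must show that exactly one class survives, arising from a single Bott summand in an ambient twisted group (the ``K3 form''), and that the connecting homomorphisms do not annihilate it. I expect this to be the main obstacle — the higher-level vanishings are robust, but isolating the \emph{unique} surviving class among the several terms feeding the middle-degree computation requires tracking the Bott weights and the connecting maps precisely, rather than merely counting dimensions through Euler characteristics. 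As a sanity check one can compute the relevant $\chi(\Omega^p_X)$ independently, but the individual-group bookkeeping is what actually certifies the K3-type shape of $H^{10}(X,\C)$.
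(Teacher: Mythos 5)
Your proposal is correct and follows essentially the same route as the paper: the paper's proof likewise combines the restriction sequence and the wedge of the conormal sequence, twisted by powers of $\cO_X(-1,-1)$, with K\"unneth and the standard vanishings for twisted forms on $\Gr(2,5)$ and $Q_5$ to kill $h^{p,10-p}$ off the K3 positions. The one step you flag as delicate is handled cleanly there: iterating the twisted sequences reduces $h^{4,6}(X)$ to $H^{10}(X,(\mL_X^{\vee})^{\otimes 4})=H^{10}(X,\omega_X)\cong H^0(\of_X)\cong\C$, so no tracking of individual Bott weights or connecting maps is needed.
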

The proof of the above proposition can be split in two lemma. The first one is a Chern class computation, the second one is essentially an application of Bott's theorem.
\begin{lemma} \label{erchar}The topological Euler characteristic of $X$ is $e(X)=72$.
\end{lemma}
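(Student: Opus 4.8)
The plan is to compute the topological Euler characteristic of $X = X_{(1,1)} \subset \Gr(2,5) \times Q_5$ by viewing $X$ as the zero locus of a general global section of the line bundle $\mL = \of(1,1) = \of_{\Gr(2,5)}(1) \boxtimes \of_{Q_5}(1)$ on the ten-dimensional ambient variety $Y = \Gr(2,5) \times Q_5$. Since $X$ is a smooth divisor cut out by a section of $\mL$, its normal bundle is $\mL|_X$, and I can extract $e(X)$ from the total Chern class of $Y$ together with the class of $\mL$ via the standard adjunction/inclusion-exclusion for a smooth hypersurface.

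\medskip

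Concretely, first I would recall the two Chern-class inputs. For $\Gr(2,5)$ the total Chern class of the tangent bundle is $c(T_{\Gr(2,5)}) = c(\mR^\vee \otimes \mQ)$, and $e(\Gr(2,5)) = \binom{5}{2} = 10$; for the five-dimensional quadric $Q_5 \subset \PP^6$ one has $e(Q_5) = 6$ (an odd-dimensional quadric has the Betti numbers of projective space). Then I would use the short exact sequence
\begin{equation}
0 \to T_X \to T_Y|_X \to \mL|_X \to 0,
\end{equation}
which gives $c(T_X) = c(T_Y|_X)/c(\mL|_X)$. Taking the degree-ten component and integrating over $X$, and using that $X$ represents the class $c_1(\mL)$ in $Y$, yields the top-dimensional formula
\begin{equation}
e(X) = \int_X c_9(T_X) = \int_Y c_1(\mL) \cdot \left[ c(T_Y) \cdot \frac{1}{1 + c_1(\mL)} \right]_{9},
\end{equation}
where the subscript $9$ denotes the degree-nine part. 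This reduces the whole computation to intersection numbers on $Y = \Gr(2,5) \times Q_5$, which factor through the two Künneth factors since the tangent bundle of a product splits.

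\medskip

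The mechanical heart of the calculation is therefore to expand the degree-nine part of $c(T_Y)/(1+c_1(\mL))$, multiply by $c_1(\mL) = h_1 + h_2$ (where $h_1$ and $h_2$ are the respective hyperplane/Pl\"ucker classes pulled back from the two factors), and evaluate the resulting monomials against the fundamental class of $Y$. Here I would use the Schubert calculus of $\Gr(2,5)$ (with the relations coming from the Chern classes of $\mR$ and $\mQ$, and the top intersection normalised so that $\int_{\Gr(2,5)} \sigma_{3,3} = 1$) for the $\Gr(2,5)$-factor, and the elementary fact $\int_{Q_5} h_2^5 = 2$ for the quadric factor. Matching total degree forces each monomial to have degree $6$ in $h_1$-type classes and degree $5$ in $h_2$, so only finitely many terms contribute and the bookkeeping is finite though tedious.

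\medskip

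The main obstacle is purely computational rather than conceptual: correctly carrying the Chern roots of $T_{\Gr(2,5)} = \mR^\vee \otimes \mQ$ through the expansion and keeping track of the mixed terms in $(h_1 + h_2)$ without arithmetic slips, since the expansion of $1/(1+c_1(\mL))$ mixes the two factors at every order. I expect this is most safely done by the Macaulay2 Schubert-calculus code mentioned in the acknowledgements, and the answer should come out to $e(X) = 72$, consistent with the expected Hodge numbers of a Fano tenfold of K3 type carrying a weight-ten K3 Hodge structure in its middle cohomology.
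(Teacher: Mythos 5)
Your approach is essentially the paper's: both use the normal sequence $0 \to T_X \to T_Y|_X \to \mL|_X \to 0$ to express $c(T_X)$ in terms of $c(T_Y)$ and $c_1(\mL)$, then evaluate the top Chern class by intersection theory on the product, with the Schubert relations of $\Gr(2,5)$ (the paper uses $2\beta_1^5 = 5\beta_1\beta_2^2$) and $\int_{Q_5} h_2^5 = 2$ doing the final bookkeeping; the paper likewise spares the reader the expansion and arrives at $\frac{6}{5}e(Y) = 72$. However, you have an off-by-one in the dimensions that propagates through your formulas: $\dim \Gr(2,5) = 6$ and $\dim Q_5 = 5$, so $Y$ is \emph{eleven}-dimensional and $X$ is ten-dimensional. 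Hence you need $e(X) = \int_X c_{10}(T_X) = \int_Y c_1(\mL)\cdot\bigl[c(T_Y)/(1+c_1(\mL))\bigr]_{10}$, extracting the degree-\emph{ten} part, not degree nine; as written your integrand has total degree $10$ on an $11$-fold and would evaluate to zero (indeed your own later remark that monomials must have bidegree $(6,5)$ is inconsistent with it). With that correction the computation goes through exactly as in the paper.
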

\begin{proof}This is a lengthy (but direct) exercise in intersection theory, and we will spare the details to the reader. Let us denote $Y=\Gr(2,5) \times Q_5$. Denote by $\alpha_1=c_1(\of_Q(-1))$ and $\beta_1=c_1(\of_G(-1))$. Denote by $\beta_2=c_2(\mR)$. One has $H^4(\Gr(2,5), \Z) = \langle \beta_1^2, \beta_2 \rangle$. One easily compute $c(Q)$, $c(G)$ and $c(Y)=c(G)c(Q)$. In particular by Gauss-Bonnet $c_{11}(Y)=-6 \alpha_1^5 \beta_1^6$ and $$e(Y)= \int_{Y} -6 \alpha_1^5 \beta_1^6=60.$$
We then use the normal sequence associated to $X$ $$ 0 \to T_X \to TY|_X \to \of_X(1,1) \to 0.$$
This implies $c(TY|_X)=c(X)(1-\alpha_1-\beta_1)$. We can compute recursively the Chern classes of $X$, with in particular $$c_{10}(X)=(9\alpha_1^5\beta_1\beta_2^2+9\alpha_1^4\beta_1^2\beta_2^2)|_X.$$ To compute the restriction we evaluate against the class of $X$, and we have $ c_{10}(X) \cdot X=18\alpha_1^5\beta_1^2\beta_2^2.$
Using the relation in $A(G)$ given by $2\beta_1^5=5\beta_1\beta_2^2$ we get $$c_{10}(X) \cdot X= \frac{2 \cdot 18}{5}\alpha_1^5 \beta_1^6=\frac{6}{5}e(Y)=72.$$
\end{proof}

\begin{lemma} For $0 \leq i \leq 3$  we have $h^{i, 10-i}(X)=0$. Moreover $h^{6,4}(X)=h^{4,6}(X)=1$.
\end{lemma}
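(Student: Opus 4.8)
The plan is to compute the Hodge numbers $h^{i,10-i}(X)$ of the middle cohomology $H^{10}(X)$ for the tenfold $X = X_{(1,1)}\subset \Gr(2,5)\times Q_5$, showing they vanish for $0\le i\le 3$ (and symmetrically for $i\ge 7$) while $h^{6,4}=h^{4,6}=1$. Since the previous lemma established $e(X)=72$, and since everything away from the middle degree is controlled by the Lefschetz hyperplane theorem, once I pin down the off-diagonal entries of $H^{10}$ the Euler characteristic will force the diagonal piece $h^{5,5}$, confirming the full Hodge diamond and in particular the K3-type structure sitting in $H^{10}$ with a single $(6,4)$ class.

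\textbf{Step 1: reduce to the primitive/vanishing middle cohomology.} First I would invoke the Lefschetz hyperplane theorem for the ample divisor $\of(1,1)$ on $Y = \Gr(2,5)\times Q_5$: the restriction $H^j(Y,\C)\to H^j(X,\C)$ is an isomorphism for $j<10 = \dim X$ and injective for $j=10$. Because $Y$ is a product of a Grassmannian and a quadric, both of which have only Hodge--Tate cohomology (all classes of type $(p,p)$), every $H^j(X)$ with $j\ne 10$ is purely $(p,p)$, and the image of $H^{10}(Y)$ in $H^{10}(X)$ is likewise purely $(5,5)$. Hence the only possible off-diagonal Hodge numbers of $X$ live in the \emph{vanishing} (equivalently primitive) part $H^{10}_{\van}(X)$, and it suffices to analyze this piece.

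\textbf{Step 2: compute the relevant $h^{i,10-i}$ via Bott's theorem on the conormal sequence.} To access $H^{i,10-i}(X) = H^{10-i}(X,\Omega^i_X)$ for $i\le 3$ I would use the conormal exact sequence $0\to \of_X(-1,-1)\to \Omega^1_Y|_X\to \Omega^1_X\to 0$ together with its wedge-power (Koszul) filtrations to express $\Omega^i_X$ in terms of restrictions of $\Omega^p_Y$ twisted by powers of $\of(-1,-1)$. Restricting further via the Koszul resolution $0\to\of_Y(-1,-1)\to\of_Y\to\of_X\to 0$ reduces each cohomology group $H^{10-i}(X,\Omega^i_X)$ to a (finite) collection of cohomology groups $H^{\bullet}(Y,\Omega^p_Y(a,a))$ on the homogeneous ambient space. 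Since $\Omega^p_Y$ splits over the product as a sum $\bigoplus_{r+s=p}\Omega^r_{\Gr(2,5)}\boxtimes\Omega^s_{Q_5}$, and each factor is a homogeneous (irreducible or completely reducible) bundle, Borel--Bott--Weil (Bott's theorem) computes each such group explicitly. The claim $h^{i,10-i}(X)=0$ for $0\le i\le 3$ amounts to checking that all the relevant weights are singular (so the cohomology vanishes), and the claim $h^{6,4}=h^{4,6}=1$ amounts to isolating the single surviving class; by Serre duality and the Hodge symmetry $h^{p,q}=h^{q,p}$ it is enough to treat $i\le 5$.

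\textbf{Step 3: assemble the diamond and close via Euler characteristic.} Having shown $h^{i,10-i}=0$ for $i\le 3$ (hence also $i\ge 7$ by symmetry), having $h^{5,5}$ contributed by $H^{10}(Y)$ plus a possible vanishing-cohomology $(5,5)$ part, and having the single $(6,4)$ and $(4,6)$ classes, I would compare the sum of all Hodge numbers to $e(X)=72$ from the preceding lemma to verify consistency and to nail down $h^{5,5}(X)$. \emph{The main obstacle} I anticipate is the bookkeeping in Step 2: the Koszul/conormal filtrations produce many terms $H^{\bullet}(Y,\Omega^p_Y(a,a))$, and one must verify that every off-diagonal contribution genuinely cancels or vanishes rather than merely hoping it does --- the twists $(a,a)$ range over several values and the quadric factor $Q_5$ has the nonstandard feature that $\Omega^\bullet_{Q_5}$ is not a direct sum of line bundles, so the Bott-vanishing checks on the $Q_5$ side (and the possible appearance of the primitive middle class of $Q_5$) are where the argument really has to be pushed carefully. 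I would organize this by treating the $\Gr(2,5)$ and $Q_5$ Künneth factors separately and tabulating which bidegrees $(r,s)$ with $r+s=i$ can contribute, expecting that the single nonzero answer in the range $i\le 5$ comes precisely from the primitive $(2,2)$-type class of $Q_5$ paired appropriately, giving the lone $h^{6,4}=1$.
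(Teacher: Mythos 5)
Your proposal follows essentially the same route as the paper: the wedge powers of the conormal sequence combined with the Koszul restriction sequence, the K\"unneth decomposition of $\Omega^p_Y$ over the product, the known Bott-type vanishings for twisted holomorphic forms on $\Gr(2,5)$ and $Q_5$, and the Euler characteristic from the preceding lemma to pin down $h^{5,5}$. One small correction to your closing expectation: the lone class in $h^{4,6}$ does not come from a primitive middle class of $Q_5$ (an odd-dimensional quadric has none); in the chain of reductions it arises as $H^6(\Omega^4_X)\cong H^{10}(X,\mL_X^{-4})=H^{10}(X,\omega_X)\cong\C$, i.e.\ from Serre duality, since $X$ has index $4$.
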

\begin{proof} As before let us denote $Y=\Gr(2,5) \times Q_5$, and with $\mL \cong \of_Y(1,1)$ (and its restriction to $X$ as well). We use the following two exact sequences \begin{equation} \label{seq1}
0 \to \Omega_X^{k-1} \otimes \mL^{\vee} \to \Omega^k_{Y|X} \to \Omega^k_X \to 0 \end{equation} and 
\begin{equation}\label{seq2} 0 \to \Omega^k_Y \otimes \mL^{\vee} \to \Omega^k_Y \to \Omega^k_{Y|X}\to 0,
\end{equation}
possibly twisting for some positive multiple of $\mL^{\vee}$ when required. The computation is rather lengthy and technical, and we will skip most of the details. To find similar computations the reader can refer to \cite{eg1}. For the results on the cohomological vanishings for both $\Gr(2,5)$ and $Q_5$ one can consult for example \cite{peternell}, \cite{snow}.\\
The first vanishing $h^{0,10}(X)$ is obvious. Let us show the first non-obvious one, that is $h^{1,9}(X)=0$. Consider the two sequences \ref{seq1} and \ref{seq2} above with $k=1$. Using the K\" unneth formula one easily see that the cohomology of $\Gr(2,5) \times Q_5$ is of Lefschetz-type. Moreover from Kodaira vanishing and since $H^{10}(X, \mL) \cong H^{0}(X, \of_X(-3,-3))=0$ one reduces to
$$ 0 \to  H^9(\Omega^1_{Y|X}) \to H^9 (\Omega^1_X) \to 0 $$ and $$ 0 \to H^9(\Omega^1_{Y|X}) \to H^{10}(\Omega^1_Y \otimes \mL^{\vee}) \to 0.$$
However, if we denote with $\pi_1$ (resp. $\pi_2$) the projection on $\Gr(2,5)$ (resp. $Q_5$) we have $\Omega^1_Y \cong \pi_1^* \Omega^1_{\Gr(2,5)} \oplus \pi_2^* \Omega^1_Q$, and from the K\"unneth formula for the box product and the well known vanishings for the twisted cohomologies of $\Gr(2,5)$ and $Q_5$ we have $$H^{10}(\Omega^1_Y \otimes \mL^{\vee})\cong H^9(\Omega^1_{Y|X}) \cong  H^9 (\Omega^1_X)=0.$$
For $h^{2,8}(X)$ we use the sequences \ref{seq1} and \ref{seq2} with $k=2$ and $k=1$ twisted by $\mL^{\vee}$. Indeed one has from \ref{seq1}
$$ 0 \to H^8(\Omega^2_{Y|_X}) \to H^8(\Omega^2_X) \to H^9(\Omega^1_X)  \to H^8(\Omega^2_{Y|_X}) \to 0.$$ The two external terms can be checked to be 0 using \ref{seq2}, again together with the K\"unneth formula and the usual vanishings (using the decomposition for $\Omega^2_Y$). Using the twisted version of \ref{seq1} and \ref{seq2} we reduce to the isomorphism $H^8(\Omega^2_X) \cong H^{10}((\mL_X^{\vee})^{\otimes 2})=0$. The same argument works as well for $h^{3,7}(X)=0$, where for $h^{4,6}(X)$ we get $$H^6(\Omega^4_X) \cong H^{10}((\mL_X^{\vee})^{\otimes 4})\cong H^0(\of_X) \cong \C.$$
\end{proof}

The last Lemma is enough to prove that $X$ is of K3 type. In particular, when combined with Lemma \ref{erchar} we explicitely compute all the Hodge numbers. The following corollary is in fact proved bundling the two results above, together with Lefschetz theorem on hyperplane section and a direct application of the K\"unneth formula.
\begin{corollary} Suppose $p+q \neq 10$. The only non-zero Hodge numbers $h^{p,q}$ of $X$ are $$h^{0,0}=h^{10,10}=1, \ h^{1,1}=h^{9,9}=2, \ h^{2,2}=h^{8,8}=4, \ h^{3,3}=h^{7,7}=6, \ h^{4,4}=h^{6,6}=8.$$
For $p+q=n$ the only non-zero Hodge numbers are $$h^{6,4}=h^{4,6}=1, \ h^{5,5}=28,$$ with moreover the dimension of the vanishing cohomology subspace $h^{5,5}_{\van}=19$.
\end{corollary}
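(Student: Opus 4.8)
The plan is to reduce everything outside the middle degree to the cohomology of the ambient $Y=\Gr(2,5)\times Q_5$, and to pin down the middle degree using the two preceding lemmas. First I would invoke the Lefschetz hyperplane theorem: since $X$ is a smooth member of the ample linear system $|\of_Y(1,1)|$ inside the $11$-dimensional $Y$, restriction gives Hodge-compatible isomorphisms $H^k(X,\C)\cong H^k(Y,\C)$ for all $k\le 9$, and Poincar\'e duality on the smooth $10$-fold $X$ then transports this to all $k\ge 11$ via $h^{p,q}(X)=h^{10-p,10-q}(X)$. Hence every Hodge number with $p+q\ne 10$ is computed from the cohomology of $Y$.

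Next I would compute $H^\bullet(Y)$ by K\"unneth. Both $\Gr(2,5)$ and $Q_5$ have cohomology concentrated in even degree and purely of type $(p,p)$ (the Grassmannian by its Schubert cell decomposition, the odd quadric by its standard cell structure), with even Betti numbers $(1,1,2,2,2,1,1)$ and $(1,1,1,1,1,1)$ respectively. Convolving these gives $h^{p,p}(Y)=1,2,4,6,8,9$ for $p=0,\dots,5$, with all mixed $(p,q)$, $p\ne q$, vanishing. Feeding this through the Lefschetz/Poincar\'e reduction yields exactly the claimed outer numbers $h^{0,0}=h^{10,10}=1$, $h^{1,1}=h^{9,9}=2$, $h^{2,2}=h^{8,8}=4$, $h^{3,3}=h^{7,7}=6$, $h^{4,4}=h^{6,6}=8$, and forces $h^{p,q}(X)=0$ for $p\ne q$ whenever $p+q\ne 10$. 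In particular $X$ carries no odd cohomology.

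For the middle degree $p+q=10$ I would combine the previous Lemma, which gives $h^{i,10-i}(X)=0$ for $0\le i\le 3$ (hence also for $7\le i\le 10$ by Hodge symmetry) together with $h^{6,4}=h^{4,6}=1$, with the Euler characteristic $e(X)=72$ of Lemma \ref{erchar}. Since all cohomology sits in even total degree, $e(X)$ is simply the sum of all Betti numbers; subtracting the known contributions $1+2+4+6+8+8+6+4+2+1=42$ from the degrees $\ne 10$ and the contribution $1+1=2$ of $h^{6,4}+h^{4,6}$ leaves $h^{5,5}=72-44=28$.

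Finally, for the vanishing cohomology I would use that $j^*\colon H^{10}(Y)\to H^{10}(X)$ is injective, which follows from hard Lefschetz on $Y$: the composite $j_*j^*$ is cup product with the ample class $[X]$, and $L\colon H^{10}(Y)\to H^{12}(Y)$ is an isomorphism since $10=\ddim Y-1$. Thus $H^{10}(X)$ splits Hodge-theoretically as $j^*H^{10}(Y)\oplus H^{10}_{\van}(X)$, and as $H^{10}(Y)=H^{5,5}(Y)$ has dimension $9$ this gives $h^{5,5}_{\van}(X)=28-9=19$, with the mixed classes $h^{6,4}=h^{4,6}=1$ necessarily lying in the vanishing part since they cannot come from the Hodge--Tate $H^{10}(Y)$. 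The one point deserving care is this last splitting: I would verify that the Lefschetz decomposition is compatible with the Hodge structures and that the intersection form stays nondegenerate on $j^*H^{10}(Y)$, so that the vanishing subspace is genuinely its orthogonal complement of dimension $28-9$.
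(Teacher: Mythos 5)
Your proposal is correct and follows essentially the same route as the paper, which proves the corollary precisely by "bundling" the Euler characteristic lemma and the middle-degree vanishing lemma with the Lefschetz hyperplane theorem and the K\"unneth formula for $\Gr(2,5)\times Q_5$. Your write-up merely makes explicit the bookkeeping (the convolution of Betti numbers giving $h^{5,5}(Y)=9$, hence $h^{5,5}_{\van}=28-9=19$) that the paper leaves to the reader.
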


\subsection{M7: another construction of $S_8^{[2]}$}
The 12-fold $X_{M7}$ is given by the zero locus of a (1,1) section on $\Gr(2,6) \times \PP^5$. Let $S_8=\Gr(2,6) \cap H_1 \cap \ldots \cap H_6$. Then $S_8$ is a general K3 surface of genus 8 in Mukai's model. From the Cayley trick argument one has that $D^b(S_8) \hookrightarrow D^b(X_{M7})$. On the Hodge-theoretical level indeed we have:
\begin{lemma}\label{m7} Let $X_{M7}$ as above. Then  $X_{M7}$ is of K3 type with $h^{6,6}=31$ and the vanishing subspace $h_{\van}^{6,6}=19$.
\end{lemma}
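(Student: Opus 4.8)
The plan is to read off the K3 structure from the Cayley trick and then compute the middle Hodge numbers by separating the ambient (Hodge--Tate) contribution from the vanishing one.

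First I would set up the Cayley trick in the form of Theorem \ref{cayley}. Over $U=\Gr(2,6)$ take the rank $6$ bundle $E=\of_U(1)^{\oplus 6}$, so that $\PP E^\vee = \Gr(2,6)\times\PP^5$ and the section $w\in H^0(\cO_{\PP E^\vee}(1,1))$ cutting out $X_{M7}$ corresponds, under $H^0(U,E)\cong H^0(\PP E^\vee,\cO(1,1))$, to the six hyperplanes cutting $S_8=\Gr(2,6)\cap H_1\cap\dots\cap H_6$, a genus $8$ K3 in the Mukai model. Theorem \ref{cayley} yields $D^b(S_8)\hookrightarrow D^b(X_{M7})$, and the accompanying Hodge-theoretic statement of Konno (Prop.~4.3 in \cite{konno}) identifies $H^\bullet_{\van}(X_{M7})$ with $H^\bullet_{\van}(S_8)$ up to a Tate twist by $(12-2)/2=5$. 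Since $H^2_{\van}(S_8)=H^2_{\prim}(S_8)$ carries Hodge numbers $(h^{2,0},h^{1,1}_{\prim},h^{0,2})=(1,19,1)$, the twist by $5$ places one class in bidegree $(7,5)$ and $19$ classes in bidegree $(6,6)$ of $H^{12}(X_{M7})$. This already proves that $X_{M7}$ is of K3 type and gives $h^{6,6}_{\van}=19$.

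Next I would account for the non-vanishing part of $h^{6,6}$. The line bundle $\of_Y(1,1)$ is very ample (Plücker times Segre), so $X_{M7}$ is a smooth hyperplane section of $Y=\Gr(2,6)\times\PP^5$, and the Lefschetz hyperplane theorem provides an injection $H^{12}(Y)\hookrightarrow H^{12}(X_{M7})$ whose image is a pure sub-Hodge structure complementary to $H^{12}_{\van}(X_{M7})$, i.e. $H^{12}(X_{M7})\cong H^{12}(Y)|_{X_{M7}}\oplus H^{12}_{\van}(X_{M7})$. Because $Y$ has Hodge--Tate cohomology, the image lies entirely in bidegree $(6,6)$, whence $h^{6,6}(X_{M7})=\dim H^{12}(Y)+19$. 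By Künneth and Schubert calculus the Betti numbers $b_{2i}(\Gr(2,6))$ for $i=0,\dots,8$ are $1,1,2,2,3,2,2,1,1$; summing $b_{2(6-j)}(\Gr(2,6))$ over $0\le j\le 5$ gives $\dim H^{12}(Y)=2+2+3+2+2+1=12$. Therefore $h^{6,6}(X_{M7})=12+19=31$, as claimed.

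The only genuinely delicate point is the compatibility of the two splittings: one must check that the Cayley-trick vanishing cohomology supplied by Konno's statement coincides with the Lefschetz vanishing cohomology, so that the $12$ Hodge--Tate ambient classes and the $19$ K3-type classes are each counted exactly once and there is no overlap or double counting in bidegree $(6,6)$. Everything else is routine Künneth and Schubert-class bookkeeping, which I would carry out only as a sanity check (for instance comparing against the topological Euler characteristic, in the spirit of Lemma \ref{erchar} for M3).
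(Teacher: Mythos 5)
Your argument is correct, but it is not the route the paper actually takes for this lemma. The paper's proof is a one-line reduction: since the ambient $Y=\Gr(2,6)\times\PP^5$ is central, Lefschetz and Serre duality determine every Hodge number of $X_{M7}$ outside the middle row, and within the middle row the only unknowns $h^{5,7}$ and $h^{6,6}$ are pinned down by computing $\chi(\Omega^5)$ and $\chi(\Omega^6)$ via Riemann--Roch (in practice, Macaulay2). You instead run the Cayley trick honestly: $X_{M7}=w^{-1}(0)\subset\PP E^\vee$ for $E=\of_{\Gr(2,6)}(1)^{\oplus 6}$, Konno's Proposition 4.3 identifies $H^{12}_{\van}(X_{M7})$ with $H^2_{\van}(S_8)(-5)$, giving the $(1,19,1)$ block directly, and the Hodge--Tate complement is $\dim H^{12}(Y)=12$ by K\"unneth and the Betti numbers of $\Gr(2,6)$, so $h^{6,6}=31$. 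Both routes are endorsed in the paper (the surrounding text explicitly offers the Cayley-trick/Konno argument as the systematic method for all Mukai-type examples, with Riemann--Roch as the "alternative"), but the displayed proof of Lemma \ref{m7} is the Euler-characteristic computation. Your version is more structural and explains \emph{why} the numbers are $19$ and $12$ rather than just verifying them; the paper's version is shorter and requires no appeal to Konno. Your final worry about compatibility of the two splittings is harmless: the vanishing cohomology in Konno's statement and in the Lefschetz decomposition $H^{12}(X)=\im H^{12}(Y)\oplus H^{12}_{\van}(X)$ are the same object by definition (the kernel of the Gysin map to $H^{14}(Y)$), so there is no double counting to rule out.
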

\begin{proof} Since  $\Gr(2,6) \times \PP^5$ is a central variety, it is enough to compute the Euler characteristics $\chi(\Omega^i)$ for $i=5,6$. This can be done for example via Riemann-Roch or using Macaulay2.
\end{proof}
As expected, we can associate to $X_{M7}$ an IHS, which is linked to the genus 8 K3. To do this, let $Z$ be given by the zero locus of a general global section of the bundle $\W^2 \mR^{\vee}_{4,6} \otimes \mR^{\vee}_{2,6}$ on $\Gr(4,6) \times \Gr(2,6)$. We have the following proposition.
\begin{proposition} $Z$ is an IHS fourfold.
\end{proposition}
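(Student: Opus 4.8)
The plan is to realize $Z$ as a smooth fourfold with trivial canonical bundle and then to pin down its place in the Beauville--Bogomolov classification. Write $\E := \W^2\mR^{\vee}_{4,6}\boxtimes\mR^{\vee}_{2,6}$ for the rank-$12$ bundle of the statement, living on $Y := \Gr(4,6)\times\Gr(2,6)$, which has dimension $16$. Since $\E$ is globally generated, the zero locus $Z$ of a general section is smooth of the expected dimension $16-12=4$ by Bertini. A Chern-class computation gives $\det\E=\of_Y(6,6)$: one has $\det\W^2\mR^{\vee}_{4,6}=(\det\mR^{\vee}_{4,6})^{3}=\of_{\Gr(4,6)}(3)$ and $\det\mR^{\vee}_{2,6}=\of_{\Gr(2,6)}(1)$, with the ranks $6$ and $2$ of the two factors entering as the relevant exponents. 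As $K_Y=\of_Y(-6,-6)$, adjunction then yields $K_Z=(K_Y\otimes\det\E)|_Z=\of_Z$, so $Z$ has trivial canonical class.

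The decisive point is a dichotomy coming from the Beauville--Bogomolov decomposition. A smooth projective fourfold with trivial canonical bundle is K\"ahler and carries a Ricci-flat metric (Yau), so a finite \'etale cover of $Z$ splits as a product of complex tori, strict Calabi--Yau manifolds and irreducible holomorphic symplectic manifolds. If $Z$ is moreover simply connected this cover is trivial, so $Z$ is itself such a product; it then has no torus factor, a strict Calabi--Yau fourfold contributes $h^{2,0}=0$, and any product involving K3 surfaces contributes $h^{2,0}\ge 2$. Hence, once we also know $h^{2,0}(Z)=1$, the only surviving possibility is that $Z$ is a single IHS fourfold. The proof therefore reduces to the two assertions $\pi_1(Z)=0$ and $h^{2,0}(Z)=1$ (with $h^{1,0}(Z)=0$ then automatic from $b_1(Z)=0$).

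To compute $h^{2,0}(Z)$ I would resolve $\of_Z$ by the Koszul complex $0\to\W^{12}\E^{\vee}\to\cdots\to\W^2\E^{\vee}\to\E^{\vee}\to\of_Y\to\of_Z\to 0$ of the section, tensor it by $\Omega^2_Y$, and evaluate the resulting hypercohomology spectral sequence. Splitting $\Omega^2_Y$ over the two factors by the K\"unneth formula and decomposing each $\W^k\E^{\vee}\otimes\Omega^{\bullet}$ into irreducible homogeneous bundles, every contributing term has its cohomology computed by Bott's theorem on $\Gr(4,6)$ and $\Gr(2,6)$ separately. The expected output is $h^{2,0}(Z)=1$ (and indeed the whole Hodge diamond of $S_8^{[2]}$), exactly parallel to the calculation for $X_{M7}$ in Lemma \ref{m7}; in practice this is cleanest to run on Macaulay2.

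The genuinely delicate step is simple connectivity, and I expect it, rather than the mechanical Hodge computation, to be the main obstacle. The bundle $\E$ is globally generated but \emph{not} ample: restricted to a line in the $\Gr(2,6)$ factor the summand $\mR^{\vee}_{2,6}$ splits off a trivial quotient, so Sommese's Lefschetz theorem for ample vector bundles does not apply verbatim. I would either invoke its refinement for $k$-ample globally generated bundles, checking that the morphism to a Grassmannian defined by $\E$ has small enough fibres to force $\pi_1(Z)\cong\pi_1(Y)=0$, or, more cheaply, establish the explicit isomorphism $Z\cong S_8^{[2]}$ directly: a general length-two subscheme of the genus-$8$ K3 surface $S_8=\Gr(2,6)\cap H_1\cap\cdots\cap H_6$ spans a $4$-dimensional subspace $U_4\subset V_6$, and this assignment, suitably made precise, realizes $Z$ as the Hilbert square of $S_8$. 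Since the Hilbert square of a K3 surface is a well-known simply connected IHS fourfold, this last route settles both simple connectivity and the IHS property at once, and is the one I would ultimately pursue.
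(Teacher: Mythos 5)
Your overall strategy (adjunction to get $K_Z\cong\of_Z$, then Beauville--Bogomolov) matches the paper's, and your Chern class computation $c_1(\E)=(6,6)$ agrees with the one given there. But the decisive step is genuinely different, and the difference matters. The paper computes a single number, $\chi(\of_Z)=\frac{c_2^2-c_4}{720}=3$ (via Riemann--Roch/Macaulay2), and feeds it into the decomposition theorem. This is stronger than it may look: $\chi(\of)$ is multiplicative under finite \'etale covers, and the only values achievable by a fourfold product of tori, strict Calabi--Yaus and IHS factors are $0$ (torus factor), $2$ (strict CY fourfold), $3$ (IHS fourfold) and $4$ (K3$\times$K3); since $3d\notin\{0,2,4\}$ for $d\ge 2$, the value $3$ simultaneously rules out every nontrivial cover and every non-IHS product. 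No separate simple-connectivity argument is needed.

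Your route instead requires $\pi_1(Z)=0$ together with $h^{2,0}(Z)=1$, and you correctly identify $\pi_1(Z)=0$ as the delicate point: $\E$ is globally generated but not ample, so the Sommese-type Lefschetz theorem does not apply directly, and as written your proof has a gap there. Your proposed escape --- proving $Z\cong S_8^{[2]}$ directly --- would indeed close it, but that identification is the content of the paper's \emph{next} result (Proposition \ref{gp2}), proved there by a separate argument with the morphism $h\colon V_6\to\W^2V_6^{\vee}$; folding it into the present proposition inverts the paper's logical order and makes the present statement redundant. If you want a self-contained proof of this proposition alone, replace your $(\pi_1,h^{2,0})$ computation by the single Euler characteristic $\chi(\of_Z)=3$: it is both cheaper and gap-free.
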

\begin{proof}Recall the formula for the first Chern class  of a product $c_1(\W^2 \mR^{\vee}_{4,6} \otimes \mR^{\vee}_{2,6})= \mathrm{rk}(\mR^{\vee}_{2,6})\cdot c_1(\W^2 \mR^{\vee}_{4,6} )+\mathrm{rk}(\W^2 \mR^{\vee}_{4,6})\cdot c_1( \mR^{\vee}_{2,6}).$ By adjunction it follows that for a general section $Z$ is a smooth fourfold with $c_1=0$. We compute now its holomorphic Euler characteristic $\chi(\of_Z)$.
This can be done for example via a Riemann-Roch computation, since $$\chi(\of_Z) = \frac{c_2^2-c_4}{720}.$$ We will use a Macaulay2 code in order to speed up the calculation.
\begin{verbatim}
loadPackage "Schubert2"
k1=2, l1=4, k2=4, l2=2;
G26=flagBundle({k1,l1}, VariableNames=>{r1,q1});
(R1,Q1)=G26.Bundles;
V=abstractSheaf(G26, Rank=>6);
G46=flagBundle({k2,l2}, V, VariableNames=>{r2,q2});
(R2,Q2)=G46.Bundles;
p=G46.StructureMap;
R1G46=p^*(dual R1);
F=R1G46**exteriorPower_2 dual R2;
Z=sectionZeroLocus F;
chi(OO_Z);
\end{verbatim}
Running the previous code one verifies $\chi(\of_Z)=3$. in particular the statement follows by simply applying Beauville-Bogomolov decomposition theorem.
\end{proof}
The deformation type of $Z$ can be shown to be the expected one as follows.
\begin{proposition}\label{gp2}  $Z$ is isomorphic to Hilb$^2(S^8)$.
\end{proposition}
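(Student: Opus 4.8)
The plan is to give an explicit linear-algebraic model of $Z$ and to identify it with a determinantal fourfold in $\Gr(4,6)$ whose points are exactly the $4$-planes spanned by pairs of points of $S_8$. First I would unwind the section. Since $H^0(\Gr(4,6),\W^2\mR^\vee_{4,6})=\W^2 V_6^\vee$ and $H^0(\Gr(2,6),\mR^\vee_{2,6})=V_6^\vee$, a general section $s$ is a general tensor in $\W^2 V_6^\vee\otimes V_6^\vee$, equivalently a general injection $\psi\colon V_6\hookrightarrow\W^2 V_6^\vee$, $v\mapsto\psi(v)$, with image a general $6$-dimensional $W\subset\W^2 V_6^\vee$. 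By construction $S_8=\Gr(2,6)\cap\PP(W^\perp)$ is the associated general genus $8$ K3 surface. Tracing the evaluation of $s$ against the fibre $\W^2 U_4^\vee\otimes U_2^\vee$ shows
$$Z=\{([U_4],[U_2])\in\Gr(4,6)\times\Gr(2,6)\ :\ \psi(v)|_{\W^2 U_4}=0\ \text{ for all }v\in U_2\}.$$
Thus, writing $M(U_4)\colon V_6\to\W^2 U_4^\vee$ for the restriction map $v\mapsto\psi(v)|_{\W^2 U_4}$ (a $6\times 6$ matrix depending on $U_4$), the fibre of the first projection $p_1$ over $[U_4]$ is $\Gr(2,\Ker M(U_4))$.

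The key point is a rank count. The locus where $\dim\Ker M(U_4)\ge 2$ is where $\operatorname{rank}M(U_4)\le 4$, of expected codimension $(6-4)^2=4$ in $\Gr(4,6)$, hence of dimension $4=\dim Z$; the deeper stratum $\operatorname{rank}M(U_4)\le 3$ has expected codimension $9>8$, so for general $\psi$ it is empty. Consequently $\dim\Ker M(U_4)=2$ on the whole image of $p_1$, the fibres $\Gr(2,\Ker M(U_4))$ are single points, and $p_1$ identifies $Z$ with the smooth determinantal fourfold $D=\{[U_4]:\operatorname{rank}M(U_4)\le 4\}\subset\Gr(4,6)$. I would need a transversality argument to make the genericity (expected codimensions, emptiness of the rank $\le 3$ stratum, smoothness of $D$) rigorous for a general section.

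Next I would build the morphism to $\mathrm{Hilb}^2 S_8$. For $[U_4]\in D$ the image of $M(U_4)$ is a $4$-dimensional subspace of $\W^2 U_4^\vee$, whose annihilator is a $2$-plane $\Lambda\subset\W^2 U_4$, i.e. a line $\PP(\Lambda)\subset\PP(\W^2 U_4)=\PP^5$. A point $[U_2']\in\Gr(2,U_4)$ lies on $S_8$ precisely when its Plücker point lies in $\PP(\Lambda)$, so
$$S_8\cap\Gr(2,U_4)=\Gr(2,4)\cap\PP(\Lambda)=Q\cap\PP^1,$$
the intersection of the Plücker quadric $Q$ with a line: a length-$2$ subscheme of $S_8$. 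This defines $\Phi\colon Z\cong D\to\mathrm{Hilb}^2 S_8$, $[U_4]\mapsto S_8\cap\Gr(2,U_4)$. Its inverse sends a pair $\{[U_2'],[U_2'']\}$ to the span $U_2'+U_2''$; one checks that for general $S_8$ the two $2$-planes meet only in $0$ (there are no lines of $\Gr(2,6)$ on $S_8$), so the span is $4$-dimensional and lands in $D$. Hence $\Phi$ is a bijective morphism between smooth projective fourfolds, and since $\mathrm{Hilb}^2 S_8$ is normal, Zariski's main theorem upgrades it to an isomorphism. Conceptually this is the Beauville--Donagi picture: $D$ is the Fano variety of lines of the Pfaffian cubic fourfold $\{\Pf(\psi(v))=0\}\subset\PP(V_6)$ attached to $S_8$, so the deformation type computed in the previous proposition is confirmed.

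The main obstacle is not the construction of $\Phi$ but the passage from a generic statement to a global one: I must verify that for a general section the bad loci genuinely disappear — the rank $\le 3$ stratum is empty (so that $D$ is smooth and $p_1$ is an isomorphism rather than merely birational), the length-$2$ family $S_8\cap\Gr(2,U_4)$ is flat over $D$ (so that $\Phi$ is a morphism and not just a rational map), and $S_8$ contains no lines of $\Gr(2,6)$ (so that the inverse is everywhere defined). Each is a transversality computation, but together they are exactly what prevents the argument from collapsing to a mere birational equivalence; and for irreducible holomorphic symplectic fourfolds birational is strictly weaker than isomorphic, so controlling these loci and then invoking normality is where the real content lies.
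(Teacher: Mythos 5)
Your argument is correct in substance and lands on the right isomorphism, but it is organized in the opposite direction from the paper's. The paper starts from a reduced point of $\mathrm{Hilb}^2(S_8)$, takes the span of the two $2$-planes to produce the $\Gr(4,6)$-coordinate and the image of the restricted (transposed) tensor to produce the $\Gr(2,6)$-coordinate, and then concludes by observing that, after dualizing, this is the classical Beauville--Donagi identification of $\mathrm{Hilb}^2(S_8)$ with the variety of lines of the associated Pfaffian cubic --- so bijectivity is outsourced to a known theorem rather than proved. You instead construct the inverse: you analyse the projection $Z\to\Gr(4,6)$ as the determinantal locus $\{\mathrm{rank}\,M(U_4)\le 4\}$, kill the rank $\le 3$ stratum by an expected-codimension count, and recover the length-two subscheme as the residual intersection $S_8\cap\Gr(2,U_4)=\Gr(2,4)\cap\PP(\Lambda)$, a quadric cut by a line. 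What your route buys is a morphism defined on all of $Z$, including the points corresponding to non-reduced subschemes (tangent directions on $S_8$), which the paper's pointwise description on pairs $(u_1,u_2)$ does not address, and it makes the final ZMT step honest; what it costs is the list of transversality checks you flag yourself (emptiness of the deep stratum, absence of lines on $S_8$, injectivity at non-reduced points), which are standard but do need to be recorded. One small caveat: your aside that $D$ \emph{is} the Fano variety of lines of the Pfaffian cubic holds only via the correspondence $U_4\mapsto U_2=\Ker M(U_4)$ (for $v\in U_2$ the two-form $h(v)$ has $U_4$ isotropic, hence rank at most $4$ and vanishing Pfaffian), not as a literal equality of subvarieties of $\Gr(4,6)$; this is exactly the dualization the paper invokes.
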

\begin{proof} Let $h \in \W^2V_6^* \otimes V_6^*$ defining $X_{M_7}$. As above, we can consider $h$ as a morphism $$h: V_6 \to \W^2 V_6^*.$$ A point in Hilb$^2(S_8)$ is therefore given by a pair $(u_1, u_2)$, $u_i \in \W^2 V_6$  on both of which $h$ vanishes. Consider $W \subset \W^2 V_6$ spanned by $u_1, u_2$. Consider further the restricted morphism $\overline{h}^t: W \to V^\vee_6$. This has rank 2, and we can take $P= \mathrm{Im}(\overline{h}^t)$. By construction $h$ vanishes on the pair $(W,P) \in \Gr(4,6) \times \Gr(2,6)$, thus defining a point in $Z$. From this construction, it is clear that $W$ determines $P$. Moreover, the map we constructed inside $\Gr(4,6)$ can be seen as the same map (after duality) which associates to Hilb$^2(S_8)$ a line in the pfaffian cubic fourfold, hence it is an isomorphism.  \end{proof}
We point out the similarities between this contruction and \cite[Proposition B.6.3]{kps}. Here it is proved how the variety of lines (resp. conics) of a smooth cubic threefold (resp. a generic Fano threefold of genus 8) is isomorphic to a section of the bundle $\W^2 \mR^{\vee}_{4,5} \otimes \mR^{\vee}_{2,5}$ over $\Gr(4,5) \times \Gr(2,5)$. In turn, their proof can be modified to give an alternative proof of \ref{gp2}.
\subsection{Sporadic examples}
This subset of the list is the most interesting one. Indeed for these Fano we cannot produce a systematic method as in the \emph{Mukai} case. For each one of them already proving that they are of K3 type requires an ad-hoc strategy. Our most interesting results comes indeed from this section: indeed we reinterprete the Debarre-Voisin IHS fourfold as moduli space of relevant objects on a Fano of K3 type in two different ways, namely as in Theorem \ref{hk} and Proposition \ref{t2}. Moreover we produce the first examples of a Fano with multiple K3 structures, cf. Proposition \ref{3k3} and with a mixed $(2,3)$ CY structure, cf. Proposition \ref{23cy}. Moreover we do not limit ourselves to the computation of the Hodge numbers: we give indeed geometrical descriptions of many of the examples we consider, since we believe them to be a rich and beautiful sources of geometries.
\subsection{S1: four codimensional linear section of $\Gr(2,8)$} 
We already considered this example in our previous work \cite{eg1}, therefore we will not spend too much time on it. It is described in a surprisingly simple way as a codimensional 4 linear section of the Grassmannian $\Gr(2,8)$.
\begin{proposition}\label{s1} Let $X_{1,1,1,1} \subset \Gr(2,8)$ given by a generic section of $\of_G(1)^{\oplus 4} $. Then $X$ is an 8-fold of K3 type,with $h^{4,4}_{\van}(X)=19$.
\end{proposition}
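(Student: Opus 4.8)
The plan is to isolate the single interesting piece of the Hodge structure and compute it directly. Since $\dim\Gr(2,8)=12$ and $\of_G(-8)\cong\omega_G$, adjunction on the four hyperplane sections gives $\omega_X\cong\of_X(-4)$, so $X$ is a Fano eightfold of index $4$. The Grassmannian is central, i.e.\ $H^\bullet(\Gr(2,8),\C)$ is of Hodge--Tate type, and $X$ is an ample complete intersection inside it; hence the Lefschetz hyperplane theorem gives $H^k(X,\C)\cong H^k(\Gr(2,8),\C)$ for $k\le 7$, all of type $(p,p)$. Consequently the only cohomology that can fail to be Hodge--Tate is the middle group $H^8(X,\C)$, and more precisely its vanishing part $H^8_{\van}(X)=\operatorname{coker}\bigl(H^8(\Gr(2,8),\C)\to H^8(X,\C)\bigr)$. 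It therefore suffices to prove the extremal vanishings $h^{8,0}_{\van}=h^{7,1}_{\van}=h^{6,2}_{\van}=0$, that $h^{5,3}_{\van}=1$, and that $h^{4,4}_{\van}=19$.

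To compute these numbers I would use the Jacobian/Griffiths ring description of the vanishing cohomology of a linear section of a Grassmannian developed in \cite{eg1}, which identifies each $H^{8-p,p}_{\van}(X)$ with a prescribed graded piece of an explicit ring built from $\bigoplus_k H^0(\Gr(2,8),\of_G(k))$ modulo a Jacobian-type ideal. Equivalently, and in the concrete spirit of the computation for \textbf{M3}, one resolves $\of_X$ by the Koszul complex of the regular section of $E=\of_G(1)^{\oplus 4}$,
$$0\to\of_G(-4)\to\of_G(-3)^{\oplus 4}\to\of_G(-2)^{\oplus 6}\to\of_G(-1)^{\oplus 4}\to\of_G\to\of_X\to 0,$$
tensors with $\Omega^p_G$ and the conormal sequences, and evaluates the resulting twisted Hodge groups $H^q(\Gr(2,8),\Omega^p_G(-j))$ by Bott's theorem as in \cite{snow,peternell}. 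Running this through the range $p-q\ge 3$ kills $h^{8,0}_{\van},h^{7,1}_{\van},h^{6,2}_{\van}$ and leaves a single surviving class giving $h^{5,3}_{\van}=1$; combining the off-diagonal vanishings below the middle (Lefschetz) with the Euler characteristics $\chi(\Omega^p_X)$ computed by Hirzebruch--Riemann--Roch then pins down $h^{4,4}_{\van}=19$. This already exhibits $H^8_{\van}(X)$ as a level-$2$ (K3-type) Hodge structure of shape $(1,19,1)$.

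Finally I would give a conceptual reading of the number $19$, together with the derived statement of the table, via Kuznetsov's homological projective duality. In the Plücker embedding $\Gr(2,8)\subset\PP(\W^2 V)$ the projectively dual variety is the Pfaffian quartic $\Pf\subset\PP(\W^2 V^\vee)$ of degenerate skew forms; our $X=\Gr(2,8)\cap\PP^{23}$ corresponds, under HPD, to the orthogonal section $Y=\Pf\cap\PP^{3}$, a quartic surface in $\PP^3$, that is a general K3 surface $S_3$ of genus $3$. HPD then yields a semiorthogonal decomposition $\Db(X)=\langle \Db(S_3),\dots\rangle$ with exceptional complement, whence $H^8_{\van}(X)\cong H^2_{\van}(S_3)$ up to Tate twist; for a general (Picard rank one) $S_3$ this gives $h^{4,4}_{\van}(X)=h^{1,1}_{\van}(S_3)=20-1=19$, recovering both the K3 type and the precise value, as well as the inclusion $\Db(S_3)\hookrightarrow\Db(X)$. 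The main obstacle is the Bott-vanishing bookkeeping in the direct route: one must organise the spectral sequence coming from the Koszul and conormal filtrations and verify that every twisted bundle $\Omega^p_G(-j)$ on $\Gr(2,8)$ has the expected (mostly vanishing) cohomology, the one surviving contribution being exactly the generator of $h^{5,3}_{\van}$. On the HPD side the delicate point is instead the genericity hypotheses, so that $\Pf\cap\PP^3$ avoids the rank $\le 4$ stratum, is a smooth K3, and Kuznetsov's theorem applies.
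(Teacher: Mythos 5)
Your proposal is correct and follows essentially the same route as the paper, which proves Proposition \ref{s1} by deferring to the Griffiths-ring/Bott-vanishing computation of \cite{eg1} and then separately remarks that the projective dual of $X_{1,1,1,1}\subset\Gr(2,8)$ is the quartic K3 surface $S_3\subset\PP^3$ with $\Db(S_3)\hookrightarrow\Db(X)$ via \cite{segalthomas}. Your write-up merely spells out the Lefschetz reduction, the Koszul/Bott bookkeeping, and the HPD genericity points in more detail than the paper does.
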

We remark that there is another FK3 closely related to S1. This is $X_{(1,1)} \subset \Gr(2,8) \times \PP^3$. In our main table this is listed as M13. We chose this notation since, although there is no K3 in the Mukai model related, it shares many similarities with the other Fano in the \textbf{M} group. In particular one can apply directly Cayley trick to prove that this Fano is of K3 type.\\
 As already remarked in our previous work the projective dual of $X_{1,1,1,1} \subset \Gr(2,8)$ is quartic K3 surface $S_3 \subset \PP^3$. An embedding of the derived category of the quartic K3 inside the derived category of the above linear section is proved in \cite{segalthomas}, Thm 2.8.\\
We already conjectured that this complete intersection in $\Gr(k,n)$ should be the only FK3 obtained in this way. We repeat the precise formulation of this conjecture here.
\begin{conj} Let $X=X_{d_1, \ldots, d_c} \subset \Gr(k,n)$ a Fano smooth complete intersection of even dimension. Then $X$ is not of K3-type unless $$(\lbrace d_i \rbrace, k,n)=(\lbrace 3 \rbrace,1,6),(\lbrace 2,1\rbrace,2,5), (\lbrace 1,1,1,1\rbrace,2,8), (\lbrace 1 \rbrace, 3,10).$$ 
\end{conj}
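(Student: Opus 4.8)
The plan is to reduce the K3-type condition to a purely numerical constraint on $(k,n,\{d_i\})$ and then solve the resulting Diophantine system. First I would note that since $\Gr(k,n)$ has only Hodge--Tate cohomology (everything of type $(p,p)$) and $X$ is a smooth complete intersection, the Lefschetz hyperplane theorem together with the hard Lefschetz isomorphisms shows that every cohomology group of $X$ away from the middle degree $n=\dim X$ is of level $0$, inherited from the ambient space. Hence $X$ is of K3 type if and only if the vanishing cohomology $H^n_{\van}(X)$ carries a K3-type Hodge structure, i.e.
\[
h^{n,0}_{\van}=h^{n-1,1}_{\van}=\cdots=h^{\frac n2+2,\frac n2-2}_{\van}=0,\qquad h^{\frac n2+1,\frac n2-1}_{\van}=1 .
\]
The first equality is automatic because $X$ is Fano ($h^{n,0}=h^0(K_X)=0$), so the real content is the vanishing of the intermediate pieces $h^{\frac n2+1+r,\frac n2-1-r}_{\van}$ for $r\ge 1$ together with the one-dimensionality of the top surviving piece.

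Second, I would compute these Hodge pieces through a Griffiths-type residue calculus for complete intersections in $\Gr(k,n)$, extending the Jacobian-ring construction already used in \cite{eg1}. Concretely, resolving $\of_X$ by the Koszul complex of the section cutting out $X$ and combining with the Bott--Borel--Weil description of $H^\bullet(\Gr(k,n),\Omega^p\otimes\of(j))$, one identifies each $H^{p,q}_{\van}(X)$ with an explicit graded piece of a finite-dimensional ring built from the twisted holomorphic forms on the Grassmannian. The vanishing of $h^{\frac n2+1+r,\frac n2-1-r}_{\van}$ for $r\ge 1$ then becomes the vanishing of certain Bott cohomology groups $H^\bullet(\Gr(k,n),\Omega^p\otimes\of(j))$ whose twist $j$ is a linear function of $\sum_i d_i$ and of $r$. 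Because the Grassmannian has index $n$ (so $X$ is Fano with $\iota_X=n-\sum_i d_i$ and $\dim X=k(n-k)-c$), these vanishing requirements translate into a finite list of inequalities relating $\sum_i d_i$, $k$, $n$ and the parity of $\dim X$.

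Third, I would solve this Diophantine system exhaustively, exactly as the partial analyses in Section~2 were carried out for the individual families $\Gr(k,k+l)$, $\SGr$, $\OGr$, etc., but now imposing the full K3-type vanishing in place of the weaker numerology of Criterion~\ref{num}. Parametrising by $k$ and by $l=n-k$ and using $\dim X=kl-c$ and $\iota_X=n-\sum_i d_i$, the vanishing conditions should rule out all but finitely many $(k,n)$; a direct inspection of the admissible multidegrees in each surviving case then isolates precisely $(\{3\},1,6)$, $(\{2,1\},2,5)$, $(\{1,1,1,1\},2,8)$ and $(\{1\},3,10)$.

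The hard part is the second step. Unlike the projective-space case, where Griffiths' theory gives a clean Macaulay-dual Jacobian ring and the K3-type condition for hypersurfaces reduces to a single socle computation (yielding only the cubic fourfold, cf. \cite{ps18}), the Jacobian-type ring attached to $\Gr(k,n)$ lacks such a transparent duality, and the Bott-vanishing conditions for the intermediate pieces do not organise into one uniform statement valid for all $(k,n,\{d_i\})$. Establishing a clean, closed-form vanishing criterion---rather than a case-by-case verification that only terminates once one has already guessed the four solutions---is exactly the obstruction that has so far kept this statement a conjecture; a categorical reformulation (demanding that the ambient Grassmannian carry a rectangular Lefschetz decomposition and then tracking the residual component, in the spirit of the remarks following Criterion~\ref{num}) seems the most promising route to removing it.
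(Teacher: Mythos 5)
The first thing to say is that the paper does not prove this statement: it is presented as a conjecture, and the authors state explicitly that they have not been able to prove it and only know that no counterexample has been found. So there is no proof in the paper to compare yours against. More importantly, your proposal is not a proof either, and you concede as much in your closing paragraph. Your first step (reducing K3-type to conditions on $H^n_{\van}(X)$, using that $\Gr(k,n)$ is central, Lefschetz, and $h^{n,0}=0$ for Fano) and your third step (solving a Diophantine system in $k$, $l=n-k$, $c$, $\sum d_i$) are correct reductions, and the numerical bookkeeping ($\dim X=k(n-k)-c$, $\iota_X=n-\sum_i d_i$) is right. But all of the mathematical content sits in your second step, and that step is asserted, not carried out.

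Concretely, the gap is this: the Koszul resolution of $\of_X$ for $E=\bigoplus_i\of(d_i)$ gives a spectral sequence with $E_1$ terms $H^\bullet(\Gr(k,n),\Omega^p_G\otimes \W^q E^\vee)$, each computable by Borel--Bott--Weil for \emph{fixed} $(k,n,\{d_i\})$; but there is no known closed-form, uniform-in-the-parameters criterion for when the intermediate pieces $h^{\frac n2+1+r,\frac n2-1-r}_{\van}$ all vanish and the top piece is one-dimensional. The weights entering BBW depend delicately on $k$, $n$ and the partition of $\sum d_i$ among the $\W^q E^\vee$ summands, the number of disorders is not a monotone function of the twist, and the differentials of the spectral sequence need not vanish --- the paper's Appendix C exhibits exactly such a cancellation producing a ``fake K3 structure'' for $X_{\mR^\vee(1)}\subset\Gr(2,6)$, which shows that term-by-term bookkeeping is unreliable even in a single example. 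Without a proved vanishing criterion, the ``finite list of inequalities'' you invoke does not exist as a theorem, so your step three has nothing to solve and the argument never closes. Your diagnosis of where the difficulty lies (the absence of a Macaulay-type duality for the Jacobian-style ring of \cite{eg1}, and the possible categorical reformulation via rectangular Lefschetz decompositions) agrees with the paper's own remarks around Criterion \ref{num}, but a correct diagnosis of the obstruction is not a proof that overcomes it.
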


\subsection{S2: a K3 of genus 7 from $\OGr(3,8)$}
This sporadic example is a linear section $X= \OGr(3,8) \cap H$ of the orthogonal Grassmannian $\OGr(3,8)$. It is worth to spend few words on the ambient variety. In general the orthogonal Grassmannian $\OGr(n-1, 2n)$ behaves differently from $\OGr(k,2n)$, which for $k \neq n-1$ is a prime Fano variety. Indeed $\OGr(n-1, 2n)$ can be realised as a $\PP^{n-1}$ bundle over (both of) $\mathbb{S}_{n}^i$, the latter denoting the two connected component of the maximal orthogonal Grassmannian $\OGr(n, 2n)$ in the spinor embedding. In particular the Picard group of  $\OGr(n-1, 2n)$ has rank 2 with the Pl\"ucker line bundle $\mathcal{L} :=  \of_{\mathbb{S}^1} (1)\boxtimes  \of_{\mathbb{S}^2}(1)$ is very ample.  $\OGr(n-1, V_{2n})$ is non-degenerate in the Pl\"ucker embedding, and $$H^0(\OGr(n-1, 2n), \mathcal{L}) \cong \W^{n-1} V_{2n}^{\vee}.$$
With $X=X_1 \subset \OGr(3,8)$ in the introductory table we mean the zero locus of a generic global section of $\mathcal{L}$. Such $X$ is an 8-fold of index $\iota=3$. Since it is a linear section of a central variety, to compute its Hodge numbers it suffices to compute the Euler characteristics $\chi(\Omega^i_X)$, together with the knowledge of the cohomology of $\OGr(3,8)$. A full computation by the means of Borel-Bott-Weil theorem, can be found in the PhD thesis of the first author. We recall here the result. 
\begin{lemma}[cf. \cite{thesis}, Proposition A.1.1] $X$ is a Fano 8-fold of K3 type with $h^{4,4}(X)=24$, and its vanishing subspace of rank 19.
\end{lemma}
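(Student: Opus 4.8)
The plan is to exploit that $X$ is a smooth ample divisor in the central variety $G:=\OGr(3,8)$ and to pin down its Hodge numbers degree by degree, the only delicate degree being the middle one. First I would record that $G$ is a $9$-dimensional Fano of index $4$: a generic section of $\mathcal{L}$ is smooth by Bertini, and since $K_G\cong\mathcal{L}^{-4}$, adjunction gives $K_X\cong\mathcal{L}^{-3}|_X$, so $X$ is a Fano $8$-fold of index $3$. Because $\mathcal{L}$ is very ample, the Lefschetz hyperplane theorem yields $H^j(G,\Z)\cong H^j(X,\Z)$ for $j<8$ and an injection $H^8(G)\hookrightarrow H^8(X)$. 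As $G$ is central, every $H^j(X)$ with $j\neq 8$ is of Hodge--Tate type, and so is the image of $H^8(G)$; hence the only off-diagonal Hodge numbers of $X$ live in the vanishing part $H^8_{\van}(X)=H^8(X)/H^8(G)$, and the whole statement reduces to computing the middle Hodge numbers $h^{p,8-p}(X)$.

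The second step is to convert each middle Hodge number into a single Euler characteristic. Combining Hodge--Tate vanishing below the middle (Lefschetz plus centrality) with Poincaré duality on $X$, one checks that $\chi(\Omega^p_X)=\sum_q(-1)^q h^{p,q}(X)$ collapses: for $p=4$ only the term $q=4$ survives, so $h^{4,4}(X)=\chi(\Omega^4_X)$, while for $p\geq 5$ the only survivors are the middle term and the diagonal term, giving $h^{p,8-p}(X)=(-1)^p\chi(\Omega^p_X)-h^{8-p,8-p}(G)$. The diagonal input $h^{8-p,8-p}(G)$ is read off from the cohomology of $G$, which is a $\PP^3$-bundle over the spinor variety $\mathbb{S}_4\cong Q_6$; its Poincaré polynomial is $(1+t^2+t^4+2t^6+t^8+t^{10}+t^{12})(1+t^2+t^4+t^6)$, whence $h^{0,0}(G)=1$, $h^{1,1}(G)=2$, $h^{2,2}(G)=3$, $h^{3,3}(G)=h^{4,4}(G)=5$.

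The computational heart is the evaluation of the $\chi(\Omega^p_X)$ themselves, for which I would run exactly the argument of the proof of Proposition~\ref{gq1}: feed the conormal and restriction sequences \eqref{seq1} and \eqref{seq2} (now with $G$ and $\mathcal{L}$ in the roles of $Y$ and $\mL$), twisted by powers of $\mathcal{L}^{-1}$ as needed, to write each $\chi(\Omega^p_X)$ as an alternating sum of the Euler characteristics $\chi(G,\Omega^q_G\otimes\mathcal{L}^{-m})$. Each of the latter I would compute by the Borel--Bott--Weil theorem on $G=\mathrm{Spin}_8/P$: decompose $\Omega^q_G$ into its irreducible homogeneous summands, twist, and apply the dotted Weyl action to locate the single cohomology group (if any) that each summand contributes. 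Inserting the results into the collapsed identities above should yield $h^{8,0}=h^{7,1}=h^{6,2}=0$, $h^{5,3}=1$ and $h^{4,4}=24$; subtracting the five ambient $(4,4)$-classes coming from $H^8(G)$ leaves $h^{4,4}_{\van}(X)=19$. Thus $H^8_{\van}(X)$ has level $2$ with extremal Hodge number $1$ and Hodge diamond $(1,19,1)$, so $X$ is of K3 type.

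The main obstacle is the Borel--Bott--Weil bookkeeping on $\mathrm{Spin}_8/P$: one must correctly identify the parabolic $P$ cutting out $\OGr(3,8)$, decompose the bundles $\Omega^q_G$ into $P$-irreducibles — which is where the representation theory of $D_4$ and its triality make the weight computations delicate — and carefully distinguish singular from regular weights after the affine Weyl shift in order to know in which cohomological degree each summand lands. Everything else is formal once the numbers $\chi(G,\Omega^q_G\otimes\mathcal{L}^{-m})$ are in hand; this is precisely the computation carried out in \cite{thesis}.
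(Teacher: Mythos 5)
Your proposal is correct and follows essentially the same route as the paper, which likewise reduces the lemma to the centrality of $\OGr(3,8)$ plus the Euler characteristics $\chi(\Omega^i_X)$ and defers the actual Borel--Bott--Weil bookkeeping to \cite{thesis}. Your added details check out: the $\PP^3$-bundle structure over $Q_6$ gives $h^{4,4}(\OGr(3,8))=5$, consistent with $h^{4,4}_{\van}(X)=24-5=19$ and with the Euler number $e(X)=48$ one gets from the Ito--Miura--Okawa--Ueda fibration.
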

We explain now a link between this 8-fold $X$ and a K3 of genus 7. Recall from the work of Mukai that a generic K3 of such genus can be obtained by cutting $\mathbb{S}_{10}$ with 8 hyperplanes. Here we use a different description of the aforementioned K3. Let $X \subset \OGr(3,8)$ defined by $V(\sigma)$, $\sigma \in H^0(\mL)$. Let $\mathbb{S}_8$ be (one of the two connected component of) the orthogonal Grassmannian $\OGr(4,8)$, denote with $\mR$ the restriction of the tautological bundle. Since $\sigma$ can be seen as an element in $H^0(\mathbb{S}_8, \W^3\mR^{\vee})$ we can denote by $S= V(\sigma) \subset \mathbb{S}_8$. It is easy to check that $S$ is a K3 of genus 7 (notice that $\mathbb{S}_8$ is nothing but a 6-dimensional quadric hypersurface in disguise, either using triality or checking dimension and invariants). Such $S$ is responsible for the interesting part of the derived category (and therefore the Hodge theory of $X$). Indeed we quote the following result of Ito-Miura-Okawa-Ueda. Denote $\pi$ the restriction of the projection $p$ from $X$ to (one of the two) $\mathbb{S}_8$.
\begin{lemma}[Lemma 2.1 in \cite{ito}]  The morphism $\pi$ is a $\PP^2$-bundle over $\mathbb{S}_8 \smallsetminus S$ and a $\PP^3$-bundle over $S$, locally trivial in the Zariski topology.
\end{lemma}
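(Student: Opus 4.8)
The plan is to exploit the $\PP^3$-bundle structure of the ambient orthogonal Grassmannian and to read off the fibres of $\pi$ from the restriction of $\sigma$ to these $\PP^3$'s. Recall that $\OGr(3,8)=\OGr(n-1,2n)$ with $n=4$, so every isotropic $3$-plane $U\subset V_8$ lies in exactly one maximal isotropic $4$-plane of each of the two families. Fixing the family that defines $\pi$, the map $p\colon \OGr(3,8)\to \mathbb{S}_8$ sending $U$ to this $4$-plane $W$ realises $\OGr(3,8)$ as $\PP(\mR^\vee)$, with fibre $p^{-1}(W)=\Gr(3,W)\cong\PP^3$ (any $3$-plane inside the isotropic $W$ is automatically isotropic, and $W$ is recovered from $U$ as the unique $4$-plane of the chosen family through $U$). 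The first step is to make this bundle structure and its tautological data precise.

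Next I would identify $\sigma$ fibrewise. Under the Plücker embedding $\Gr(3,W)\hookrightarrow \PP(\W^3 W)\cong\PP^3$ the Plücker bundle $\mL=\det \mR_3^\vee$ restricts to $\of_{\PP^3}(1)$, so $\sigma|_{p^{-1}(W)}$ is a linear form, i.e. an element of $H^0(\Gr(3,W),\of(1))=\W^3 W^\vee$. Pushing forward gives $p_*\mL\cong\W^3\mR^\vee$ (a rank-$4$ bundle on the six-dimensional $\mathbb{S}_8\cong Q_6$) and identifies $\sigma\in H^0(\OGr(3,8),\mL)$ with the section $\bar\sigma\in H^0(\mathbb{S}_8,\W^3\mR^\vee)$ whose zero locus is exactly $S$. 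The conclusion then becomes purely fibrewise: $\pi^{-1}(W)=\{U\in\Gr(3,W):\sigma(U)=0\}$ is the zero locus in $\PP^3$ of the linear form $\bar\sigma(W)$. If $W\notin S$ this form is nonzero and $\pi^{-1}(W)$ is a hyperplane $\PP^2$; if $W\in S$ the form vanishes identically and $\pi^{-1}(W)$ is all of $\PP^3$.

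Finally I would upgrade this pointwise dichotomy to Zariski-locally trivial bundle statements. Over the open set $\mathbb{S}_8\smallsetminus S$ the section $\bar\sigma$ of $\W^3\mR^\vee\cong \mR\otimes\det\mR^\vee$ is nowhere vanishing, hence (after the standard twist by $\det\mR$) defines a nowhere-zero map $\mR^\vee\to\of$ with rank-$3$ kernel $\mathcal{K}$, and $\pi^{-1}(\mathbb{S}_8\smallsetminus S)=\PP(\mathcal{K})$ is a $\PP^2$-subbundle of $\PP(\mR^\vee)=\OGr(3,8)$, automatically Zariski-locally trivial as the projectivisation of a vector bundle. Over $S$ one has $\pi^{-1}(S)=p^{-1}(S)$, the restriction of the ambient $\PP^3$-bundle, again Zariski-locally trivial. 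The part requiring genuine care, rather than formal manipulation, is the pair of identifications in the second step, namely that $\mL$ restricts to $\of(1)$ on the fibres (so that $\sigma$ really restricts to a \emph{linear} form) and that $p_*\mL\cong\W^3\mR^\vee$; both reduce to the Plücker description of $\Gr(3,W)$ together with standard Borel--Bott--Weil computations on $\OGr(3,8)$ and $\mathbb{S}_8$, and it is here that one also records that for general $\sigma$ the locus $S$ is the smooth genus-$7$ K3 appearing in the surrounding discussion.
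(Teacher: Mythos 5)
Your proof is correct: the projection $p$ realises $\OGr(3,8)$ as the $\PP^3$-bundle $\PP(\W^3\mR)$ over $\mathbb{S}_8$, the section $\sigma$ pushes forward to $\bar\sigma\in H^0(\mathbb{S}_8,\W^3\mR^\vee)$ cutting out $S$, and the fibrewise dichotomy (nonzero linear form versus identically zero form on $\Gr(3,W)\cong\PP^3$) together with the rank-$3$ subbundle $\mathcal{K}=\ker(\mR^\vee\otimes\det\mR\to\of)$ over $\mathbb{S}_8\smallsetminus S$ gives exactly the stated Zariski-locally trivial $\PP^2$- and $\PP^3$-bundle structures. Note that the paper supplies no proof of this lemma, quoting it directly from Ito--Miura--Okawa--Ueda, so there is nothing internal to compare against; your argument is the natural one and fills that gap correctly.
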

In turn we can use an adapted version of Orlov's blow-up formula to this case. This is indeed a generalisation of the Cayley trick. We borrow this result from the forthcoming \cite{nested}, where it will be shown in full details and generality. For this reason, the proof will be omitted here.\\
First, in the notation above, denote by $\iota: S \subset \mathbb{S}_8 $. The above Lemma is equivalent to the following commutative diagram


$$\xymatrix{
F \ar@{^{(}->}[r]^j \ar[d]_p & X \ar[d]^\pi \\
S \ar@{^{(}->}[r]^\iota & \mathbb{S}_8,}$$

with $F$ a smooth projective subvariety, $j:F \subset X$ of codimension $d=4+2-3=3$ and a locally free sheaf $\mathcal{F}$ of rank $4$ on $S$ such that $p:F \simeq \PP_S(\mathcal{F}) \to S$.
We denote by $\ko_F(H)$ the relative ample bundle of $p$ and we assume that there is a line
bundle $\ko_Y(H)$ such that $\ko_Y(H)_{\vert F} \simeq \ko_F(H)$ and that there is a vector
bundle $\ke$ of rank $d$ on $X$ such that $F$ is the zero locus of a general section
of $\pi^*\ke \otimes \ko_Y(-H)$.

%
%

We define the functors $\Phi_l:\Db(S) \to \Db(F)$ by the formula
$\Phi_l(A)= j_* (p^* A \otimes \ko(lH))$.

\begin{proposition}\label{o2}
In the configuration above, $\Phi_l$ is fully faithful for any integer $l$, and there is a semiorthogonal
decomposition:
$$\Db(Y)=\sod{\Phi_{-1}\Db(Z),\pi^*\Db(X),\ldots,\pi^*\Db(X)\otimes \ko_Y(2H)}$$
\end{proposition}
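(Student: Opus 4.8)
The plan is to verify the three defining properties of a semiorthogonal decomposition separately: full faithfulness of each $\Phi_l$, pairwise semiorthogonality in the stated order, and generation. The first two reduce to one mechanism — adjunction along $j$ and along $p$, combined with the relation $\pi\circ j=\iota\circ p$ and the fibrewise vanishings $H^\bullet(\PP^r,\ko(-k))=0$ for $0<k\le r$ — whereas generation is the genuine difficulty. In the concrete situations of the table one may also sanity-check the outcome against Theorem \ref{cayley}, since there $Y$ is literally a section of $\ko(1)$ on a genuine projective bundle; but for the intrinsic statement I would argue directly.

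First I would record the normal bundle. Since $F$ is the zero locus of a general section of $\pi^*\ke\otimes\ko_Y(-H)$ with $\ke$ of rank $d=3$, and since $\pi\circ j=\iota\circ p$, one gets $N_{F/Y}\cong p^*\ke_Z\otimes\ko_F(-H)$ with $\ke_Z:=\iota^*\ke$ of rank $3$, whence $\Lambda^k N_{F/Y}\cong p^*\Lambda^k\ke_Z\otimes\ko_F(-kH)$ and $\det N_{F/Y}\cong p^*\det\ke_Z\otimes\ko_F(-3H)$. For full faithfulness I would use the self-intersection spectral sequence for $j^*j_*$ and the projection formula along $p$ to compute $\Hom^\bullet_Y(\Phi_lA,\Phi_lB)$ from a spectral sequence with
\begin{align*}
E_1 &= \bigoplus_{k=0}^{3}\Hom^\bullet_F\big(p^*A\otimes\Lambda^k N_{F/Y}^\vee,\;p^*B\big)[-k] \\
&= \bigoplus_{k=0}^{3}\Hom^\bullet_Z\big(A,\;B\otimes\Lambda^k\ke_Z\otimes Rp_*\ko_F(-kH)\big)[-k],
\end{align*}
the twist $\ko(lH)$ having cancelled. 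Because $p\colon F\cong\PP_Z(\mathcal{F})\to Z$ is a $\PP^3$-bundle, $Rp_*\ko_F(-kH)=0$ for $k=1,2,3$ and equals $\ko_Z$ for $k=0$; only the $k=0$ summand survives and returns $\Hom^\bullet_Z(A,B)$, independently of $l$. Semiorthogonality is the same computation read in the two relevant directions. For the three pullback blocks, $\Hom_Y(\pi^*B'\otimes\ko(jH),\pi^*B\otimes\ko(iH))=\Hom_X(B',B\otimes R\pi_*\ko_Y((i-j)H))$ vanishes for $j>i$, because every fibre of $\pi$ is a $\PP^2$ or a $\PP^3$ and $H^\bullet(\PP^2,\ko(-1))=H^\bullet(\PP^2,\ko(-2))=H^\bullet(\PP^3,\ko(-1))=H^\bullet(\PP^3,\ko(-2))=0$, so that $R\pi_*\ko_Y(-H)=R\pi_*\ko_Y(-2H)=0$ by cohomology and base change despite the jump over $Z$. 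Likewise $\Hom_Y(\pi^*B\otimes\ko(iH),\Phi_{-1}A)=\Hom_Z(\iota^*B,A\otimes Rp_*\ko_F(-(i+1)H))=0$ for $i=0,1,2$ by the same $\PP^3$-vanishings, placing $\Phi_{-1}\Db(Z)$ correctly to the left.

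The crux is generation, where I would show $\mathcal{D}^\perp=0$ for the subcategory $\mathcal{D}$ generated by the four blocks; each block is admissible, so this forces $\mathcal{D}=\Db(Y)$. Let $G\in\mathcal{D}^\perp$; orthogonality to the pullback blocks gives $R\pi_*(G\otimes\ko(-iH))=0$ for $i=0,1,2$. Over the open set $X\setminus Z$, where $\pi$ is a genuine $\PP^2$-bundle, these three vanishings say exactly that $G$ is right-orthogonal to the relative full exceptional collection $\{\ko,\ko(H),\ko(2H)\}$ of $\Db(\PP^2)$, so relative Beilinson gives $G|_{Y\setminus F}=0$ and $G$ is supported on $F$. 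The remaining and decisive task is to show that an object supported on $F$ and right-orthogonal to all four blocks vanishes; here the extra generator $\Phi_{-1}\Db(Z)$ is precisely what compensates for the jump of the fibre from $\PP^2$ to $\PP^3$ over $Z$ (on a $\PP^3$-fibre the three classes $\ko,\ko(H),\ko(2H)$ are no longer full, and the $\ko(-H)$-twisted pushforward from $F$ supplies the missing object).

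The main obstacle is exactly this analysis of $F$-supported objects. I would carry it out by d\'evissage along the infinitesimal neighbourhoods of $F$, filtering $G$ by powers of the ideal sheaf $\mathcal{I}_F$; the associated graded pieces are pushforwards under $j$ of objects on $F$ twisted by symmetric powers of $N_{F/Y}^\vee$, and the orthogonality conditions, transported through $p$, must annihilate each one in turn. Keeping this honest — tracking the symmetric-power twists and matching them fibrewise against the relative generators together with the single $\Phi_{-1}$-generator — is the delicate point, and the non-reduced structure along $F$ is what makes it subtle rather than formal. A cleaner alternative I would keep in reserve, trading the d\'evissage for an explicit mutation, is to pass to the blow-up $\sigma\colon\Bl_F Y\to Y$ with exceptional divisor $E=\PP_F(N_{F/Y})$, apply Orlov's blow-up formula and the projective-bundle theorem to the (now genuine) bundles appearing there, and descend the resulting decomposition along $\sigma$ back to $\Db(Y)$.
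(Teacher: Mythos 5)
First, a point of comparison: the paper does not actually prove Proposition \ref{o2} — it explicitly states that the result is borrowed from the forthcoming \cite{nested} and that ``the proof will be omitted here.'' So there is no in-paper argument to measure you against, and your proposal has to stand on its own. The parts you do carry out are correct. The identification $N_{F/Y}\cong p^*\ke_Z\otimes\ko_F(-H)$, the $j^*j_*$ spectral sequence combined with $Rp_*\ko_F(-kH)=0$ for $1\le k\le 3$ on the $\PP^3$-bundle $F\to Z$ (giving full faithfulness of every $\Phi_l$ at once, since the twist cancels), and the semiorthogonality computations via $R\pi_*\ko_Y(-H)=R\pi_*\ko_Y(-2H)=0$ and $Rp_*\ko_F(-(i+1)H)=0$ for $i=0,1,2$ are all sound; you are right that the fibre-dimension jump over $Z$ does not spoil the pushforward vanishings, since the cohomology of $\ko(-1)$ and $\ko(-2)$ vanishes on both $\PP^2$ and $\PP^3$.

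The genuine gap is generation, which you correctly flag as the crux but do not close, and neither of your two reserve strategies is yet a proof. For the d\'evissage: once $G\in\mathcal{D}^\perp$ is shown to be supported on $F$, the clean reformulation is that $D^b_F(Y)$ is generated by $j_*\Db(F)=\langle \Phi_{-1}\Db(Z),\Phi_0\Db(Z),\Phi_1\Db(Z),\Phi_2\Db(Z)\rangle$, so it suffices to show $\Phi_l\Db(Z)\subset\mathcal{D}$ for $l=0,1,2$. The natural tool — the Koszul resolution of $j_*\ko_F$ by $\Lambda^\bullet(\pi^*\ke^\vee\otimes\ko_Y(H))$ together with the projection formula — only applies to objects of the form $p^*\iota^*B$ (not to arbitrary $A\in\Db(Z)$), and even there it produces the unwanted term $\pi^*(B\otimes\det\ke^\vee)\otimes\ko_Y(3H)$, which lies in none of the four blocks; so a further mutation or a different resolution is needed, and this is precisely the non-formal content of the proposition. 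For the blow-up alternative: Orlov's formulas give you a decomposition of $\Db(\mathrm{Bl}_F Y)$, but ``descending along $\sigma$'' is not an operation — one must compare that decomposition with the one induced by $\sigma$ itself and perform explicit mutations to isolate $\sigma^*\Db(Y)$, which again is where the actual work (done in \cite{nested}) lives. As stated, your argument establishes admissibility and semiorthogonality of the four blocks but not that they exhaust $\Db(Y)$.
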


\subsection{S3: bisymplectic Grassmannian $\Ml(3,8)$ and Debarre-Voisin IHS} The variety $\Ml(k,n)$ is given by the vanishing of a global section of the bundle $\W^2 \mR^{\vee} \oplus \W^2 \mR^{\vee}$ on the Grassmannian $\Gr(k,n)$. Equivalently, given a pencil $\lambda: \C^2 \to \W^2 V_n^{\vee}$ it parametrises k-dimensional subspaces isotropic for all skew-forms in the pencil. In \cite{kuznetsovpicard} this variety is studied by Kuznetsov when $k=n/2$ and by Benedetti in \cite{ben18} with a strong emphasis in the case $k=2$.  Let us recall some key facts of the construction.  Assume that $n=2m$ is of even dimension. To a general pencil $\lambda$ are canonically associated $m$ degenerate skew-forms $\lbrace \lambda_1, \ldots, \lambda_m \rbrace$, given by the intersection bewteen the line $L_{\lambda} \subset \PP(\W^2 V^{\vee})$ and the (Pfaffian) discriminant hypersurface $D$, corresponding to degenerate skew-forms. Denote by $K_i$ the kernel of $\lambda_i$. The smoothness of $\Ml$ is equivalent to the $\lambda_i$ being distinct, and moreover we can decompose $V=K_1 \oplus \ldots \oplus K_m$ as a direct sum.\\
Kuznetsov gives as well the canonical form for the pencil, espressing the two generators (up to dividing by 2) as $$\omega_1= x_{1,2}+x_{3,4}+\ldots +x_{n-1,n}, \ \ \omega_2=  a_1x_{1,2}+a_2x_{3,4}+\ldots +a_m x_{n-1,n},$$ with the $a_i$ pairwise distinct, and $x_{i,j}:= x_i \wedge x_j$. This way, we can identify $K_1:=\langle e_1, e_2 \rangle$, $K_2:=\langle e_3, e_4 \rangle$ and so on. When $m=k$ one has $\Ml(k, 2k) \cong \prod (\PP^1)^k$, see the theorem already recalled in \ref{kuzzolo}. When $m\neq k$ however we do not have such a nice description as a product. For $k=2$ for example $\Ml(2,n)$ is an intersection of $\Gr(2,n)$ with a linear subspace of codimension 2. \\
Let us now focus on the case $\Ml(3,8)$. We compute first the cohomology of a linear section of $\Ml(3,8)$.

\begin{proposition} \label{ah}A linear section $X_1=V(\sigma_1) \subset \Ml(3,8)$ is of K3 type.
\end{proposition}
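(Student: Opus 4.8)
The plan is to realise $X_1$ as the zero locus in $\Gr(3,8)$ of a general global section of the globally generated homogeneous bundle $\F = \W^2\mR^\vee \oplus \W^2\mR^\vee \oplus \of_G(1)$, where $\mR$ is the rank-$3$ tautological bundle. A Bertini argument then gives a smooth $X_1$ of dimension $15-7=8$, and adjunction yields $K_{X_1}=(K_{\Gr(3,8)}+\det\F)|_{X_1}=\of_{X_1}(-8+5)=\of_{X_1}(-3)$, so $X_1$ is Fano of index $3$ and Picard rank $1$, matching the row S4 of Table \ref{table}. Being of K3 type with the structure in degree $8$ amounts exactly to the four assertions
$$h^{8,0}(X_1)=h^{7,1}(X_1)=h^{6,2}(X_1)=0,\qquad h^{5,3}(X_1)=1,$$
the value of $h^{4,4}$ being irrelevant; these are the numbers I would compute.

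First I would localise the problem to the middle degree. Resolving $\of_{\Ml(3,8)}$ by the Koszul complex of $\W^2\mR^\vee\oplus\W^2\mR^\vee$ and feeding the terms into Borel--Bott--Weil on $\Gr(3,8)$, I expect to check that the $9$-fold $\Ml(3,8)$ is \emph{central}, i.e. $h^{p,q}(\Ml(3,8))=0$ for $p\neq q$ (in particular all its odd cohomology vanishes). Granting this, the Lefschetz hyperplane theorem applied to $X_1\subset\Ml(3,8)$ shows that $H^k(X_1)$ is of Hodge--Tate type for every $k\neq 8$, and that the only possibly non-central part of $H^8(X_1)$ is the vanishing cohomology $H^8_{\van}(X_1)$. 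Thus any K3 structure is forced to live in $H^8_{\van}(X_1)$, and I only have to pin down its Hodge numbers.

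For the middle degree I would run the same two-sequence computation as in Proposition \ref{gq1}: combine the conormal sequence
$$0\to\F^\vee|_{X_1}\to\Omega^1_{\Gr}|_{X_1}\to\Omega^1_{X_1}\to 0$$
and its wedge-power filtrations with the Koszul resolution of $\of_{X_1}$ on $\Gr(3,8)$, so as to express each $H^q(X_1,\Omega^p_{X_1})$ in terms of cohomology groups of the shape
$$H^\bullet\bigl(\Gr(3,8),\,\Omega^a_{\Gr}\otimes\W^b\F^\vee\bigr).$$
After decomposing these into irreducible $\GL_3\times\GL_5$-homogeneous bundles via the Cauchy/Littlewood--Richardson rules, each summand is handled by Bott's theorem: one checks that all Weyl-shifted weights relevant to $p+q=8$ with $p\ge 6$ are singular, yielding the three vanishings, while exactly one regular weight survives for $p=5$, giving $h^{5,3}(X_1)=1$. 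Equivalently, since centrality already fixes the off-middle Hodge numbers, it would suffice to compute the Euler characteristics $\chi(\Omega^p_{X_1})$ for $p=5,6,7,8$ with the Schubert2/Macaulay2 routine used elsewhere in the paper, exactly as for $X_{M7}$ in Lemma \ref{m7}.

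The hard part will be the bookkeeping in the Bott step: the wedge powers $\W^b\F^\vee$ tensored with $\Omega^a_{\Gr}$ break into a large number of irreducible homogeneous bundles, and for each one must decide regularity of the shifted weight. The two delicate points are ruling out a stray contribution to $h^{6,2}(X_1)$ — the first Hodge number whose vanishing is not automatic from level considerations — and confirming that precisely one weight feeds $h^{5,3}(X_1)$. This is exactly the pattern predicted by Criterion \ref{num} (here $2t+1=9$, $t=4$ divides $\iota_{\Ml(3,8)}=4$, and $-\tfrac14 K_{\Ml(3,8)}$ is a hyperplane section), so the numerology both motivates the expected answer and guides which weights to track; a machine verification of the surviving Euler characteristics is the safest way to close the argument.
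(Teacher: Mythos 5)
Your proposal is correct and follows the same two-step architecture as the paper's proof of Proposition \ref{ah}: first establish that the ambient ninefold $\Ml(3,8)$ is central, then combine the Lefschetz hyperplane theorem with the Euler characteristics $\chi(\Omega^p_{X_1})$ (computed by Riemann--Roch, in practice with Macaulay2, exactly as in Lemma \ref{m7}) to pin down the row $p+q=8$. The one place where you diverge is the centrality of $\Ml(3,8)$: you propose the direct Koszul-plus-Borel--Bott--Weil computation on $\Gr(3,8)$, which the paper acknowledges is possible but replaces by a softer argument --- by \cite[Proposition 2.10]{ben18} a torus $(\C^*)^n$ acts on $\Ml(k,n)$ with exactly $2^k{n \choose k}$ isolated fixed points, so \cite[Theorem 2]{sommese} gives centrality at once, uniformly in $k$ and $n$, and delivers the topological Euler characteristic for free. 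Your BBW route would get there too, but at the cost of decomposing the wedge powers of $(\W^2\mR^\vee)^{\oplus 2}$ tensored with the $\Omega^a_{\Gr}$ --- precisely the bookkeeping you flag as the hard part --- whereas the fixed-point argument sidesteps it entirely; once centrality is in hand, your fallback of reading the middle Hodge numbers off the $\chi(\Omega^p_{X_1})$ is exactly what the paper does.
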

\begin{proof} 
The first thing to prove is that $\Ml(3,8)$ is a central variety. This can be done via a direct computation, for example using Borel-Bott-Weil theorem. There is however another (much easier) argument which is however more conceptual, and prove the similar statement for all $\Ml(k,n)$. Indeed in \cite[Proposition 2.10]{ben18} it is proved that there is a torus $T \cong (\C^*)^n$ acting on $\Ml(k,n)$ with the fixed locus constituted only by $2^k {n \choose k}$ points. This implies, thanks to \cite[Theorem 2]{sommese} that the $\Ml(k,n)$ is a central variety, with $2^k {n \choose k}$ being its topological Euler characteristic.\\
Lefschetz theorem on hyperplane section enables us to describe the cohomology of $Z$ except all the Hodge groups $h^{p,q}(Z)$ with $p+q=8$. We can determine these dimensions by computing the Euler characteristics of $\chi(\Omega_X^i)$. The latter can be computed via a direct but lengthy computation, and computer algebra systems as Macaulay2 can speed up everything. One has in particular \begin{align*}&\chi(\Omega^1_X)=\chi(\Omega^1_{\Ml(3,8)})=1\\
&\chi(\Omega^2_X)=\chi(\Omega^2_{\Ml(3,8)})=2\\
&\chi(\Omega^3_X)=\chi(\Omega^3_{\Ml(3,8)})+1=7\\
&\chi(\Omega^4_X)=26.
\end{align*}
\end{proof}
This gives as well all the Hodge numbers. We collect them in the next corollary for the reader's convenience.
\begin{corollary}The only non-zero Hodge numbers $h^{p,q}$ of $\Ml(3,8)$ are $$h^{0,0}=h^{9,9}=1, \ h^{1,1}=h^{8,8}=1, \ h^{2,2}=h^{7,7}=2, \ h^{3,3}=h^{6,6}=6, \ h^{4,4}=h^{5,5}=6.$$
\end{corollary}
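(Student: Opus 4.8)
The plan is to obtain all the Hodge numbers as an immediate bookkeeping consequence of the two facts already established in the proof of Proposition~\ref{ah}: that $\Ml(3,8)$ is \emph{central}, and that its torus fixed locus is a finite set of points. Centrality is the essential structural input: as shown there, the torus $T\cong(\C^*)^8$ acts with isolated fixed points, so by the cited theorem of Sommese $h^{p,q}(\Ml(3,8))=0$ whenever $p\neq q$, and in particular all odd cohomology vanishes. Thus the only candidates for non-zero Hodge numbers lie on the diagonal, and it suffices to determine $h^{p,p}$ for $p=0,\dots,9$.

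First I would halve the problem with Poincaré duality on the smooth projective $9$-fold $\Ml(3,8)$, which gives $h^{p,p}=h^{9-p,9-p}$; hence only $h^{0,0},h^{1,1},h^{2,2},h^{3,3},h^{4,4}$ remain to be found. Here $h^{0,0}=1$ by connectedness. For $1\le p\le 3$ centrality collapses the holomorphic Euler characteristic to a single Hodge number, since $\chi(\Omega^p)=\sum_q(-1)^q h^{p,q}=(-1)^p h^{p,p}$ and $h^{p,p}\ge 0$, so $h^{p,p}=\lvert\chi(\Omega^p)\rvert$. The values $\chi(\Omega^p_{\Ml(3,8)})=1,2,6$ for $p=1,2,3$ recorded in the proof of Proposition~\ref{ah} then yield at once $h^{1,1}=1$, $h^{2,2}=2$ and $h^{3,3}=6$.

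The remaining entry $h^{4,4}$ I would pin down not through a further $\chi(\Omega^4)$ computation but from the topological Euler characteristic supplied by the fixed-point count, namely $e(\Ml(3,8))=2^{3}\binom{4}{3}=32$ (the coordinate $3$-planes containing no symplectic pair of the canonical pencil). Since the variety is central, $e(\Ml(3,8))=\sum_p h^{p,p}=2\big(h^{0,0}+h^{1,1}+h^{2,2}+h^{3,3}+h^{4,4}\big)$, and substituting the values already obtained gives $32=2(1+1+2+6+h^{4,4})$, whence $h^{4,4}=6$. Poincaré duality then fills in $h^{5,5}=\dots=h^{9,9}$ symmetrically, completing the list.

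No genuine obstacle remains, since the two hard ingredients—centrality and the finiteness of the torus fixed locus—are exactly what Proposition~\ref{ah} provides; the corollary is then pure arithmetic. The only points that need care are the sign convention $h^{p,p}=(-1)^p\chi(\Omega^p)$ when reading off the diagonal numbers, and checking that the fixed-point count used is $2^k\binom{n/2}{k}=32$ for $(k,n)=(3,8)$, so that the Euler-characteristic equation closes consistently with the previously computed $h^{p,p}$. As a sanity check one can verify independently, for instance by Borel--Bott--Weil, that $\chi(\Omega^4_{\Ml(3,8)})=6$, matching the value of $h^{4,4}$ obtained above.
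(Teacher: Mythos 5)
Your argument is correct and follows essentially the same route as the paper: centrality of $\Ml(3,8)$ via the torus action with isolated fixed points, combined with the Euler characteristics $\chi(\Omega^p)$ recorded in the proof of Proposition~\ref{ah}. The only (harmless) variation is that you pin down $h^{4,4}$ from the topological Euler characteristic $e(\Ml(3,8))=2^{3}\binom{4}{3}=32$ rather than from a further $\chi(\Omega^4)$ computation; note that this is the correct reading of the fixed-point formula (the binomial must be $\binom{n/2}{k}$, not $\binom{n}{k}$, exactly as your consistency check $2(1+1+2+6+6)=32$ confirms), and your explicit use of the sign $h^{p,p}=(-1)^p\chi(\Omega^p)$ is the right way to reconcile the values listed in Proposition~\ref{ah} with the stated Hodge numbers.
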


\begin{corollary} \label{hodges2}Suppose $p+q \neq 8$. The only non-zero Hodge numbers $h^{p,q}$ of $X$ are $$h^{0,0}=h^{8,8}=1, \ h^{1,1}=h^{7,7}=1, \ h^{2,2}=h^{6,6}=2, \ h^{3,3}=h^{5,5}=6.$$
For $p+q=8$ the only non-zero Hodge numbers are $$h^{3,5}=h^{5,3}=1, \ h^{4,4}=26,$$ with moreover $h^{5,5}_{\van}=20$.
\end{corollary}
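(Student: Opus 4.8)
The plan is to deduce Corollary \ref{hodges2} by reorganising, through Lefschetz theory and Poincar\'e duality, the data already produced in Proposition \ref{ah} and in the preceding corollary on $\Ml(3,8)$. Since $\Ml(3,8)$ has dimension $9$ and $X=V(\sigma_1)$ is a smooth Pl\"ucker-hyperplane section, $\dim X=8$, so the Lefschetz hyperplane theorem yields Hodge-structure isomorphisms $H^k(\Ml(3,8),\C)\cong H^k(X,\C)$ for all $k\le 7$. As $\Ml(3,8)$ is central its cohomology is concentrated on the diagonal and vanishes in odd degrees; reading off the Hodge numbers recorded above therefore gives at once $h^{0,0}=1$, $h^{1,1}=1$, $h^{2,2}=2$ and $h^{3,3}=6$ for $X$ in every weight $\le 7$.

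Next I would propagate these values into the upper half of the cohomology by Poincar\'e duality on the smooth projective $8$-fold $X$, in the form $h^{p,q}(X)=h^{8-p,8-q}(X)$. This produces $h^{5,5}=h^{3,3}=6$, $h^{6,6}=h^{2,2}=2$, $h^{7,7}=h^{1,1}=1$ and $h^{8,8}=h^{0,0}=1$, and in particular shows that every group with $p+q\neq 8$ is purely diagonal and that the odd cohomology of $X$ vanishes, exactly as claimed.

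The only genuinely new content concerns the middle group $H^8(X)$. Here I would invoke Proposition \ref{ah}: it says that $X$ is of K3 type with the structure located in weight $8$, so that the vanishing part $H^8_{\van}(X)$ carries a weight-$8$ Hodge structure of K3 type. This forces the nonzero weight-$8$ Hodge numbers to lie only at $(5,3)$, $(4,4)$ and $(3,5)$, with $h^{5,3}=h^{3,5}=1$ coming from the extremal line of the K3-type structure, while the complementary ambient part, i.e. the injective image of $H^8(\Ml(3,8))\hookrightarrow H^8(X)$, is of pure type $(4,4)$ and of dimension $h^{4,4}(\Ml(3,8))=6$. This fixes the whole shape of $H^8(X)$ except for the single integer $h^{4,4}$. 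To pin it down I would use the Euler characteristic $\chi(\Omega^4_X)=26$ computed in Proposition \ref{ah}: in the alternating sum $\chi(\Omega^4_X)=\sum_{q}(-1)^q h^{4,q}(X)$ every summand with $q\neq 4$ sits in a weight different from $8$ and is off-diagonal, hence vanishes by the first two paragraphs, so the sum collapses to $h^{4,4}(X)=26$. Subtracting the ambient contribution $6$ then yields the vanishing number $h^{4,4}_{\van}(X)=20$.

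No step here is a serious obstacle, since the actual computations were already carried out in Proposition \ref{ah}; the one point requiring care is the collapse of the alternating sum for $\chi(\Omega^4_X)$, where one must also note that $h^{4,3}$ and $h^{4,5}$ lie in the odd weights $7$ and $9$ and hence vanish by the vanishing of the odd cohomology established above. Thus the corollary is in the end a bookkeeping consequence of the earlier results, packaging the K3-type statement and the Euler-characteristic data into the complete Hodge diamond.
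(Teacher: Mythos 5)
Your proof is correct and follows essentially the same route the paper intends: the paper's argument for this corollary is contained in the proof of Proposition \ref{ah} (Lefschetz plus the centrality of $\Ml(3,8)$ for $p+q\neq 8$, Poincar\'e/Hard Lefschetz duality for the upper half, and the Euler characteristics $\chi(\Omega^i_X)$ to pin down the middle weight), which is exactly your bookkeeping. The only remark worth adding is that the stated $h^{5,5}_{\van}=20$ is evidently a typo for $h^{4,4}_{\van}=20$, which your computation $26-6=20$ correctly identifies.
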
 
We want now to associate to our Fano of K3 type $X$ an IHS $Z$. To do this, at first notice that $\Ml(3,8)$ is degenerate in the Pl\"ucker embedding in $\PP(\W^3 V_8)$. It lies indeed in $\PP(U)$, where 
$$U:= \ker (\varphi: \W^3 V_8 \stackrel{(\contr_1, \contr_2)}{\longrightarrow} V_8 \oplus V_8),$$ where $\contr_i$ denotes the contraction with the 2-skew form $\omega_i$. Equivalently, we have that $\Ml(3,8)$ is defined by a general  $\sigma_1 \in U^{\vee}$.\\
Consider now the Grassmannian $\Gr(6,8)$. Denote by $\bar{\contr_i}$ the contraction with the restriction of the two form $\omega_i|_W$ to a 6-space $W$. For the generic $[W] \in \Gr(6,8)$ the map $$\overline{\varphi}: \W^3 W \stackrel{(\bar{\contr_1}, \bar{\contr_2})}{\longrightarrow} W \oplus W$$ remains surjective, since the rank of  $\omega_1|_W$ and $\omega_2|_W$ is still maximal. 
However, when $W$ is such that every element of the pencil restricted to such $W$ has rank 4, then the above map is not surjective anymore. 
As a special example, one can take a subspace given by $x_1=x_3=0$. Then for example the vector $(e_5, d e_5)$, $d \neq 1$ is not in the image of $\overline{\varphi}$.
To identify in general the locus $D$ where $\overline{\varphi}$ is not surjective let us write (in the notation above) $V_8=K_1 \oplus K_2 \oplus K_3 \oplus K_4$. We can then describe $D$ as $$D:= \lbrace W_6 \subset V_8 \ | \ \ddim (W_6 \cap K_i) \geq 1, \ \forall i \rbrace.$$
$D$ is therefore isomorphic to a $\Gr(2,4)$-bundle over $(\PP^1)^4 \cong \Ml(4,8)$, and over it we have a cokernel sheaf $\mathcal{G}$ of rank 4 on its support, given by the Kernel of the rank 4 map $W \to W^* \oplus W^*$. Summing up, we have the following result
\begin{proposition} On $\Gr(6,8)$ there is an exact sequence of sheaves
$$ 0 \to F \to \W^3 \mR \to \mR \oplus \mR \to \mathcal{G} \to 0.$$
\end{proposition}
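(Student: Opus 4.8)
The plan is to build the four-term complex on $\Gr(6,8)$ explicitly from the maps already introduced, namely the restricted contractions $\bar{\contr}_1,\bar{\contr}_2$ defining $\overline{\varphi}\colon \W^3\mR \to \mR\oplus\mR$, and then to identify its kernel with $F$ and its cokernel with $\mathcal{G}$ by a careful rank/stratification analysis. First I would make precise the sheaf-level version of the pointwise map: over $\Gr(6,8)$ the tautological bundle $\mR$ has fibre $W$ at $[W]$, and contraction with $\omega_i$ restricted to $W$ gives a bundle map $\W^3\mR \to \mR$ (using $W^*\cong W$ is not needed — the contraction $\W^3 W \to \W^1 W = W$ lands in $\mR$ directly after using the form to identify $W^\vee$ with a quotient, so I would be careful to state the target as $\mR$ exactly as written). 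Combining the two forms gives $\overline{\varphi}\colon \W^3\mR \to \mR\oplus\mR$, and I define $F:=\ker\overline{\varphi}$ and $\mathcal{G}:=\Cok\overline{\varphi}$ at the level of coherent sheaves.

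The core of the argument is then the exactness claim
\[
0 \to F \to \W^3\mR \to \mR\oplus\mR \to \mathcal{G} \to 0,
\]
which by construction of $F$ and $\mathcal{G}$ as kernel and cokernel is automatic as a complex; the only genuine content is that $F$ is locally free (a subbundle) and that $\mathcal{G}$ is supported exactly on the degeneracy locus $D$ with the stated structure. I would argue local freeness of $F$ by showing that $\overline{\varphi}$ has constant rank on the open complement of $D$ and then invoking that $\ker$ of a map of vector bundles with locally constant rank on a stratification fits into such a sequence; more robustly, since the target $\mR\oplus\mR$ is a bundle, $F=\ker\overline{\varphi}$ is the kernel of a bundle map and is reflexive, and on the smooth ambient $\Gr(6,8)$ I would check that the rank-drop locus $D$ has codimension at least $2$, forcing $F$ to be a subbundle (hence locally free) by the standard fact that a reflexive kernel sheaf on a smooth variety is locally free away from a codimension-$\ge 2$ set and the kernel of a bundle surjection-onto-its-image is a subbundle when the image is a subbundle).

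The key geometric input, already supplied in the excerpt, is the explicit description
\[
D = \{\, W_6\subset V_8 \mid \ddim(W_6\cap K_i)\ge 1,\ \forall i \,\},
\]
together with its identification as a $\Gr(2,4)$-bundle over $(\PP^1)^4\cong\Ml(4,8)$ and the rank-$4$ cokernel sheaf $\mathcal{G}$ living on it. I would verify the codimension of $D$ from this bundle description: the base $(\PP^1)^4$ has dimension $4$ and the $\Gr(2,4)$ fibre has dimension $4$, so $\dim D = 8$, whereas $\dim\Gr(6,8)=12$, giving $\codim D = 4 \ge 2$ as needed, and simultaneously confirming $\mathcal{G}$ is a torsion sheaf supported in the expected codimension. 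The rank computation at a generic point of $D$ — that every pencil element restricts to rank $4$ on such $W_6$ so that $\overline{\varphi}$ drops rank by exactly $4$, yielding $\rk\mathcal{G}=4$ on its support — would be checked on the normal-form example $x_1=x_3=0$ flagged in the text, where the concrete vector $(e_5, d\,e_5)$ with $d\ne 1$ exhibits the cokernel.

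The main obstacle I anticipate is the local-freeness of $F$ and the precise bookkeeping of ranks across the degeneracy locus: one must confirm that $\overline{\varphi}$ is generically surjective (so that $\mathcal{G}$ is genuinely torsion and $F$ has the expected generic rank $\binom{6}{3}-2\cdot 6 = 20-12 = 8$), and that the rank of $\overline{\varphi}$ drops in a controlled way only on $D$ with no unexpected deeper strata spoiling reflexivity-to-locally-free passage. The cleanest route is to trivialise everything in the canonical basis $V_8=K_1\oplus K_2\oplus K_3\oplus K_4$ with the diagonalised forms $\omega_1,\omega_2$ from Kuznetsov's normal form, reduce the surjectivity and kernel-rank questions to linear algebra on each fibre, and then globalise; the honest work is entirely in that fibrewise linear algebra, which I would not expand here.
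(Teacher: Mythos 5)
Your proposal is correct and follows essentially the same route as the paper: there the proposition is stated as a summary of the preceding discussion, with $F$ and $\mathcal{G}$ taken to be the kernel and cokernel of the contraction map $\overline{\varphi}\colon \W^3\mR\to\mR\oplus\mR$, so that exactness of the four-term sequence is tautological, and the surrounding text supplies exactly the inputs you list (generic surjectivity of $\overline{\varphi}$, the description of the degeneracy locus $D$ as a $\Gr(2,4)$-bundle over $(\PP^1)^4$ of codimension $4$, and the rank-$4$ cokernel on $D$). One caveat: the local freeness of $F$, which you single out as the main content, is not actually asserted by the proposition (it only becomes relevant for the subsequent corollary on $F^\vee$), and your argument for it is not sound as stated --- a reflexive sheaf on a smooth variety of dimension $>2$ need not be locally free even though it is locally free outside a closed set of codimension $\geq 3$, and here the image of $\overline{\varphi}$ is genuinely not a subbundle along $D$, so the ``reflexive plus $\codim D\geq 2$'' reduction does not close; establishing that $F$ is a vector bundle would require a separate fibrewise argument along $D$, which neither you nor the paper carries out, but this does not affect the correctness of the proof of the stated exact sequence.
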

\begin{corollary} $F^\vee$ is a globally generated vector bundle of rank 8 and $H^0(F^{\vee})= U^{\vee}$.
\end{corollary}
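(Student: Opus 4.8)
The plan is to obtain the corollary by dualizing the four-term exact sequence of the preceding proposition. Write $\mathcal{I}=\im(\overline{\varphi})\subset\mR\oplus\mR$ for the image of the contraction map $\overline{\varphi}\colon\W^3\mR\to\mR\oplus\mR$, so that the sequence splits into two short exact sequences $0\to F\to\W^3\mR\to\mathcal{I}\to 0$ and $0\to\mathcal{I}\to\mR\oplus\mR\to\mathcal{G}\to 0$. The only global input I need about the singular behaviour is that $\mathcal{G}$ is supported on the degeneracy locus $D$, which by the description above is a $\Gr(2,4)$-bundle over $(\PP^1)^4$ and hence has dimension $8$, i.e. codimension $4$ in $\Gr(6,8)$. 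Consequently $\mathcal{E}xt^i(\mathcal{G},\of)=0$ for all $i\le 3$.

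First I would dualize the second short exact sequence. Applying $\mathcal{H}om(-,\of)$ and using $\mathcal{H}om(\mathcal{G},\of)=\mathcal{E}xt^1(\mathcal{G},\of)=0$ together with the local freeness of $\mR\oplus\mR$, one gets $\mathcal{I}^{\vee}\cong\mR^{\vee}\oplus\mR^{\vee}$, while the connecting map identifies $\mathcal{E}xt^1(\mathcal{I},\of)$ with $\mathcal{E}xt^2(\mathcal{G},\of)=0$. Dualizing now the first short exact sequence and feeding in these two vanishings yields the presentation
\[
0\to\mR^{\vee}\oplus\mR^{\vee}\xrightarrow{\overline{\varphi}^{\vee}}\W^3\mR^{\vee}\to F^{\vee}\to 0,
\]
where the left-hand map is exactly the transpose of the contraction $\overline{\varphi}$. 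Thus $F^{\vee}$ is realised as the cokernel of a map between the globally generated bundles $\mR^{\vee}\oplus\mR^{\vee}$ and $\W^3\mR^{\vee}$, of generic rank $20-12=8$.

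From this presentation the two substantive assertions follow quickly. For global generation, $\mR^{\vee}$ is a quotient of the trivial bundle $V_8^{\vee}\otimes\of$, hence globally generated, so $\W^3\mR^{\vee}$ is globally generated and therefore so is its quotient $F^{\vee}$. For the space of sections I would take the long exact cohomology sequence of the presentation. Borel--Bott--Weil on $\Gr(6,8)$ gives $H^0(\mR^{\vee})=V_8^{\vee}$ with $H^1(\mR^{\vee})=0$ and $H^0(\W^3\mR^{\vee})=\W^3 V_8^{\vee}$; moreover the map induced on $H^0$ by $\overline{\varphi}^{\vee}$ is precisely the transpose $\varphi^{\vee}\colon V_8^{\vee}\oplus V_8^{\vee}\to\W^3 V_8^{\vee}$ of the global contraction, since $\overline{\varphi}$ is the restriction of $\varphi$. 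As $H^1(\mR^{\vee}\oplus\mR^{\vee})=0$, one obtains $H^0(F^{\vee})=\W^3 V_8^{\vee}/\im(\varphi^{\vee})=U^{\vee}$, using $\im(\varphi^{\vee})=U^{\perp}$ and $\W^3 V_8^{\vee}/U^{\perp}\cong U^{\vee}$.

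The step I expect to be the main obstacle is the claim that $F^{\vee}$ is an honest \emph{vector bundle} of rank $8$, as opposed to a globally generated sheaf of generic rank $8$. By the presentation, $F^{\vee}$ is locally free exactly where $\overline{\varphi}^{\vee}$ is a subbundle inclusion, i.e. where $\overline{\varphi}$ is surjective, which is the complement of $D$; along $D$ the transpose drops rank and the cokernel fibre jumps. Establishing the clean bundle statement therefore requires genuine control of $F^{\vee}$ along $D$ — for instance verifying that the section cutting out $Z$ meets $D$ appropriately — and it is here that the $\Gr(2,4)$-bundle structure of $D$ and the precise shape of $\mathcal{G}$ must be used, since the purely homological input delivers only the generic rank.
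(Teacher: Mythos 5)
Your argument is sound and follows the same basic route as the paper, but it is considerably more complete. The paper's entire proof consists of asserting that dualizing produces a surjection $\W^3\mR^\vee\to F^\vee$ (surjective on stalks) and deducing global generation from that of $\W^3\mR^\vee$; it offers no justification for the surjectivity, no identification of the kernel, and no argument for the rank statement or for $H^0(F^\vee)=U^\vee$. Your derivation of the presentation
\[
0\to\mR^\vee\oplus\mR^\vee\xrightarrow{\ \overline{\varphi}^\vee\ }\W^3\mR^\vee\to F^\vee\to 0
\]
supplies exactly what is missing: the vanishings $\mathcal{E}xt^i(\mathcal{G},\mathcal{O})=0$ for $i\le 3$ follow from the general fact that sheaf $\mathcal{E}xt^i$ into $\mathcal{O}$ of a sheaf supported in codimension $c$ on a smooth variety vanishes for $i<c$ (no Cohen--Macaulay hypothesis is needed, only that $D$ has codimension $4$), and the cohomology sequence together with $H^1(\mR^\vee)=0$ and the identification of the induced map on sections with the transpose of the global contraction gives $H^0(F^\vee)=\W^3V_8^\vee/\im(\varphi^\vee)=U^\vee$.

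The worry you raise at the end is well founded, but it is an imprecision in the statement of the corollary rather than a gap you need to close. Since $F^\vee$ is the cokernel of $\overline{\varphi}^\vee$ and the rank of $\overline{\varphi}$ drops from $12$ to $8$ along $D$, the fibre of $F^\vee$ jumps from $8$ to $12$ over $D$: there is no way to "control $F^\vee$ along $D$" so as to make it locally free there, and you should not try. The correct reading is that $F^\vee$ is a globally generated reflexive sheaf of generic rank $8$, locally free precisely on $\Gr(6,8)\smallsetminus D$. This suffices for everything that follows: global generation forces the zero locus of a general section to miss the $8$-dimensional locus $D$ (over which the fibres are $12$-dimensional, so the expected dimension of the vanishing locus inside $D$ is negative), hence the zero locus is an honest fourfold cut out by a rank-$8$ bundle near each of its points, and the computation $c_1(F^\vee)=8h$ is already carried out in the paper via the torsion sheaf $\mathcal{G}$. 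So your proof is correct; simply replace the attempt to establish local freeness along $D$ by the observation that local freeness off the codimension-four locus $D$, plus global generation, is all that is used.
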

\begin{proof}
Dually, there is a surjective morphism of sheaves $\W^3 \mR^\vee \to F^\vee$, which is surjective on stalks. Hence, global sections of $F^\vee$ which are images of global sections of $\W^3 \mR^\vee$ are sufficient to generate stalks, so that $F^\vee$ is globally generated. 
\end{proof}
Moreover, since $\mathcal{G}$ is a torsion sheaf supported in codimension 4 we have the following corollary.
\begin{corollary} $c_1(F^{\vee})=8h.$
\end{corollary}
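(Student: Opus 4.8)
The plan is to read off $c_1(F^{\vee})$ directly from the four-term exact sequence of the preceding proposition, the only genuine input being the vanishing of $c_1(\mathcal{G})$.

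First I would use additivity of the first Chern class along exact sequences of coherent sheaves on the smooth variety $\Gr(6,8)$. Splitting
$$0 \to F \to \W^3 \mR \to \mR \oplus \mR \to \mathcal{G} \to 0$$
at the image sheaf $Q=\im(\W^3\mR \to \mR \oplus \mR)$ into two short exact sequences yields the alternating relation
$$c_1(F)=c_1(\W^3 \mR)-c_1(\mR \oplus \mR)+c_1(\mathcal{G}).$$
Here the term $c_1(\mathcal{G})$ vanishes: by the discussion preceding the corollary, $\mathcal{G}$ is a torsion sheaf supported on the degeneracy locus $D$, which has codimension $4$, and the first Chern class only detects the codimension-one part of a sheaf. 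This is the one place where the geometry of $D$ enters, and it is exactly why the statement is worth isolating.

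The remaining two terms are a splitting-principle computation. Writing $a_1,\dots,a_6$ for the Chern roots of the rank-$6$ bundle $\mR$, each root occurs in exactly $\binom{5}{2}$ of the weights $a_i+a_j+a_k$ of $\W^3 \mR$, so $c_1(\W^3 \mR)=\binom{5}{2}c_1(\mR)=10\,c_1(\mR)$, while $c_1(\mR \oplus \mR)=2\,c_1(\mR)$. Substituting gives $c_1(F)=8\,c_1(\mR)$, and dualizing, $c_1(F^{\vee})=-c_1(F)=8\,c_1(\mR^{\vee})=8h$, using the convention $h=c_1(\of_G(1))=c_1(\det\mR^{\vee})=c_1(\mR^{\vee})$.

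There is no serious obstacle beyond this bookkeeping. The only point needing care is that $\mathcal{G}$ is not locally free, so I am applying additivity of $c_1$ to a sequence of coherent rather than merely locally free sheaves; this is legitimate on the smooth $\Gr(6,8)$, but if one prefers to avoid it, one may simply restrict to the open complement of $D$, where the sequence reduces to a short exact sequence of vector bundles, compute $c_1(F^{\vee})$ there, and extend the equality over $D$ since a divisor class is determined away from a subset of codimension $\geq 2$.
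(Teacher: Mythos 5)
Your proposal is correct and follows exactly the argument the paper intends: the paper's entire justification is the remark that $\mathcal{G}$ is torsion supported in codimension $4$, so $c_1$ is computed from the remaining locally free terms of the four-term sequence, and your splitting-principle bookkeeping ($c_1(\W^3\mR)=10\,c_1(\mR)$, $c_1(\mR\oplus\mR)=2\,c_1(\mR)$, hence $c_1(F^\vee)=8h$) fills in precisely the omitted computation.
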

\begin{proposition} Let $Z \subset \Gr(6,8)$ defined by the zero locus of a general global section of the vector bundle $F^{\vee}$. Then $Z$ is a fourfold with canonical class $\omega_Z \cong \of_Z$.
\end{proposition}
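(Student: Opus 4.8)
The plan is to show that $Z=V(s)$ for a general section $s\in H^0(\Gr(6,8),F^\vee)$ is a smooth fourfold with trivial canonical class, by combining adjunction with the first Chern class computation already established in the preceding corollary. Since $F^\vee$ is a globally generated vector bundle of rank $8$ (by the corollary above) on the Grassmannian $\Gr(6,8)$, which has dimension $12$, a general section vanishes in the expected codimension $8$, so $\dim Z = 12-8 = 4$. First I would invoke a Bertini-type argument for globally generated bundles: because $F^\vee$ is globally generated, the general section is transverse to the zero section away from the locus where $F^\vee$ fails to be locally free; but the earlier exact sequence $0\to F\to \W^3\mR\to \mR\oplus\mR\to\mathcal{G}\to 0$ exhibits $F$ (hence $F^\vee$) as genuinely locally free (the cokernel sheaf $\mathcal{G}$ is the torsion quotient, not $F$ itself), so $F^\vee$ is a bundle everywhere and the general $Z$ is smooth of the expected dimension.

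Next I would apply the adjunction formula for the zero locus of a section of a vector bundle: for $Z=V(s)\subset G$ with $G=\Gr(6,8)$,
\begin{equation*}
\omega_Z \cong \left(\omega_G \otimes \det F^\vee\right)\big|_Z.
\end{equation*}
Here $\omega_G \cong \of_G(-\iota_G)$, where the index of $\Gr(6,8)=\Gr(6,2+6)$ equals $k+l = 6+2 = 8$, so $\omega_G\cong\of_G(-8)$, i.e. $c_1(\omega_G)=-8h$ with $h$ the Plücker class. On the other hand the preceding corollary gives $c_1(F^\vee)=8h$, so $\det F^\vee\cong\of_G(8)$ and
\begin{equation*}
\omega_Z \cong \of_G(-8)\big|_Z \otimes \of_G(8)\big|_Z \cong \of_Z.
\end{equation*}
This is the desired triviality of the canonical class.

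The one point that requires genuine care — and is the main obstacle — is the smoothness and expected-dimension claim, since $F^\vee$ arises as the dual of a kernel sheaf in a four-term sequence rather than as an obviously nice bundle. I would verify directly from the sequence $0\to F\to \W^3\mR\to \mR\oplus\mR\to\mathcal{G}\to 0$ that $F$ is a subsheaf of the locally free $\W^3\mR$ whose quotient $\W^3\mR/F\cong\im(\varphi)$ is torsion-free (being a subsheaf of the locally free $\mR\oplus\mR$), which forces $F$ to be reflexive, and then that $F$ is in fact locally free by checking it has constant rank $8$ — precisely the content of the corollary identifying $H^0(F^\vee)=U^\vee$ and $F^\vee$ as globally generated of rank $8$. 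With local freeness in hand, global generation yields transversality of the general section via the standard argument (the universal section is transverse to the zero section, and generic smoothness applies), so that $Z$ is smooth of dimension $4$. The Chern class input $c_1(F^\vee)=8h$, which relies on $\mathcal{G}$ being supported in codimension $4$ so that it does not contribute to $c_1$, has already been recorded, so once smoothness is settled the adjunction computation closes the argument.
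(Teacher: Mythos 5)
Your proof is correct and is precisely the argument the paper leaves implicit: the proposition is stated there without proof, as an immediate consequence of the two preceding corollaries ($F^{\vee}$ globally generated of rank $8$, and $c_1(F^{\vee})=8h$) via Bertini for globally generated bundles and adjunction with $\omega_{\Gr(6,8)}\cong\of_{\Gr(6,8)}(-8)$. One small caveat: your aside deducing local freeness of $F$ from the fact that $\mathcal{G}$ is torsion is not by itself a proof (the kernel of a map of bundles is only reflexive in general, and a reflexive sheaf can fail to be locally free in codimension $\geq 3$ --- here the degeneracy locus $D$ has codimension exactly $4$, and the fibre rank of the kernel does jump there); since you ultimately defer to the corollary asserting that $F^{\vee}$ is a vector bundle, this does not affect the argument, but the parenthetical justification should not be presented as a substitute for that corollary.
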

\begin{thm} \label{hk}Let $Z$ as above, and let $Z_{DV} \subset \Gr(6,10)$ the Debarre-Voisin IHS. Then $Z$ is isomorphic to $Z_{DV}$. Moreover, $Z$ can be interpreted as (the compactification of) the space of $\Ml(3,6) \cong (\PP^1)^3$ inside $X_1 \subset \Ml(3,8)$.
\end{thm}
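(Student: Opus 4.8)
The plan is to manufacture an explicit three-form on a ten-dimensional space out of the data $(\omega_1,\omega_2,\sigma_1)$ defining $X_1\subset\Ml(3,8)$, and to match $Z$ with the corresponding Debarre--Voisin fourfold through a graph correspondence. Set $V_{10}=V_8\oplus L$ with $L=\langle e,f\rangle$ the pencil space. Recall from the description $U^\vee=\W^3V_8^\vee/(\omega_1\wedge V_8^\vee+\omega_2\wedge V_8^\vee)$ that $\sigma_1$ lifts to a genuine three-form $\tilde\sigma_1\in\W^3V_8^\vee$, well defined up to $\omega_1\wedge\alpha+\omega_2\wedge\beta$. Using the splitting $\W^3V_{10}^\vee=\W^3V_8^\vee\oplus(\W^2V_8^\vee\otimes L^\vee)\oplus(V_8^\vee\otimes\W^2L^\vee)$, I would set
\[
\sigma:=\tilde\sigma_1+\omega_1\wedge e^\vee+\omega_2\wedge f^\vee\in\W^3V_{10}^\vee,
\]
so that the pencil occupies the middle summand and $\sigma_1$ the first. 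The ambiguity in $\tilde\sigma_1$ is absorbed by the automorphisms of $V_{10}$ fixing $V_8$ and translating $e,f$ by vectors of $V_8$, so the $\GL(V_{10})$-class of $\sigma$ depends only on $X_1$.

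The comparison map comes from graphs. For a six-plane $W_6\subset V_8$ and $p,q\in W_6^\vee$ consider $P_6=\{w+p(w)e+q(w)f:\ w\in W_6\}\subset V_{10}$. A direct expansion, using that $\W^3L=0$ and that $\omega_i,\tilde\sigma_1$ annihilate $e,f$, gives the key identity
\[
\sigma|_{\W^3P_6}=\tilde\sigma_1|_{\W^3W_6}+\omega_1|_{W_6}\wedge p+\omega_2|_{W_6}\wedge q\ \in\ \W^3W_6^\vee .
\]
Hence $P_6\in Z_{DV}(\sigma)$ exactly when $\tilde\sigma_1|_{\W^3W_6}$ lies in $\omega_1|_{W_6}\wedge W_6^\vee+\omega_2|_{W_6}\wedge W_6^\vee$, i.e. precisely when $\sigma_1|_{W_6}=0$, which is the defining condition $W_6\in Z$. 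For generic $W_6$ the map $(p,q)\mapsto\omega_1|_{W_6}\wedge p+\omega_2|_{W_6}\wedge q$ from $(W_6^\vee)^{\oplus 2}$ (dimension $12$) into $\W^3W_6^\vee$ is injective with image the non-primitive part (dimension $12$), so the solution $(p,q)$ is unique; this yields a morphism $Z\to Z_{DV}(\sigma)$, $W_6\mapsto P_6$. Conversely a generic $P_6\in Z_{DV}$ meets $L$ trivially, hence projects isomorphically onto a six-plane $W_6\subset V_8$ of which it is the graph, and the same identity forces $W_6\in Z$; this is the inverse. The two are mutually inverse on dense opens, so they define an isomorphism there.

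For the ``moreover'' I would invoke the Kuznetsov normal form recalled in the $\SGr(3,9)$ discussion. For generic $W_6$ the restricted pencil has three distinct degenerate members with two-dimensional kernels $\kappa_1,\kappa_2,\kappa_3$, giving $W_6=\kappa_1\oplus\kappa_2\oplus\kappa_3$, and by Theorem \ref{kuzzolo} one has $\Ml(3,6)_{W_6}=\PP(\kappa_1)\times\PP(\kappa_2)\times\PP(\kappa_3)\cong(\PP^1)^3$, with Plücker coordinate $\ell_1\wedge\ell_2\wedge\ell_3$. These decomposable isotropic vectors fill out $\kappa_1\wedge\kappa_2\wedge\kappa_3$, which is exactly the fibre $F_{W_6}$ of dimension $8$; equivalently the Segre embedding $(\PP^1)^3\hookrightarrow\PP(F_{W_6})=\PP^7$ is linearly non-degenerate. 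Since $\sigma_1$ restricts on $F_{W_6}$ to the $(1,1,1)$-form cutting out $\Ml(3,6)_{W_6}\cap X_1$, the copy $\Ml(3,6)_{W_6}$ lies in $X_1$ iff $\sigma_1$ kills a spanning set of $F_{W_6}$, i.e. iff $\sigma_1|_{F_{W_6}}=0$, i.e. iff $W_6\in Z$. As each $(\PP^1)^3$ recovers its factors $\kappa_j$ and hence $W_6$, this identifies $Z$ with the (compactified) family of such $\Ml(3,6)$'s inside $X_1$.

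The main obstacle I expect is upgrading the birational bijection of the second paragraph to a global isomorphism and controlling the boundary. One must check that for general input $\sigma$ is general enough for $Z_{DV}(\sigma)$ to be a smooth IHS fourfold of $K3^{[2]}$-type (a dimension count matching the moduli of $(\omega_1,\omega_2,\sigma_1)$ modulo symmetries against three-forms modulo $\GL(V_{10})$, or simply the smoothness of $Z$ already established together with $\omega_Z\cong\of_Z$), and that the locus of $Z$ where the restricted pencil degenerates, so that $(p,q)$ ceases to be unique, has codimension at least two. Granting this, the graph morphism is defined in codimension one into a smooth variety with trivial canonical class, hence extends to a morphism $Z\to Z_{DV}$ admitting an inverse built the same way; a birational morphism of smooth projective fourfolds with an explicit inverse is an isomorphism. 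I would expect the compactification subtlety---matching the degenerate $W_6$ (where $\Ml(3,6)_{W_6}$ breaks) with the planes $P_6$ meeting $L$---to be the only genuinely delicate point.
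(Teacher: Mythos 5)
Your route runs in the opposite direction from the paper's and is built on a different mechanism. The paper starts from a general $\omega\in\W^3V_{10}^\vee$, decomposes it as $\omega=\omega_8+v^\vee\wedge\sigma_1+w^\vee\wedge\sigma_2$, and studies the linear projection $\Gr(6,10)\dashrightarrow\Gr(6,8)$: six-spaces are sorted by their intersection with $V_2$, type-2 spaces are excluded from $Z_{DV}$ by genericity, type-1 spaces form a smooth degree-$132$ curve (a Schubert cycle), and the blow-up of $Z_{DV}$ along that curve maps onto $Z$; degree one is then checked by a separate case analysis of pairs of six-spaces with the same projection. Your graph correspondence with the identity $\sigma|_{\W^3P_6}=\tilde\sigma_1|_{\W^3 W_6}+\omega_1|_{W_6}\wedge p+\omega_2|_{W_6}\wedge q$ is correct and is in some ways tidier: it simultaneously shows that fibres of the projection over $Z$ meet $Z_{DV}$ in exactly one point of type 0 (replacing the paper's two-case degree computation by the injectivity of a single linear map) and makes transparent why $W_6\in Z$ is the solvability condition. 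Your argument for the ``moreover'' part, via the decomposition $W_6=\kappa_1\oplus\kappa_2\oplus\kappa_3$ and the identification $F_{W_6}=\kappa_1\wedge\kappa_2\wedge\kappa_3$ with the linear span of the Segre, is also correct and slightly more explicit than the paper's. What both write-ups leave implicit, and what you correctly flag, is that a general $\sigma$ admits such a decomposition with vanishing $V_8^\vee\otimes\W^2L^\vee$-component (this amounts to choosing $V_2$ on the eight-dimensional variety $\TT(2,10)$ attached to $\sigma$), so that the $Z$'s really do hit general Debarre--Voisin fourfolds.

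The genuine gap is in your final step. ``The graph morphism is defined in codimension one into a smooth variety with trivial canonical class, hence extends to a morphism'' is not a valid principle: any rational map from a normal projective variety to a projective variety is automatically defined outside a codimension-two locus, and this does not force an extension -- the Mukai flop of an IHS fourfold along a $\PP^2$ is exactly a birational map between smooth projective fourfolds with trivial canonical class that does not extend to a morphism. For the same reason your threshold ``codimension at least two'' for the degeneracy locus is one short of what is needed. The paper's argument is the one you want: since $\omega_Z\cong\of_Z$ and $Z_{DV}$ is IHS, the birational map is an isomorphism in codimension one, and if it were not biregular it would be a flop, whose indeterminacy locus on an IHS fourfold has codimension exactly two; but the explicit description shows the map is defined and bijective outside a locus of codimension at least three (on the $Z_{DV}$ side, the type-1 curve; on the $Z$ side, the locus where $(p,q)\mapsto\omega_1|_{W_6}\wedge p+\omega_2|_{W_6}\wedge q$ fails to be injective), so no flop can occur and the map is an isomorphism. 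To close your proof you must therefore verify codimension $\geq 3$, not $\geq 2$, for these bad loci, and then invoke the flop argument rather than an extension principle.
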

\begin{proof}
With a non canonical choice of a two-space $\langle v,w\rangle = V_2\subset V_{10}$, the three form $\omega$ defining $Z_{DV}$ can be written as $\omega=\omega_8+v^\vee\wedge \sigma_1+w^\vee\wedge \sigma_2$, where $\omega_8$ is a three form on an eight dimensional vector space $V_8$ and $\sigma_i$ are two forms on the same space. The natural projection from $\mathbb{P}(V_{10})$ to $\mathbb{P}(V_8)$ induces a rational map from $\Gr(6,10)$ to $\Gr(6,8)$. For this map, there are three kinds of six-spaces:
\begin{itemize}
\item[Type 0] Six spaces which do not intersect the fixed two space $V_2$.
\item[Type 1] Six spaces meeting the fixed $V_2$ in a line $U_1$.
\item[Type 2] Six spaces containing the fixed $V_2$.
\end{itemize}
By a dimension count and the genericity assumption on $Z_{DV}$, spaces of type 2 do not occur inside $Z_{DV}$. Spaces of type 1 are given by the Schubert cycle $\sigma_{3,0^5}(V_2)$, and inside $Z_{DV}$ this is a curve of degree $132$, which is smooth since one can check that the Schubert cycle we use to obtain it is smooth as well.  The blow up of $Z_{DV}$ along this curve maps into a subvariety of $\Gr(6,8)$ given by six planes where the three form $\omega_8$ is given as the sum of $\sigma_1$ and $\sigma_2$ wedged with the dual of some vectors of the six space itself. This is precisely the variety $Z$ for the forms $\omega_8,\sigma_1,\sigma_2$.
The local picture in the exceptional divisor is given by sending a six plane $U_1\subset U_6$ to the set of all possible six planes in $V_8$ containing $U_6/U_1$, which is a $\mathbb{P}^2$.
The image $\pi(U_6)$ of a six space $U_6\in Z_{DV}$ contains three spaces parametrized by $X_1 \subset \Ml(3,8)$ where the form $\omega_8$ restricts to zero, hence also the two forms $\sigma_1,\sigma_2$ are zero. That is, a point of $Z$ parametrizes a copy $\Ml(3,6)\cong (\mathbb{P}^1)^3$ contained in $X_1$ as claimed above.
    We proved that $Z$ has trivial canonical bundle and, if the rational map we defined above from $Z_{DV}$ has degree one, $Z$ and $Z_{DV}$ would be birational minimal models, hence the map given by the blow-up of $Z_{DV}$ along the curve composed with the projection would be a flop. But a flop is not defined in codimension at most two on an IHS fourfold, hence the map was already well defined and is an isomorphism. Let us prove that this map has indeed degree one: Let $V_6$ and $W_6$ be two points of $Z_{DV}$ with the same projection. Therefore, their basis differ only for multiples of v and w and, after a linear combination, we can suppose that at most two elements differ by these vectors. Let us treat first the case of a single vector: let $V_6=\langle v_1,v_2,v_3,v_4,v_5,v_6 \rangle$  and let $W_6=\langle v_1,v_2,v_3,v_4,v_5,v_6+av+bw \rangle$. As the choice of $V_6$ varies, the coefficients $a,b$ are not constant, hence we can suppose $a=1,b=0$ (which happens in codimension one). Thus on $W$ we have $\omega(v_6+v,x,y)=v\wedge \sigma_1(x,y)$. So, if the six space annihilates such a three form, it must be isotropic for $\sigma_1$, which is clearly impossible on a six space, unless the two form degenerates, which happens in codimension two.\\ On the other hand, if $W_6=\langle v_1,v_2,v_3,v_4,v_5+w,v_6+v \rangle$ we have $\omega(v_6+v,x,y)=v\wedge \sigma_1(x,y)$ and $\omega(v_5+w,x,y)=w\wedge \sigma_2(x,y)$. This implies that the residual four space is isotropic with respect to both forms, which is a codimension twelve condition on the six spaces themselves. Indeed, this is $\Ml(4,8)\cong (\PP^1)^4$ inside $\Gr(4,8)$. Hence, by the genericity assumption on $\omega$, this does not happen in our case. 

\end{proof}

\subsection{S5: a section of a non-central variety} This sporadic Fano of K3 type is rather different from the others. It is a linear section of a certain 7-fold of index 3 that we call $\TT(2,9)$, which is not even central, let alone homogeneous. This 7-fold is the zero locus of a general global section of the bundle $\mQ^*(1)$ on the Grassmannian $\Gr(2,9)$. By Borel-Bott-Weil we interpret $H^0(\Gr(2,9), \mQ^*(1)) \cong \W^3 V_9^{\vee},$ therefore $\TT(2,9)$ (sometimes shortened as $\TT$ in the following proofs) is given by the locus of two-spaces in a 9-dimensonal space which are annihilated by a 3-form. This 7-fold, which is indeed a \emph{congruence of lines} has been considered in the recent work (\cite{faenzi}, Ex. 4.14). As we said, the variety $\TT(2,9)$ is not central, therefore we cannot apply any trick as in Proposition \ref{blowup} to compute the Hodge numbers of its linear section. Therefore we will need to go through a proper Borel-Bott-Weil computation.\\
 We will start by stating the final result on the Hodge numbers. 
\begin{proposition} \label{t129}The Hodge numbers of $\TT(2,9)$ are
\begin{center}
{\small
\[\begin{matrix}
&&&&&&&&1 &&&&&&&&\\
&&&&&&&0&&0&&&&&&&\\
&&&&&&0 &&1&&0&&&&&&\\
 &&&&& 0 && 0 && 0 &&0&&& \\
&&&&0 &&0 && 2 &&0 && 0 &&&&\\
&&&0&&0 & & 2 & &2 && 0 && 0 &&&\\
&&0 && 0 && 0 &&2 &&0 &&0 && 0&&\\
&0 && 0 && 0 && 2&& 2 &&0 &&0 && 0&\\
&&0 && 0 && 0 &&2 &&0 &&0 && 0&&\\
&&&0&&0 & & 2 & &2 && 0 && 0 &&&\\
&&&&0 &&0 && 2 &&0 && 0 &&&&\\
&&&&& 0 && 0 && 0 &&0&&& \\
&&&&&&0 &&1&&0&&&&&&\\
&&&&&&&0&&0&&&&&&&\\
&&&&&&&&1 &&&&&&&&
\end{matrix}\]}
\end{center}
\end{proposition}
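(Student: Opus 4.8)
The plan is to compute each Hodge number $h^{p,q}(\TT(2,9)) = \dim H^q(X,\Omega^p_X)$ directly, rather than relying on a Lefschetz-type comparison with the ambient Grassmannian. Here $X=\TT(2,9)\subset G:=\Gr(2,9)$ is the zero locus of a general section of $E:=\mQ^*(1)$, a rank-$7$ bundle that is globally generated but not ample, so $X$ is smooth of codimension $7$ and dimension $7$. One cannot take the usual shortcut precisely because the bundle fails to be ample: the Hodge diamond already exhibits classes off the diagonal below the middle degree (for instance $h^{3,2}=h^{2,3}=2$ in $H^5$), whereas $H^5(G)=0$, so the Sommese--Lefschetz restriction $H^k(G)\to H^k(X)$ is \emph{not} an isomorphism in low degrees. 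Consequently every group $H^q(X,\Omega^p_X)$ must be pinned down individually, and Euler characteristics alone will not suffice.

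First I would set up the two standard sequences. Since $X$ is cut out by a regular section of $E$ of codimension $7=\mathrm{rank}\,E$, its structure sheaf is resolved by the Koszul complex
\begin{equation*}
0 \to \W^{7} E^\vee \to \cdots \to \W^{2} E^\vee \to E^\vee \to \of_G \to \of_X \to 0,
\end{equation*}
with $E^\vee = \mQ(-1)$, while the conormal bundle fits into
\begin{equation*}
0 \to E^\vee|_X \to \Omega^1_G|_X \to \Omega^1_X \to 0 .
\end{equation*}
Taking $p$-th exterior powers of the second sequence produces, for each $p$, a filtration of $\Omega^p_G|_X$ with graded pieces $\Omega^{p-i}_X \otimes \W^i E^\vee|_X$. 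The strategy is then two-step: (a) compute $H^\bullet(X,\Omega^p_G|_X)$ as the hypercohomology of $\Omega^p_G\otimes \W^\bullet E^\vee$ on $G$, via the spectral sequence with first page $E_1^{i,j}=H^j(G,\Omega^p_G\otimes \W^i E^\vee)$; and (b) descend from $\Omega^p_G|_X$ to $\Omega^p_X$ by running the long exact sequences of the conormal filtration, inductively in $p$.

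The terms on the first page are computed by Borel--Bott--Weil. One decomposes $\Omega^p_G=\W^p(\mR\otimes \mQ^\vee)$ into irreducible equivariant bundles $\Sigma^\lambda\mR\otimes\Sigma^{\lambda'}\mQ^\vee$ by the Cauchy rule (with $\lambda$ having at most two rows, since $\mathrm{rank}\,\mR=2$), and likewise $\W^i E^\vee=\W^i\mQ\otimes\of(-i)$; each resulting summand is irreducible homogeneous and its cohomology is given by Bott's algorithm (add $\rho$ to the associated weight, discard singular weights, otherwise obtain a single nonzero group in degree equal to the length of the Weyl element restoring dominance). Assembling these across the Koszul and conormal sequences yields all the $h^{p,q}$, which I would then cross-check against Hodge symmetry $h^{p,q}=h^{q,p}$, Serre duality $h^{p,q}=h^{7-p,7-q}$, and the total topological Euler characteristic.

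The main obstacle is twofold. Computationally, the decompositions explode: $\Omega^p_G$ for $p$ up to $7$ carries many Schur summands, each of which must be tensored with $\W^i\mQ\otimes\of(-i)$ for $i$ up to $7$ and fed through Bott's algorithm, so in practice I would carry out the bookkeeping with the \texttt{Schubert2} machinery in Macaulay2, exactly as in \cite{eg1}. Conceptually, the delicate point is to verify that both spectral sequences degenerate where needed and that the conormal filtration sequences split at the level of dimensions: because each equivariant summand contributes in at most one cohomological degree, most potential differentials connect groups that are forced to vanish, so the ranks add up unambiguously. Confirming this vanishing degree-by-degree — in particular isolating the new level-one pieces $h^{3,2}=h^{2,3}=2$ in $H^5$ and $h^{4,3}=h^{3,4}=2$ in $H^7$ from the \emph{diagonal} contributions inherited from $G$ — is where the real work lies.
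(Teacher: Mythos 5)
Your overall strategy coincides with the paper's: resolve $\of_X$ by the Koszul complex of $E^\vee=\mQ(-1)$, compute cohomology of homogeneous bundles restricted to $X=\TT(2,9)$ via the resulting spectral sequence and Borel--Bott--Weil, then pass from $\Omega^p_G|_X$ to $\Omega^p_X$ using powers of the conormal sequence, with Euler characteristics and Lefschetz as constraints. However, your assertion that ``most potential differentials connect groups that are forced to vanish, so the ranks add up unambiguously'' fails exactly where the answer is decided. In the treatment of $h^{3,q}$, the Koszul spectral sequence for $\Sym^3 F|_{\TT}$ (with $F=\mQ(-1)$) produces two surviving terms, $H^{12}(\Sym^3F\otimes\W^6F)\cong\End(V_9)$ and $H^{12}(\Sym^3F\otimes\W^7F)\cong\W^3V_9$, linked by a map $\phi_f\colon\W^3V_9\to\End(V_9)$ whose rank is not forced by any vanishing: one must identify its dual with the equivariant map $\End(V_9)\to\W^3V_9^\vee$, $u\mapsto u(f)$ (the Lie-algebra action on the defining three-form) and prove injectivity for general $f$ by an equivariance-plus-genericity argument as in \cite{klm}. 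Without this non-formal input you cannot determine $H^5$ and $H^6$ of $\Sym^3F|_{\TT}$, hence not $h^{3,3}$ or $h^{3,4}$. Even with it, the long exact sequence only yields $h^{3,3}=h^{3,4}\le 2$; the paper must invoke Hard Lefschetz (giving $h^{3,3}\ge h^{2,2}=2$) together with $\chi(\Omega^3_{\TT})=2$ from Riemann--Roch to close the argument. In your plan these appear only as after-the-fact ``cross-checks,'' but they are load-bearing.

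A secondary issue: your step (b) filters $\Omega^p_G|_X=\W^p(\Omega^1_G|_X)$ with graded pieces $\W^iE^\vee|_X\otimes\Omega^{p-i}_X$. For $i\ge 1$ these are \emph{twisted} exterior powers of $\Omega^1_X$, whose cohomology is not supplied by an induction on the plain Hodge numbers; you would have to enlarge the recursion to compute $H^\bullet(X,\Omega^{p-i}_X\otimes\W^iE^\vee|_X)$ as well. The paper sidesteps this by using instead the exact complex $0\to\Sym^pF|_X\to(\Omega^1_G\otimes\Sym^{p-1}F)|_X\to\cdots\to\Omega^p_G|_X\to\Omega^p_X\to 0$, all of whose terms except the last are restrictions of homogeneous bundles on $G$ and hence directly accessible to the Koszul/Borel--Bott--Weil machinery.
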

From the above diamond it immediately follows that holomorphic Euler characteristics for $\TT$ are $\chi(\Omega^1_{\TT})=-1, \ \chi(\Omega^2_{\TT})=0, \ \chi(\Omega^3_{\TT})=2$. These can be easily double-checked using Macaulay2. Moreover the topological Euler characteristic $e_{\textrm{top}}(\TT)=0$ (cf. \cite{faenzi}, Ex. 4.14).
\begin{corollary} \label{cor29}Let $X= \TT(2,9) \cap H$ be a linear section of $\TT(2,9)$. This is a Fano of K3 type with Hodge diamond
\begin{center}
{\small
\[\begin{matrix}
&&&&&&&&1 &&&&&&&&\\
&&&&&&&0&&0&&&&&&&\\
&&&&&&0 &&1&&0&&&&&&\\
 &&&&& 0 && 0 && 0 &&0&&& \\
&&&&0 &&0 && 2 &&0 && 0 &&&&\\
&&&0&&0 & & 2 & &2 && 0 && 0 &&&\\
&&0 && 0 && 1 &&22 &&1 &&0 && 0&&\\
&&&0&&0 & & 2 & &2 && 0 && 0 &&&\\
&&&&0 &&0 && 2 &&0 && 0 &&&&\\
&&&&& 0 && 0 && 0 &&0&&& \\
&&&&&&0 &&1&&0&&&&&&\\
&&&&&&&0&&0&&&&&&&\\
&&&&&&&&1 &&&&&&&&
\end{matrix}\]}
\end{center}
The vanishing subspace is $h^{2,2}_{\van}(X)=20$. The holomorphic Euler characteristics for $X$ are $\chi(\Omega^1_{X})=-1, \ \chi(\Omega^2_{X})=1, \ \chi(\Omega^3_{\TT})=-18$. Moreover the topological Euler characteristic $e_{\textrm{top}}(X)=24$.\end{corollary}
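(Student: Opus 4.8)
The plan is to bootstrap the Hodge numbers of the hyperplane section $X = \TT(2,9) \cap H$ from those of the ambient sevenfold, computed in Proposition \ref{t129}, combining the Lefschetz hyperplane theorem (which transports everything away from the middle weight) with a single Riemann--Roch input that resolves the middle weight. First I would note that $X$ is a smooth ample divisor in the smooth sevenfold $\TT(2,9)$ (and is Fano of index $2$ by adjunction, since $-K_{\TT(2,9)} = 3H$). The Lefschetz hyperplane theorem then gives that the restriction $H^k(\TT(2,9)) \to H^k(X)$ is an isomorphism of Hodge structures for $k \le 5$ and is injective for $k = 6$. This identifies $H^k(X) \cong H^k(\TT(2,9))$ for $k \le 5$, so the top six rows of the claimed diamond are read off directly from Proposition \ref{t129}; Poincaré duality on the sixfold $X$, $h^{p,q}(X) = h^{6-p,6-q}(X)$, then fills in the rows in degrees $7 \le k \le 12$. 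At this point every Hodge number with $p+q \neq 6$ is pinned down, and the whole problem reduces to determining the middle cohomology $H^6(X)$.

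Next I would determine $H^6(X)$ through the holomorphic Euler characteristics $\chi(\Omega^p_X)$. The Chern classes of $X$ follow from those of $\TT(2,9)$ via the normal sequence $0 \to T_X \to T_{\TT(2,9)}|_X \to \of_X(1) \to 0$, and the Chern classes of $\TT(2,9)$ itself come from its description as the zero locus of $\mQ^*(1)$ on $\Gr(2,9)$ (equivalently from the associated Koszul resolution); the resulting $\chi(\Omega^p_X)$ can be evaluated by Riemann--Roch and cross-checked with the Macaulay2 routines already used in the paper. The decisive observation is that, once the off-middle Hodge numbers have been fixed by the previous paragraph, the only unknown term in $\chi(\Omega^p_X) = \sum_q (-1)^q h^{p,q}(X)$ is the one with $p+q=6$, namely $h^{p,6-p}(X)$. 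Solving these linear relations with $\chi(\of_X)=1$, $\chi(\Omega^1_X)=-1$, $\chi(\Omega^2_X)=1$, $\chi(\Omega^3_X)=-18$ forces $h^{6,0}(X)=h^{5,1}(X)=0$, $h^{4,2}(X)=h^{2,4}(X)=1$ and $h^{3,3}(X)=22$. The vanishing of $h^{6,0}$ and $h^{5,1}$ shows that the middle weight has level exactly $2$, so $X$ is of K3 type; peeling off the rank-$2$ image of $H^6(\TT(2,9))$ (which has $h^{3,3}=2$) leaves a vanishing subspace of dimension $22-2=20$, in agreement with the stated value. Finally $e_{\textrm{top}}(X)=24$ follows by summing $(-1)^{p+q}h^{p,q}$ over the completed diamond, and may be recomputed independently from the Chern numbers of $X$ as a consistency check.

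The genuinely delicate input---the full Hodge diamond of the non-central, non-homogeneous sevenfold $\TT(2,9)$---is supplied by Proposition \ref{t129}, so the main obstacle remaining here is purely computational: assembling the Chern classes of $X$ and carrying out the Riemann--Roch evaluations of the four Euler characteristics $\chi(\Omega^p_X)$. Because $\TT(2,9)$ is not a complete intersection but a degeneracy-type locus, this bookkeeping is heavier than in the complete-intersection cases treated earlier, and it is the step where one most wants the assistance of a computer algebra system; conceptually, however, everything about the middle weight is forced the moment $\chi(\of_X)$, $\chi(\Omega^1_X)$, $\chi(\Omega^2_X)$, $\chi(\Omega^3_X)$ are known.
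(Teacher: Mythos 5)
Your proposal is correct and follows essentially the same route as the paper: the off-middle Hodge numbers are transported from Proposition \ref{t129} via the Lefschetz hyperplane theorem and Poincar\'e duality, and the middle weight is then pinned down by the Riemann--Roch computation of $\chi(\of_X)$ and $\chi(\Omega^p_X)$ for $p=1,2,3$ (done with computer algebra), exactly as in the paper's one-line proof. Your write-up just makes explicit the linear algebra showing each $h^{p,6-p}$ is forced, which is a harmless elaboration of the same argument.
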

\begin{proof} The Hodge numbers for $X$ follows from those of $\TT(2,9)$ together with the computations of $\chi(\Omega^i)$, which can be easily done a priori via Riemann-Roch and the help of computer algebra.
\end{proof}
\subsubsection{Borel-Bott-Weil computation for $\TT(2,9)$}
Borel-Bott-Weil theorem is a powerful tool for computing cohomologies of vector bundles on homogeneous spaces. Together with some well-known sequences it is often sufficient to compute Hodge numbers for varieties cut by general global sections of homogeneous vector bundles. Although rather long and involved, the procedure is mostly algorithmic. We will include the general setup (skipping most details for the sake of readability) in order to give the reader a toolbox for further computations.
\subsubsection*{General BBW strategy}
Let $\Gr(k,n)$ be the
Grassmannian of $k$-dimensional subspaces of $V_n$. Consider two dominant weights $\alpha
= (\alpha_1, \dots ,\alpha_{n-k})$ and $\beta = (\beta_1, \dots
,\beta_{k})$ for the Schur functors $\Sigma$ applied to $\mQ$ and $\mR$ and their concatenation $\gamma = 
(\gamma_1,\dots,\gamma_n)$. Let $\delta$ the decreasing sequence $\delta = (n-1, \dots , 0)$ and
consider $\gamma + \delta$. Write $\sort(\gamma + \delta)$ for the sequence obtained
by arranging the entries of $\gamma + \delta$ in non-increasing order, and define
$\tilde{\gamma} = \sort(\gamma + \delta)- \delta$.
If $\gamma + \delta$ has repeated entries, then
$$H^i(\Gr(k,n),\Sigma_{\alpha} \mQ \otimes \Sigma_{\beta} \mR)=0$$ for all $i
\ge 0$. Otherwise,
writing $l$ for the \emph{number of disorders}, that is the number of pairs $(i, j)$ with $1 \le i < j  \le n$
and $\gamma_i - i
< \gamma_j - j$  we have$$H^l(\Gr(k,n),\Sigma_{\alpha} \mQ \otimes
\Sigma_{\beta} \mR ) = \Sigma_{\tilde{\gamma}} V$$
and
$H^i(\Gr(k,n),\Sigma_{\alpha} \mQ\otimes \Sigma_{\beta}\mR )=0$ for $i \ne
l$. 
Let now $Z \subset \Gr(k,n)$ a variety which is the zero locus of a general section of a rank $r$ globally generated vector bundle $F^{\vee}$. We have the Koszul complex for $Z$, which is indeed a resolution
\begin{equation}\label{koszul} 0 \to \det(F) \to \W^{r-1} F \to \ldots \to F \to \of_G \to \of_Z \to 0. \end{equation}
If $H$ is another globally generated vector bundle on $\Gr(k,n)$ we can tensor the above sequence by $H$:  we have the spectral sequence
 $$\mathbf{E}_1^{-q,p} =H^p(\Gr(k,n), H \otimes \W^q F)\Rightarrow H^{p-q}(Z, H|_Z),$$
if moreover both $F$ and $H$ are homogeneous we can compute all terms on the left by BBW formula. We can now compute the Hodge numbers for our $X$.  Notice that the $F$ in the Koszul complex above is the dual of bundle we start with. In this case it will be $\mQ(-1)$.
\subsubsection*{The Hodge numbers $h^{1,i}(\TT(2,9))$}
We apply the above formula together with the conormal sequence, which since $N^{\vee}_{\TT/\Gr} \cong F$ becomes
$$ 0 \to F|_{\TT}\to \Omega^1_G|_{\TT} \to \Omega^1_{\TT} \to 0.$$
We can compute the cohomologies of the first two bundles using the above strategy. $F|_X$ turns out to be acyclic, whereas the only non-zero cohomology of $\Omega^1_G|_{\TT}$ is $H^1(\Omega^1_G|_{\TT}) \cong H^1(\Omega^1_G) \cong \C$. It follows that the Hodge numbers $h^{1,i}(\TT)=0$, $i \neq 1$ and $h^{1,1}(\TT)=1$.
\subsubsection*{The Hodge numbers $h^{2,i}(\TT(2,9))$}
In order to compute these other Hodge numbers we need to rise the conormal sequence to the second exterior power, that is 
$$ 0\to \Sym^2 F|_{\TT} \to (F \otimes \Omega^1_G)|_{\TT} \to \Omega^2_{G}|_{\TT} \to \Omega^2_{\TT} \to 0. $$
$\Sym^2 F \otimes \bigwedge^i F$ is acylic for $i\neq 7$. This can be checked using first the Littlewood-Richardson formula to determine the irreducible decomposition of each of these bundles, and then applying several iteration of the BBW formula.
For $i=7$ it is $\Sigma_{3,1^6}\mQ\otimes \Sigma_{9,9}\mR$ that has
$H^{12}(\Sym^2 F \otimes \bigwedge^7 F) \cong \mathbb{C}$ (and
therefore $H^5(\Sym^2 F|_{\TT}) \cong \mathbb{C}$). The bundle $\Omega^1 \otimes F \otimes \bigwedge^i F$ is acylic for all i.
The bundle $\Omega^2 \otimes \bigwedge^i F$ is not acylic for $i=0$ (and $H^2(\Omega^2_G|_{\TT}) \cong\C^2$) and for $i=3$.
Indeed in the case $i=3$ its decomposition in irreducibles contains the summand
$ \Sigma_{3,3,3,2,2,1,1}\mQ\otimes \Sigma_{7,5} \mR$.
This gives $H^6(\Omega^2 \otimes \bigwedge^3 F)= \mathbb{C}$.
Putting all these data together one obtains $H^2(\Omega^2_{\TT})=H^3(\Omega^2_{\TT})\cong \C^2$ with the other Hodge $h^{2,i}=0$.
\subsubsection*{The Hodge numbers $h^{3,i}(\TT(2,9))$}
By Riemann-Roch one gets $\chi(\Omega^3_{\TT})=2$. Thanks to the knowledge of $h^{i,3}(\TT)$ for $i \neq 3,4$, this implies $ h^{3,3}(\TT)=h^{4,3}(\TT).$
 We use the third power of the conormal sequence, namely
$$0 \to \Sym^3 F |_{\TT} \to (\Omega^1 \otimes \Sym^2F)|_{\TT} \to (\Omega^2 \otimes
F)|_{\TT} \to \Omega^3_G|_{\TT} \to \Omega^3_{\TT} \to 0.$$

One strategy is to split the sequence above in three short one, namely
\begin{equation}0 \to \Sym^3 F |_{\TT} \to (\Omega^1 \otimes \Sym^2F)|_{\TT} \to J_2 \to 0 ,\end{equation}
\begin{equation} 0 \to J_2 \to (\Omega^2 \otimes F)|_{\TT} \to J_1 \to 0 ,\end{equation}
\begin{equation} \label{finaleq} 0\to J_1 \to \Omega^3_G|_{\TT} \to \Omega^3_{\TT} \to 0. \end{equation}

The only cohomological contributions come from
\begin{enumerate}[(a)] 
\item $H^{12}(Sym^ 3F \otimes \bigwedge^6 F)= \mathbb{C}^{81} \cong \End(V_9) \cong \mathfrak{gl}(V_9)$;
\item $H^{12}(Sym^3 F \otimes \bigwedge^7 F) = \mathbb{C}^{84} \cong \W^3 V_9$;
\item $H^{13}(\Omega^1 \otimes Sym^2F \otimes \bigwedge^7 F) = \mathbb{C} \cong H^6 ((\Omega^1 \otimes \Sym^2F)|_{\TT})$;
\item $H^6(\Omega^2 \otimes F \otimes \bigwedge^2 F) = \mathbb{C}\cong H^4((\Omega^2 \otimes F)|_{\TT})$;
\item $H^{10}(\Omega^2 \otimes F \otimes \bigwedge^5 F)= \mathbb{C} \cong H^5((\Omega^2 \otimes F)|_{\TT})$;
\item $H^3(\Omega^3) = \mathbb{C}^2 \cong H^3(\Omega^3_G|_{\TT})$;
\item $H^7(\Omega^3 \otimes \bigwedge^3 F)= \mathbb{C} \cong H^4(\Omega^3_G|_{\TT} )$;
\item $H^{11}(\Omega^3 \otimes \bigwedge^6 F) = \mathbb{C} \cong  H^5(\Omega^3_G|_{\TT} )$.
\end{enumerate}

Except in the case of (a) and (b) one can compute immediately the cohomology of the restriction of the bundles to $\TT$. The only non obvious case is given by the exact sequence
$$ 0 \to H^5 (\Sym^3 F |_{\TT} ) \to \bigwedge^3 V \stackrel{\phi_f}{\to} \End(V_9) \to H^6 (\Sym^3 F |_{\TT}) \to 0.$$
The situation is analogous to (\cite{klm}, Appendix B). Indeed the dual of the map $\phi_f$ is the map $\varphi_f: \End(V_9) \to \W^3 V_9^{\vee}$ mapping $u \mapsto u(f)$, where $f$ is the defining section for $\TT$ and $u$ is the Lie action. This is because one can do the same computation in family, use the $GL(V)$ equivariance to ensure that $\varphi_f$ depends linearly on $f$. Since up to a scalar there is a unique equivariant map from $\W^3 V^{\vee}$ to $\Hom(\End(V), \W^3 V^{\vee})$ we can conclude. Therefore for general $f$ the map $\varphi_f$ is injective (this can be verified for example using the general form for $f$ given in (\cite{faenzi}, 4.14) with sufficiently general coefficients and therefore $\phi_f$ is surjective as required.\\
If we plug in these cohomological informations in the long exact sequence associate to the sequence \ref{finaleq} we get several non-zero cohomology groups. In particular the final groups in this sequence are
$$ \ldots \to \C \stackrel{\epsilon}{\to} H^4 (\Omega^3_{\TT}) \stackrel{\mu}{\to} \C^2 \stackrel{\nu}{\to} \C \to 0$$
Therefore $h^{3,3}(\TT)=h^{3,4}(\TT)= \ddim (\ker \mu) + \ddim (\mathrm{Im} \ \mu)$ and by standard properties of long exact sequences $h^{3,3}(\TT)=h^{3,4} \leq 2$. On the other hand by Hard Lefschetz $h^{3,3}(\TT)=h^{3,4} \geq 2$. This concludes the proof of the theorem. 
\subsubsection{Geometry of $\TT(2,9)$ and $X$}
This rather atypical (for our setting) Hodge structure for $\TT(2,9)$ has a geometrical explanation.\\
First consider a linear section $X_H \subset \Gr(3,9)$. It is a Fano 17-fold of index 8. One can compute that its central Hodge structure has level 1, with the same numerology of a genus 2 curve. Consider the configuration in the diagram below.
The map $p: \mathrm{Fl}(2,3,9) \to \Gr(3,9)$ is a $\PP^2$ bundle, given by the choice of $V_2 \subset V_3$. It remains as well a $\PP^2$ bundle if we restrict $p$ from $X_{p^* H} \to X_H$. The Hodge structure of $X_H \subset \Gr(3,9)$ is therefore repeated three times in $X_{p^* H}$. Consider as well the projection $\phi$ from $\mathrm{Fl}(2,3,9) \cong \PP_{\Gr(2,9}(\mQ(-1))$ to $\Gr(2,9)$, that is a $\PP^6$-bundle. Restricting $\phi$ to $X_{p^* H}$ this gives a $\PP^5$ bundle generically on $\Gr(2,9)$, that degenerates on a $\PP^6$ on the zero locus $Z_H$ of a section of the dual of $\mQ(-1)$, that is $\TT(2,9)$.
\begin{equation} \label{diagrammoneswaggone}\xymatrix{
& F \ar[dl]^\phi & X_{p^* H} \ar[dr]^p \ar[dl]^ \phi \ar@{^{(}->}[r] & \mathrm{Fl}(2,3,9) \ar[dr]^p \\
Z_H \ar@{^{(}->}[r] & \Gr(2,9) & &  X_H \ar@{^{(}->}[r] & \Gr(3,9)   
}\end{equation}
One can prove that the Hodge structure of $\TT(2,9)$ can be pushed down from $X_{p^* H}$, which in turn can be calculated from $X_H \subset \Gr(3,9)$. This can be considered as an alternative (and a bit more geometrical) proof of Thm. \ref{t129}. The precise details of this construction and extension to the derived category case will appear in \cite{nested}. In particular a similar argument, albeit in a more complicated version, can be used to derive directly Corollary \ref{cor29} and geometrically explain the K3 structure.
We do not produce here a result interpreting some moduli space on $X$ as an IHS: however we expect a similar result to Proposition \ref{t2} to hold here as well.

\subsection{S6: $3 \times $K3 structure }
This sporadic Fano has some interesting features. First of all, unlike all our other examples, it is not a section of another Fano by the zero locus of a line bundle. Then it is a Fano of K3 type in two different ways. \\
The variety $\TT(2,10)$ is the zero locus of a general global section of the bundle $\mQ^*(1)$ on the Grassmannian $\Gr(2,10)$. As in the previous case \textbf{S5} we have  $$H^0(\Gr(2,10), \mQ^*(1)) \cong \W^3 V_{10}^{\vee},$$ therefore $\TT(2,10)$ is given by the locus of two-spaces in a 10-dimensonal space which are annihilated by a 3-form. It is straightforward to check that $\TT(2,10)$ is a Fano 8-fold of index $\iota=3$. We compute first its Hodge numbers

\begin{proposition} \label{3k3}The Hodge numbers of $\TT(2,10)$ are
\begin{center}
{\small
\[\begin{matrix}
&&&&&&&&1 &&&&&&&&\\
&&&&&&&0&&0&&&&&&&\\
&&&&&&0 &&1&&0&&&&&&\\
 &&&&& 0 && 0 && 0 &&0&&& \\
&&&&0 &&0 && 2 &&0 && 0 &&&&\\
&&&0&&0 & & 0 & &0 && 0 && 0 &&&\\
&&0 && 0 && 1 &&22 &&1 &&0 && 0&&\\
&0 && 0 && 0 && 0&& 0 &&0 &&0 && 0&\\
0 && 0 && 0 && 1&& 23 &&1 &&0 && 0&&0\\
&0 && 0 && 0 && 0&& 0 &&0 &&0 && 0&\\
&&0 && 0 && 1 &&22 &&1 &&0 && 0&&\\
&&&0&&0 & & 0 & &0 && 0 && 0 &&&\\
&&&&0 &&0 && 2 &&0 && 0 &&&&\\
&&&&& 0 && 0 && 0 &&0&&& \\
&&&&&&0 &&1&&0&&&&&&\\
&&&&&&&0&&0&&&&&&&\\
&&&&&&&&1 &&&&&&&&
\end{matrix}\]}
 \end{center}
\end{proposition}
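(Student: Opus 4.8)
The plan is to run a Borel--Bott--Weil (BBW) computation entirely parallel to the one carried out above for $\TT(2,9)$, exploiting that $\TT=\TT(2,10)$ is again the zero locus of a general section of $\mQ^*(1)$ on $G=\Gr(2,10)$, now of codimension $8=\mathrm{rk}\,\mQ^*(1)$ inside the $16$-dimensional \emph{central} variety $G$. Setting $F=\mQ(-1)$, so that $F|_\TT\cong N^\vee_{\TT/G}$, I would resolve $\of_\TT$ by the Koszul complex of $F$ and resolve each $\Omega^k_\TT$ by the $k$-th avatar of the conormal sequence,
$$0\to \Sym^k F|_\TT\to \Sym^{k-1}F\otimes\Omega^1_G|_\TT\to\cdots\to \Omega^k_G|_\TT\to\Omega^k_\TT\to 0.$$
Each restriction $H|_\TT$ of a homogeneous bundle is then read off from the hypercohomology spectral sequence $E_1^{-q,p}=H^p(G,H\otimes\W^q F)\Rightarrow H^{p-q}(\TT,H|_\TT)$, whose $E_1$-terms are computed by decomposing $H\otimes\W^q F$ into irreducible homogeneous bundles $\Sigma_\alpha\mQ\otimes\Sigma_\beta\mR$ via Littlewood--Richardson and applying the BBW recipe recalled above.

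Before grinding through the bundles I would use symmetry to minimise the work. As $\TT$ is an $8$-fold, Hodge symmetry $h^{p,q}=h^{q,p}$, Serre duality $H^q(\Omega^p_\TT)\cong H^{8-q}(\Omega^{8-p}_\TT)^\vee$, and Hard Lefschetz reduce everything to determining $H^\bullet(\Omega^p_\TT)$ for $p=0,1,2,3,4$; the Fano condition kills $H^0(\Omega^p_\TT)$ for $p>0$. The columns $p=1,2$ are short: one expects $F|_\TT$ acyclic and $\Omega^1_G|_\TT$ to contribute only $H^1\cong\C$, giving $h^{1,1}=1$, and the $k=2$ sequence to produce $h^{2,2}=2$ together with the first off-diagonal class $h^{2,4}=1$. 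It helps to keep the expected structure in mind as a guide: the diagonal entries $h^{p,p}$ in weights $0,2,4,6,8$ should equal $\dim H^{p,p}(G)=1,1,2,2,3$ plus a rank-$20$ primitive part appearing from weight $6$ on, that primitive piece being the Picard-type lattice of the K3 sub-Hodge structure; this predicts $h^{3,3}=2+20=22$ and $h^{4,4}=3+20=23$. In parallel I would compute $\chi(\Omega^p_\TT)$ for $p\le 4$ by Riemann--Roch (double-checked in Macaulay2) to constrain the alternating sums, and verify that the off-diagonal entries assemble into K3 structures in weights $6$ and $8$, the weight-$6$ one being carried by Hard Lefschetz into weight $10$ so as to account for the ``$3\times$K3'' phenomenon.

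The genuine obstacle is the column $p=3$ and, above all, the self-dual middle $p=4$. For $\Omega^3$ (which must yield $h^{3,3}=22$ and $h^{3,5}=1$) the decisive input is a single connecting map in the spectral sequence, namely the $\GL(V_{10})$-equivariant map induced by the Lie action $\varphi_f\colon\End(V_{10})\to\W^3V_{10}^\vee$, $u\mapsto u\cdot f$, where $f$ is the defining three-form. Exactly as in the $\TT(2,9)$ case, one argues by equivariance — up to scale there is a unique equivariant map $\W^3V^\vee\to\Hom(\End V,\W^3V^\vee)$, so $\varphi_f$ depends linearly on $f$ — that $\varphi_f$ has maximal rank for general $f$; its cokernel, of dimension $\dim\W^3V_{10}^\vee-\dim\End(V_{10})=120-100=20$, is precisely the primitive part anticipated above. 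For the middle $\Omega^4$ the BBW bookkeeping is heaviest, since many summands of $\Sym^{4-i}F\otimes\Omega^i_G$, tensored with the various $\W^q F$, feed the spectral sequence; one must isolate the terms that survive and check that the governing equivariant differentials again have maximal rank for general $f$. Self-duality of $H^4(\Omega^4_\TT)$, the Hard Lefschetz bound $h^{4,4}\ge h^{3,3}=22$, and the value of $\chi(\Omega^4_\TT)$ then pin $h^{4,4}=23$ and $h^{4,2}=h^{4,6}=1$. The step I expect to be the crux is verifying these maximal-rank conditions for general $f$ — most safely by specialising to the explicit generic normal form of the three-form and computing the ranks directly; once they are established, the remainder is long but purely algorithmic BBW bookkeeping.
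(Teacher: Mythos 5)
Your proposal is correct and follows essentially the same route as the paper: powers of the conormal sequence resolved through the Koszul spectral sequence and Borel--Bott--Weil, with the crux being the maximal rank of the equivariant map between $\End(V_{10})$ and $\W^3 V_{10}^\vee$ (whose $20$-dimensional kernel/cokernel accounts for the primitive part), and the middle number $h^{4,4}=23$ extracted from $\chi(\Omega^4_{\TT})$ rather than a full computation of the $\Omega^4$ column. The paper likewise reduces the $\Sym^3 F|_{\TT}$ term to the surjectivity of $\W^3V_{10}\to\End(V_{10})$ and obtains the remaining entry from the Euler characteristic, so the two arguments coincide.
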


As we can see from the above theorem, $\TT(2,10)$ has a Hodge structure of K3 type both in $H^6$ (and therefore in $H^{10}$ by duality) and in $H^8$, making it a rather peculiar example. Indeed by Hard Lefschetz the K3 structure in $H^6$ immediately implies the presence of a K3 sub-structure in $H^{10}$. The surprising bit is that this is the whole of $H^8$, with the exception of a primitive cycle. The computation of the above Hodge numbers is done via a Borel-Bott-Weil computation, as in the previous section. Since these are rather long computations (and not really different from the previous case) we will just sketch it. 
\begin{proof} Let $F$ be the dual of the bundle that cuts $\TT$.The computations of the Hodge numbers until $h^{2,i}$ does not present any challenge. In the third exterior power of the conormal exact sequence 
$$0 \to \Sym^3 F |_{\TT} \to (\Omega^1 \otimes \Sym^2F)|_{\TT} \to (\Omega^2 \otimes
F)|_{\TT} \to \Omega^3_G|_{\TT} \to \Omega^3_{\TT} \to 0$$
we have that $\Omega^2 \otimes
F)|_{\TT}$ is acylic, for $(\Omega^1 \otimes \Sym^2F)|_{\TT}$ the unique cohomology group is $H^7((\Omega^1 \otimes \Sym^2F)|_{\TT}) \cong \C$ and for the third cotangent we have $H^3( \Omega^3_G|_{\TT}) \cong \C^2$. The only tricky part comes when considering $\Sym^3 F |_{\TT}$. Indeed from the spectral sequence associated to the Koszul resolution for $\Sym^3 F |_{\TT}$ one finds an exact sequence 
$$ 0 \to H^{13}(K_7) \to H^{14}( \W^8 F \otimes \Sym^3 F) \to   H^{14}( \W^7 F \otimes \Sym^3 F) \to H^{14}(K_7) \to 0 $$ 

 where $K_7$ is the sheaf which we used to complete the sequence $0 \to \W^8 F \otimes \Sym^3 F \to  \W^7 F \otimes \Sym^3 F$. The above sequence is equal to:
 $$ 0 \to H^{13}(K_7) \to \W^3 V_{10} \to   \End(V_{10}) \to H^{14}(K_7) \to 0 $$ 
 As in the previous section case, one can argue that the middle map is surjective, and therefore chasing the sequence one gets that the unique cohomology group for  $\Sym^3 F |_{\TT}$ is $H^6( \Sym^3 F |_{\TT}) \cong \C^{20}$. Collecting all these data together in the above long exact sequence we get $h^{3,3}(\TT)=22$ and $h^{5,3}(\TT)=1$. The missing number can be obtained from the computation of the Euler characteristic.
\end{proof}

This strange Hodge structure can be explained with a construction absolutely equivalent to the one of \eqref{diagrammoneswaggone}, with of course  $\mathrm{Fl}(2,3,10)$ being the relevant Flag.  In particular, one can repeat the construction of 3.9.2 and do the computations in $K_0(\textrm{Var})$ as an alternative way of computing Hodge numbers. Indeed this is the same Hodge structure coming from the Debarre-Voisin twentyfold $Y_1 \subset \Gr(3,10)$. It is therefore not surprising that we can relate the IHS fourfold $Z_{DV} \subset \Gr(6,10)$ to $\TT(2,10)$. \\
Define first $Z_{\of(1)^4}$ to be the zero locus of four general linear sections in the Grassmannian $\Gr(2,6)$. Moreover we denote by  $\TT_{,\omega}(2,10)$ a distinguished element of the family defined by a specified 3-form $\omega$.
\begin{proposition} \label{t2}
The Debarre-Voisin fourfold $F_{\omega}$ is birational to the moduli space (contained in the Hilbert scheme) of fourfolds $Z_{\of(1)^4}$ contained in the variety $\TT_{,\omega}(2,10)$.
\end{proposition}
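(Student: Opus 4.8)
The plan is to produce explicit, generically inverse rational maps relating $F_\omega=Z_{DV}$ to the Hilbert-scheme locus $\cM$ of fourfolds of type $Z_{\of(1)^4}$ lying in $\TT_{,\omega}(2,10)$. Throughout I view $\omega\in\W^3 V_{10}^\vee$ as the contraction $\omega\colon\W^2 V_{10}\to V_{10}^\vee$, $u\wedge v\mapsto\omega(u,v,-)$, so that, as in the description of $\TT(2,10)$, a two-space $U=\langle u,v\rangle$ lies in $\TT_{,\omega}$ precisely when $\omega(u\wedge v)=0$ in $V_{10}^\vee$, while $F_\omega=\{[W_6]\in\Gr(6,10):\omega|_{\W^3 W_6}=0\}$.

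First I would build the forward map $\Psi\colon F_\omega\dashrightarrow\cM$. Fix $[W_6]\in F_\omega$. Because $\omega|_{W_6}=0$, for $u,v\in W_6$ the covector $\omega(u\wedge v)$ kills $W_6$ and so lies in $(V_{10}/W_6)^\vee$; this produces an alternating map $\psi_{W_6}\colon\W^2 W_6\to(V_{10}/W_6)^\vee$ between spaces of dimensions $15$ and $4$. A two-space $U\subset W_6$ lies in $\TT_{,\omega}$ exactly when its Plücker point lies in $K:=\Ker\psi_{W_6}$, so for generic $W_6$ (where $\psi_{W_6}$ is surjective and $\ddim K=11$) one gets
$$\Gr(2,W_6)\cap\TT_{,\omega}(2,10)=\Gr(2,W_6)\cap\PP(K),$$
a codimension-four linear section of $\Gr(2,W_6)\cong\Gr(2,6)$, i.e. a copy of $Z_{\of(1)^4}$. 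Setting $\Psi([W_6])=\Gr(2,W_6)\cap\TT_{,\omega}$ gives the map; the first routine step is the transversality and smoothness of this section for generic $\omega$ and generic $[W_6]\in F_\omega$.

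Next I would invert $\Psi$ by taking linear spans. The points of $Z'=\Psi([W_6])$ are two-planes of $V_{10}$ contained in $W_6$, and since $\Gr(2,W_6)\cap\PP(K)$ is a positive-dimensional family of lines of $\PP(W_6)$ not contained in any hyperplane it spans $W_6$; hence $\langle Z'\rangle=W_6$ and the span map is a one-sided inverse, $\langle\,\cdot\,\rangle\circ\Psi=\mathrm{id}$. This already shows $\Psi$ is generically injective, hence birational onto its image, which is a four-dimensional (since $\ddim F_\omega=4$) irreducible subvariety of $\cM$. The span construction also realises $\cM$ inside the degeneracy locus
$$D_4=\{[W_6]\in\Gr(6,10):\operatorname{rk}(\W^2 W_6\xrightarrow{\ \omega\ }V_{10}^\vee)\le 4\},$$
into which $F_\omega$ visibly embeds, since $\omega|_{W_6}=0$ forces the image of $\omega|_{\W^2 W_6}$ into the $4$-dimensional $(V_{10}/W_6)^\vee$.

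The remaining—and hardest—point is surjectivity: that every generic $Z'\in\cM$ actually arises from $F_\omega$, equivalently that a rank-four degeneracy of $\omega|_{\W^2 W_6}$ forces $\omega|_{W_6}=0$, so that $\langle Z'\rangle\in F_\omega$. Writing $\omega|_{\W^2 W_6}=(\psi_{W_6},c)$ with $c\colon\W^2 W_6\to W_6^\vee$ the contraction by $\omega|_{W_6}$, note that if $\omega|_{W_6}$ were a generic three-form on $W_6$ then $c$ alone would have rank six, violating $\operatorname{rk}\le 4$; thus any $[W_6]\in D_4\smallsetminus F_\omega$ must have $\omega|_{W_6}$ extremely degenerate. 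The expected codimension of the rank-$\le 4$ locus for a map $\C^{15}\to\C^{10}$ is $(15-4)(10-4)=66\gg 24=\ddim\Gr(6,10)$, so for generic $\omega$ one expects $D_4$ to collapse onto the structured locus $F_\omega$ (cut in codimension $20$ by the $\omega|_{W_6}=0$ equations). I would make this rigorous by a semicontinuity argument based on the explicit normal form $\omega=\omega_8+v^\vee\wedge\sigma_1+w^\vee\wedge\sigma_2$ used in Theorem \ref{hk}, checking on that model that $D_4$ agrees with $F_\omega$ at a generic point. Granting this, $\Psi$ and the span map are mutually inverse, dominant, generically injective rational maps between irreducible fourfolds, which is exactly the claimed birationality; geometrically, just as in Theorem \ref{hk} and diagram \eqref{diagrammoneswaggone}, a point of $\cM$ records the six-space $W_6\in Z_{DV}$ through whose lines the form $\omega$ degenerates.
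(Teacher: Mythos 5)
Your construction is essentially the paper's. The forward map is identical in substance: for $[W_6]\in F_\omega$ the four linear conditions cutting $\Gr(2,W_6)\cap\TT_{,\omega}(2,10)$ out of $\Gr(2,W_6)$ come from the contraction $\W^2 W_6\to (V_{10}/W_6)^\vee$ induced by $\omega$ once $\omega|_{\W^3W_6}=0$; the paper packages this via the filtration of $\W^3V_{10}^\vee\otimes\cO$ on $\Gr(6,10)$ and works with the image $H_4\subset\W^2W_6^\vee$ of the transposed map $V_{10}/W_6\to\W^2W_6^\vee$, while you work with the kernel of the map itself --- the same four hyperplanes in $\PP(\W^2W_6)$. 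Your injectivity argument (recover $W_6$ as the span of the two-planes of $Z'$) is a slightly more explicit version of the paper's remark that changing the point of $F_\omega$ changes the ambient $\Gr(2,6)$. The one place you genuinely go beyond the paper is dominance: the paper stops at generic injectivity and declares birationality, in effect treating the moduli space as the closure of the image, whereas you attempt to show that every generic $Z'$ arises from $F_\omega$ by arguing that the rank $\le 4$ degeneracy locus $D_4$ collapses onto $F_\omega$. That concern is legitimate, but your argument for it is only heuristic: the expected codimension $(15-4)(10-4)=66$ pertains to a generic linear map $\C^{15}\to\C^{10}$, and $\omega|_{\W^2W_6}$ is nothing like generic (for instance, if $\omega|_{W_6}$ is a nonzero decomposable three-form, the inner contraction $\W^2W_6\to W_6^\vee$ already has rank only $3$, so rank $\le 4$ does not force $\omega|_{W_6}=0$ by a naive count); the normal-form verification you defer is genuinely needed to close that step. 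Since the paper does not carry it out either, your proposal matches the published argument and, where it adds to it, correctly identifies --- but does not fill --- the remaining gap.
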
 
\begin{proof}
Let $W$ be a general point in the Debarre-Voisin fourfold given by a general three form $\omega$. Let us consider the subscheme of $\TT_{,\omega}(2,10)$ given by all two spaces contained inside $W$. This does not coincide with the full Grassmannian $\Gr(2,6)$, as the condition $\omega(W)=0$ does not imply $\omega \contr \W^2 U=0$ for all $U\subset W$ two-spaces. Notice that this is not the case if one considers three spaces contained in $W$, that is the construction of the Debarre-Voisin IHS fourfold as a moduli space of $\Gr(3,6)$ contained in the respective twentyfold.\\
On $\Gr(k,10)$ for all $k$ we have a sequence $0\to \mR \to V_{10}\otimes\mathcal{O}\to (V_{10}\otimes\mathcal{O})/\mR\to 0$ which dually gives a sequence $0 \to \mR^\perp \to V_{10}^\vee\otimes\mathcal{O}\to \mR^\vee \to 0$. This gives a filtration of $\W^3 V_{10}^\vee\otimes\mathcal{O}$ with factors $\W^3 \mR^\perp,$ $\W^2\mR^\perp\otimes \mR^\vee$, $\mR^\perp\otimes \W^2\mR^\vee$ and $\W^3\mR^\vee$. The three-form $\omega$ is a section of the last factor $\W^3\mR^\vee$ on $\Gr(6,10)$. On the zero locus of such a section, this lift to a section of $\mR^\perp\otimes\W^2\mR^\vee$, which corresponds to a map 
$V_{10}/W\rightarrow \W^2 W^\vee.$
The image of such a map is a four dimensional space $H_4$ of two forms on $W$, for every six space $W$ in the Debarre-Voisin twentyfold given by $\omega$.\\
Let $U\subset W$ be a point of $\TT_{,\omega}(2,10)$. The space $U$ is isotropic for all two forms in $H_4$, indeed if this were not the case we would have a two form $\sigma\in H_4$ such that $\sigma_{|U}$ is non degenerate and, by how forms in $H_4$ are obtained, this would imply $\omega \contr \W^2 U\neq 0$. On the contrary, in an appropriate basis, it is not difficult to show that $\omega \contr \W^2 U =0$ is implied by $\sigma(U)=0$ for all $\sigma\in H_4$. Thus, the scheme of subspaces $U\subset W$ with fixed $W$ is parametrized by a fourfold $Z_{\of(1)^4} \subset \Gr(2,W)$, which a Fano fourfold of index two, rational by \cite[Thm. 2.2.1]{fei},  with central cohomology $(h^{1,1}, h^{2,2})=(1,8)$. This gives a rational map between the Debarre-Voisin fourfold and the space of $Z_{\of(1)^4}$ contained in $\TT(2,10)$ (and in a fixed $\Gr(2,6)$). As by changing the point of the Debarre Voisin fourfold we change the ambient Grassmannian $\Gr(2,6)$, it is clear that such a map is generically injective, hence birational.
\end{proof}

\subsection{S7: a mixed (2,3) CY structure}
A curious yet interesting thing happens when we take a linear section $X_H$ of the above $\TT(2,10)$. Indeed by Lefschetz's hyperplane section theorem we know that the K3 structure of $\TT(2,10)$ in $H^6$ and $H^8$ must transfer to its linear section: what is most interesting is that the $H^7$ presents as well a Calabi-Yau structure of level three. To the best of our knowledge, this is the first example of a prime variety that has 2 different examples of CY-structure, of course in different weights. The precise result is 
\begin{proposition}\label{23cy} The Hodge numbers of a linear section $X_H \subset \TT(2,10)$ are
\begin{center}
{\small
\[\begin{matrix}
&&&&&&&&1 &&&&&&&&\\
&&&&&&&0&&0&&&&&&&\\
&&&&&&0 &&1&&0&&&&&&\\
 &&&&& 0 && 0 && 0 &&0&&& \\
&&&&0 &&0 && 2 &&0 && 0 &&&&\\
&&&0&&0 & & 0 & &0 && 0 && 0 &&&\\
&&0 && 0 && 1 &&22 &&1 &&0 && 0&&\\
&0 && 0 && 1 && 44&& 44 &&1&&0 && 0&\\
&&0 && 0 && 1 &&22 &&1 &&0 && 0&&\\
&&&0&&0 & & 0 & &0 && 0 && 0 &&&\\
&&&&0 &&0 && 2 &&0 && 0 &&&&\\
&&&&& 0 && 0 && 0 &&0&&& \\
&&&&&&0 &&1&&0&&&&&&\\
&&&&&&&0&&0&&&&&&&\\
&&&&&&&&1 &&&&&&&&
\end{matrix}\]}
\end{center}
\end{proposition}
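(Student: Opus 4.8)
The plan is to treat $X_H$ as a smooth ample divisor in the $8$-fold $\TT(2,10)$, whose Hodge diamond is already known by Proposition \ref{3k3}, and to reduce everything to a single cohomology group. By the Lefschetz hyperplane theorem the restriction $H^k(\TT(2,10))\to H^k(X_H)$ is an isomorphism of Hodge structures for $k\le 6=\ddim X_H-1$, and Poincaré duality on $X_H$ then fills in the rows $k\ge 8$. Consequently every row of the claimed diamond except the middle one $H^7$ is inherited directly from Proposition \ref{3k3}; in particular the two K3-type structures in $H^6$ and $H^8$ require no new work, and the only remaining task is the computation of $H^7(X_H)$. Note that $H^7(\TT(2,10))=0$, so the restriction map into $H^7(X_H)$ is the zero map and all of $H^7(X_H)$ is genuinely new (vanishing) cohomology that must be computed by hand.

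\textbf{The middle degree.} In the row $p+q=7$ the entries to be determined are $h^{1,6}$, $h^{2,5}$ and $h^{3,4}$ together with their conjugates. Since $X_H$ is Fano of index $2$, Kodaira vanishing gives $h^{0,7}=h^{7,0}=0$ at once. For the remaining three numbers I would compute the holomorphic Euler characteristics $\chi(\Omega^p_{X_H})$ for $p=1,2,3$, either by Riemann--Roch via a Chern class computation, or with Macaulay2, or through the Koszul/Borel--Bott--Weil spectral sequence exactly as in the treatment of \textbf{S5}. The key point is that every $h^{p,q}$ with $p+q\neq 7$ appearing in these alternating sums is already fixed by Lefschetz, so each Euler characteristic isolates a single unknown: one finds $\chi(\Omega^1)=h^{1,6}-1$, $\chi(\Omega^2)=3-h^{2,5}$ and $\chi(\Omega^3)=h^{3,4}-23$. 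Obtaining the values $\chi(\Omega^1)=-1$, $\chi(\Omega^2)=2$, $\chi(\Omega^3)=21$ then yields $h^{1,6}=0$, $h^{2,5}=1$, $h^{3,4}=44$, and Hodge symmetry supplies the conjugates. This reproduces the displayed diamond and exhibits the mixed structure: a level-$3$ (Calabi--Yau) piece $h^{2,5}=h^{5,2}=1$, $h^{3,4}=h^{4,3}=44$ in $H^7$, sitting between the two K3-type structures in $H^6$ and $H^8$.

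\textbf{Main obstacle and an alternative.} The computational heart is the correct evaluation of $\chi(\Omega^3)$, and in a Borel--Bott--Weil treatment the delicate step is, just as in Theorem \ref{t129}, the analysis of $\Sym^3 F|_{X}$, where $F$ denotes the dual of the bundle $\mQ^*(1)$ cutting $\TT(2,10)$ together with the extra Koszul factor coming from the linear form $H$. One must show that a $\GL(V_{10})$-equivariant map of the same type as the $\varphi_f$ of \textbf{S5} has maximal rank, which is precisely what pins down the dimension $44$ and, crucially, guarantees that the top level-$3$ piece $h^{2,5}$ does not vanish, so that the structure is a genuine $(2,3)$-mixed Calabi--Yau type rather than a degenerate K3. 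I expect this rank/surjectivity argument and the sheer bookkeeping of the BBW computation to be the main obstacle; the Lefschetz reduction itself is routine. As a more conceptual alternative, one could instead run the flag construction behind \eqref{diagrammoneswaggone}, now with $\mathrm{Fl}(2,3,10)$, relating $X_H$ to a linear section of $\Gr(3,10)$ and computing in $K_0(\mathrm{Var})$; the level-$3$ structure would then descend visibly from the central Hodge structure of that $\Gr(3,10)$ section, explaining geometrically why a Calabi--Yau threefold--type piece appears, in parallel with the derivation of Corollary \ref{cor29} deferred to \cite{nested}.
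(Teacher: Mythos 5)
Your proposal is correct and follows essentially the same route as the paper, which likewise inherits every row except the middle one from Proposition \ref{3k3} via Lefschetz and duality and then disposes of the weight-$7$ row by a Borel--Bott--Weil/Euler-characteristic computation ``similar to the ones above'', supplemented by the same geometric explanation through the flag construction that you mention as an alternative. Your explicit observation that each $\chi(\Omega^p_{X_H})$ for $p=1,2,3$ isolates exactly one unknown of the middle row is a clean way of organizing what the paper leaves implicit, and is precisely how the paper handles the analogous Corollary \ref{cor29} for the linear section of $\TT(2,9)$.
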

The above proposition can be proved with a Borel-Bott-Weil computation similar to the ones above. We will not add further details here in order to preserve the readability of the current paper. We will indeed give a sketch of a geometrical explanation of why such numbers appear.\\
Indeed as an expert reader might notice, the 3CY structure in our $X_H$ has the same dimension of the 3CY structure appearing in the $H^{23}$ of a linear section $X_1 \subset \Gr(3,11)$.  We will give now an explanation on why and how this 3CY structure gets projected from such varieties to our $X_H \subset \TT(2,10)$. This will be only sketched, since the details (in a more general context) will appear in the forthcoming \cite{nested}. The first steps are the following lemmata.
\begin{lemma} A linear section $X_1 \subset \Gr(3,11)$ is a Fano 23-fold of 3CY type. Indeed its non-zero Hodge numbers of weight 23 are $(h^{10,13},h^{11,12}, h^{12,11}, h^{13,10})=(1,44,44,1)$.
\end{lemma}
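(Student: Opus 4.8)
The plan is to compute the Hodge numbers of a general linear section $X_1 = V(\sigma_1) \subset \Gr(3,11)$ by the same Borel--Bott--Weil machinery laid out in the $\TT(2,9)$ section, since $X_1$ is the zero locus over $\Gr(3,11)$ of a general section of the Pl\"ucker line bundle $\of_G(1)$. First I would record the basic numerology: $\Gr(3,11)$ has dimension $24$ and index $11$, so a smooth hyperplane section $X_1$ is a Fano $23$-fold of index $10$. Since $\Gr(3,11)$ is a \emph{central} variety (all its Hodge structures have level $0$), Lefschetz's hyperplane theorem pins down every $h^{p,q}(X_1)$ with $p+q \neq 23$: they agree with those of the ambient Grassmannian. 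Hence the only genuinely new information lives in middle cohomology $H^{23}(X_1)$, and the entire problem reduces to computing the four numbers $h^{10,13}, h^{11,12}, h^{12,11}, h^{13,10}$.

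Next I would set up the conormal sequence $0 \to \of_G(-1)|_{X_1} \to \Omega^1_G|_{X_1} \to \Omega^1_{X_1}\to 0$ and its exterior/symmetric powers, exactly as in the $\TT(2,9)$ computation, twisting by $\of(-1)$ where needed. Because the bundle cutting $X_1$ is a line bundle, the Koszul resolution is extremely short --- just $0 \to \of_G(-1) \to \of_G \to \of_{X_1}\to 0$ --- so the spectral sequence $\mathbf{E}_1^{-q,p} = H^p(\Gr(3,11), \Omega^k_G \otimes \W^q \of_G(-1)) \Rightarrow H^{p-q}(X_1, \Omega^k_G|_{X_1})$ has only two columns and collapses quickly. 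The task is then to compute $H^\bullet(\Gr(3,11), \Omega^k_G)$ and $H^\bullet(\Gr(3,11), \Omega^k_G(-1))$ for $k = 10, 11, 12, 13$ via the BBW sort-and-count-disorders algorithm applied to the Schur functors appearing in the irreducible decomposition of $\Omega^k_G$. By Hodge symmetry $h^{10,13}=h^{13,10}$ and $h^{11,12}=h^{12,11}$, and the level-$3$ shape $(1,44,44,1)$ means I only really need to verify two of them; the outer ones $h^{10,13}=1$ should drop out as a single nonzero BBW term, while $\chi(\Omega^k_{X_1})$ computed by Riemann--Roch (or Macaulay2) provides an independent cross-check that forces the middle number to be $44$.

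The main obstacle, as in the $\TT(2,9)$ and $\TT(2,10)$ cases, will be the bookkeeping in the highest-weight exterior powers: decomposing $\Omega^k_G$ into irreducible homogeneous bundles $\Sigma_\alpha \mQ \otimes \Sigma_\beta \mR$ for $k$ near $12$ involves many summands, and for each one must assemble $\gamma+\delta$, sort it, count disorders, and identify which (if any) single summand survives with nonzero cohomology. I expect that, just as in the earlier sporadic computations, all but one or two Schur summands turn out acyclic, and the surviving contributions assemble --- through the collapse of the two-column spectral sequence and the long exact cohomology sequence of the conormal filtration --- into precisely the claimed $(1,44,44,1)$. I would lean on Hard Lefschetz to bound the middle Hodge numbers from below and on the Riemann--Roch value of $\chi(\Omega^k_{X_1})$ to bound them from above, so that the explicit BBW output only needs to be pinned down up to these constraints rather than computed entirely by hand. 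The level-$3$ conclusion then follows immediately from the vanishing of $h^{p,q}$ for $p+q=23$ outside the range $10 \le p \le 13$.
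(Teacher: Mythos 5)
Your strategy is sound and would establish the lemma, but it is not the route the paper takes here: the paper disposes of this statement in one line by invoking the Griffiths-type ring for complete intersections in Grassmannians from \cite{eg1}, which packages the vanishing middle cohomology of the hyperplane section as graded pieces of an explicitly presented Jacobian-type ring, so the four numbers $(1,44,44,1)$ arise as dimensions of graded pieces rather than from a Borel--Bott--Weil ladder. Your proposal instead runs the full BBW/Koszul machinery of the $\TT(2,9)$ and $\TT(2,10)$ sections. That works, but note two things. First, since $X_1$ is cut by a line bundle, the reduction you yourself describe already finishes the job without any BBW at all: centrality of $\Gr(3,11)$ together with Lefschetz and Hard Lefschetz determines every $h^{p,q}(X_1)$ off the middle row (for $p+q>23$ these are fixed by $b_{23+k}(X_1)=b_{23-k}(\Gr(3,11))$, so they are \emph{determined by}, not literally equal to, the ambient numbers in the same degree), and then each of $\chi(\Omega^{10}_{X_1})$ and $\chi(\Omega^{11}_{X_1})$, computable by Hirzebruch--Riemann--Roch on the two-term Koszul resolution, contains exactly one unknown, namely $h^{10,13}$ and $h^{11,12}$ respectively; this is precisely how the paper handles the parallel statements in Proposition \ref{ah} and Lemma \ref{symphodge}. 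Second, if you do insist on the BBW route, you will need the twisted groups $H^\bullet(\Gr(3,11),\Omega^k_G(-j))$ for the whole recursive ladder $0\le k\le 13$ with $j$ growing at each step, not only $j=0,1$ and $k=10,\dots,13$ as written; Snow's vanishing theorems make most of these acyclic, which is what keeps the bookkeeping finite. In short: your argument is correct but heavier than either the paper's citation of \cite{eg1} or the Euler-characteristic shortcut that your own reduction already puts within reach.
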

This lemma can easily be proved, for example using our results in \cite{eg1}. We notice that such a variety is of 3CY even in the (stronger) categorical sense, see \cite[4.5]{kuzicy}. The orthogonal complement to the Calabi-Yau category is generated by 150 exceptional objects. The following Lemma is less obvious
\begin{lemma}\label{symphodge} A linear section $Y_1 \subset \SGr(3,10)$ is a Fano 17-fold of 3CY type. Indeed its non-zero Hodge numbers of weight 17 are $(h^{7,10},h^{8,9}, h^{9,8}, h^{10,7})=(1,44,44,1)$.
\end{lemma}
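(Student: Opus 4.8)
The plan is to deduce the weight-$17$ Hodge numbers of $Y_1$ from those of the linear section $X_1\subset\Gr(3,11)$ of the previous lemma, by a projection of the same flavour as the diagram \eqref{diagrammoneswaggone} and the $\PP^2/\PP^3$-fibration used in \textbf{S2}. Write $Y_1=\{U\in\Gr(3,10): \sigma|_U=0,\ \omega|_U=0\}$, where $\sigma\in\W^2 V_{10}^\vee$ is the two-form cutting out $\SGr(3,10)$ and $\omega\in\W^3 V_{10}^\vee\cong H^0(\SGr(3,10),\of(1))$ is the linear form cutting out $Y_1$. Set $V_{11}=V_{10}\oplus\langle v\rangle$ and $\widetilde\omega=\omega+v^\vee\wedge\sigma\in\W^3 V_{11}^\vee$, so that $X_1=V(\widetilde\omega)\subset\Gr(3,11)$ is the corresponding $23$-fold. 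First I would record that $Y_1$ is a smooth Fano $17$-fold of index $7$: indeed $\SGr(3,10)$ has dimension $18$ and index $8$, and adjunction for a general hyperplane section gives $\iota_{Y_1}=7>0$, while smoothness holds for generic $(\sigma,\omega)$ by Bertini.

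Next I would set up the projection $\pi\colon X_1\dashrightarrow\Gr(3,10)$ induced by $V_{11}\to V_{10}$ and analyse its fibres. Parametrising a three-space $U\not\ni v$ by its image $\bar U\in\Gr(3,10)$ together with a form $\lambda\in\bar U^\vee$ (so $U=\{u+\lambda(u)v\}$), a direct expansion gives $\widetilde\omega|_U=\omega|_{\bar U}+\lambda\wedge(\sigma|_{\bar U})$ in $\W^3\bar U^\vee\cong\C$; the fibre over $\bar U$ is thus the affine space of solutions $\lambda$ of $\omega|_{\bar U}=-\lambda\wedge\sigma|_{\bar U}$. Since a two-form on a three-space has rank $0$ or $2$, there are two strata: over $\bar U$ with $\sigma|_{\bar U}$ of rank $2$ (the complement of $\SGr(3,10)$) the map $\lambda\mapsto\lambda\wedge\sigma|_{\bar U}$ is onto $\C$ with two-dimensional kernel, so the fibre is an $\mathbb{A}^2$-torsor; over $\bar U\in\SGr(3,10)$ one has $\sigma|_{\bar U}=0$, and the condition degenerates to $\omega|_{\bar U}=0$, giving an $\mathbb{A}^3$ of solutions exactly when $\bar U\in Y_1$ and none when $\bar U\in\SGr(3,10)\smallsetminus Y_1$. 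Finally the locus $v\in U$ contributes a copy of $\SGr(2,10)$.

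From this stratification the identity in $K_0(\mathrm{Var})$, equivalently at the level of Hodge structures, reads
$$[X_1]=\mathbb{L}^2\big([\Gr(3,10)]-[\SGr(3,10)]\big)+\mathbb{L}^3\,[Y_1]+[\SGr(2,10)],$$
where $\mathbb{L}$ denotes the Lefschetz class. The Grassmannian and both symplectic Grassmannians are homogeneous, hence central, so they contribute only Tate classes; therefore the non-Tate part of $[X_1]$ equals $\mathbb{L}^3$ times the non-Tate part of $[Y_1]$. By the previous lemma the only non-Tate part of $X_1$ is its weight-$23$ Calabi--Yau structure with $(h^{10,13},h^{11,12},h^{12,11},h^{13,10})=(1,44,44,1)$; untwisting by $\mathbb{L}^3$ shifts bidegrees by $(-3,-3)$ and weight by $-6$, placing the non-Tate part of $Y_1$ in $H^{17}$ with $(h^{7,10},h^{8,9},h^{9,8},h^{10,7})=(1,44,44,1)$, which is the claim. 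The centrality of $\SGr(3,10)$ together with the Lefschetz hyperplane theorem (and Poincar\'e duality on $Y_1$) confirms that $H^k(Y_1)$ is of Tate type for $k\neq 17$, so these are indeed all the non-zero Hodge numbers of weight $17$.

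I expect the main obstacle to be the rigorous bookkeeping of the degeneration locus and of the affine-bundle claim: one must check that the rank-$2$ stratum carries a genuine $\mathbb{A}^2$-torsor structure (a torsor under the rank-$2$ bundle $\ker(\lambda\mapsto\lambda\wedge\sigma|_{\bar U})$ over $\Gr(3,10)\smallsetminus\SGr(3,10)$) and that the $\SGr(2,10)$ coming from $v\in U$ is correctly accounted for, so that the motivic identity above is an equality and not merely a numerical coincidence. An independent verification, of the kind carried out for $\TT(2,9)$ and $\TT(2,10)$, is available by realising $\SGr(3,10)=V(\W^2\mR^\vee)\subset\Gr(3,10)$, writing the double Koszul resolution of $Y_1$, and running the Borel--Bott--Weil machinery of Section~3; there the delicate point is, exactly as for the $\Sym^3 F$ term in \textbf{S5} and \textbf{S6}, the surjectivity of the Lie-action map that produces the middle number $44$. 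The full details of the projection construction, in greater generality, are deferred to \cite{nested}.
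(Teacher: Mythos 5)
Your proposal is correct, but it takes a genuinely different route from the one indicated in the paper. The paper's stated proof is computational: since $\SGr(3,10)$ is a central variety (via a torus action with isolated fixed points), Lefschetz and Poincar\'e duality determine all of $H^*(Y_1)$ except the middle weight, and the weight-$17$ Hodge numbers are then extracted from the Euler characteristics $\chi(\Omega^i_{Y_1})$ exactly as in Corollary \ref{hodges2}; the geometric relationship with $X_1\subset\Gr(3,11)$ is only asserted there at the categorical level (a fully faithful functor $D^b(Y_1)\to D^b(X_1)$) and deferred to \cite{nested}. You instead make that relationship explicit and Hodge-theoretic: writing $\widetilde\omega=\omega+v^\vee\wedge\sigma$ and stratifying the projection $\Gr(3,11)\dashrightarrow\Gr(3,10)$ by the rank of $\sigma|_{\bar U}$, you obtain the identity $[X_1]=\mathbb{L}^2([\Gr(3,10)]-[\SGr(3,10)])+\mathbb{L}^3[Y_1]+[\SGr(2,10)]$ in $K_0(\mathrm{Var})$, whose $E$-polynomial consequence (all auxiliary terms being of Tate type) transfers the $(1,44,44,1)$ structure of $X_1$ down to weight $17$ of $Y_1$ after a Tate twist. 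Your fibre analysis checks out: $\lambda\mapsto\lambda\wedge\sigma|_{\bar U}$ is surjective onto $\W^3\bar U^\vee\cong\C$ precisely off $\SGr(3,10)$, the degenerate fibres are $\bar U^\vee\cong\A^3$ exactly over $Y_1$, and the locus $v\in U$ contributes $\SGr(2,10)$; the affine-bundle strata are Zariski-locally trivial (torsors under coherent sheaves), so the motivic identity is legitimate, and since all varieties involved are smooth projective the $E$-polynomial does determine the individual Hodge numbers, even forcing the vanishing of all other off-diagonal $h^{p,q}(Y_1)$ without a separate appeal to Lefschetz. What each approach buys: the paper's computation is self-contained but gives no insight into why the number $44$ reappears, whereas your projection explains the coincidence with $X_1\subset\Gr(3,11)$ geometrically and avoids the middle-weight Borel--Bott--Weil work entirely, at the cost of depending on the preceding lemma and on the stratification bookkeeping you rightly flag. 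Your argument is in fact closer in spirit to the construction the paper postpones to \cite{nested} than to the proof it actually sketches here.
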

It can be proved for example with similar calculations to Corollary \ref{hodges2}, since we already know that the symplectic Grassmannian $\SGr(k, n)$ is a central variety. However one can prove that the statement is more than merely numerological. Indeed one can show the existence of a fully faithful functor $\Phi: D^b(Y_1) \to D^b(X_1)$ and a semiorthogonal decomposition of $D^b(X_1)$ with $\Phi D^b(Y_1)$ as first component, together with a bunch of exceptional objects. This obviously proves the Hodge-theoretical statement as well. This in turn explains the 3CY structure in $X_H \subset \TT(2,10)$. Indeed it is possible to write a diagram like the one for $\TT(2,9)$ in \ref{diagrammoneswaggone}, appropriately modified; in particular we have to pass through the symplectic partial flag $\mathrm{SFL}(2,3,9)$. The construction is more involved, but it is enough to explain that this mixed (2,3) Calabi-Yau structure ultimately comes from an hyperplane section of (respectively) $\Gr(3,10)$ and $\Gr(3,11)$.  An interesting problem is therefore to look for other examples of varieties with mixed CY structure that are not induced by these constructions tricks outlined in \cite{nested}.  

\subsection{S8: other K3 structures as $X_L \subset \TT(k,10)$} 
A similar construction can be applied to $\TT(4,10)$, $\TT(5,10)$ and their linear sections. Indeed both of them will inherite a bunch of K3 type structure as in \ref{diagrammoneswaggone}. As an example, in the case of $\TT(4,10)$ the diagram will be 
\begin{equation}\xymatrix{
& F \ar[dl]^\phi & X_{\pi^* H} \ar[dr]^p \ar[dl]^ \phi \ar@{^{(}->}[r] & \mathrm{Fl}(3,4,10) \ar[dr]^p \\
Z_H \ar@{^{(}->}[r] & \Gr(4,10) & &  X_H \ar@{^{(}->}[r] & \Gr(3,10)   
}\end{equation}
The map $p$ is a $\PP^6$ bundle, whereas $\phi$ is generically a $\PP^2$ bundle specialising to a $\PP^3$ bundle over $Z_H$. This suggests that $\TT(4,10)$ should have  7xK3 type structure, and a Borel-Bott-Weil calculation confirms this. A similar construction, albeit more complicated can be performed as well for $\TT(5,10)$, where the fibers of the map on the right hand side of the diagram are $\Gr(2,7)$. Moreover on the left side of the diagram there are three type of fibers, corresponding to (generically) smooth hyperplane sections of $\Gr(2,5)$, singular sections in codimension 3 and the whole of $\Gr(2,5)$ in codimension 10. Of course the linear sections of both $\TT(4,10)$ and $\TT(5,10)$ inherits some structure of K3 type by Lefschetz theorem (depending of course by the dimension of the linear subspace). It is interesting to notice codimensional 1 and 2 linear section will be of mixed CY type, with an argument equivalent to the one of the previous section. Finally we remark that even $\TT(6,10)$ and $\TT(1,10)$ admits structures of K3 type: the first one is nothing but the IHS fourfold of Debarre-Voisin, while a the second one can be used to construct the Peskine variety in $\PP^9$, although formally the latter is given by a degeneracy locus. In \cite{nested} we will indeed use this approach to compute the Hodge numbers of this special variety. 
\section*{Appendix A: some extra Fano of 3-CY type}
The methods of this paper can be used to produce Fano of $k$- CY type for every $k$. Another interesting case is when the variety is 3CY. This has been already considered by Iliev and Manivel in \cite{ilievmanivel}. They classified the Fano of 3CY type that can be obtained as linear or quadratic section of homogeneous space, under the additional assumption that the $H^1(T_X)$ was to be isomorphic to one of the Hodge groups of $X$. Many more examples can be found using our method, especially if  this condition is not assumed. We do not write the full list here, since we believe it would not fit well with rest of the paper. However it is worthy to point out that many of the examples can be produced as linear sections of symplectic and bisymplectic Grassmannian, with an explanation as in Lemma \ref{symphodge}. \\
Indeed such examples include $ X_1 \subset \SGr(3,10)$ and $X_1 \subset \SGr(4,9)$ in the symplectic Grassmannian and $X_1 \subset \Ml(3,9), X_1 \subset \Ml(4,9)$ and $X_2 \subset \Ml(2,6)$ for the bisymplectic. We point out that the Hodge structure of the linear section of $\SGr(3,10)$ and $\Ml(3,9)$ comes from an hyperplane section of $\Gr(3,11)$ (which is as well of 3CY type) with an argument similar to Lemma \ref{symphodge} to be fully spelled out in \cite{nested}. A different but not dissimilar argument can be made for $X_2 \subset \Ml(2,6)$ and explain how this structure of 3CY comes from $X_2 \subset \Gr(2,6)$. In the symplectic Grassmannian we find as well $X_2 \subset \SGr(4,7)$. In the Orthogonal Grassmannian we find the examples of linear sections of $\OGr(3,9), \OGr(4,9)$ and $\OGr^+(5,10)$. The latter is equivalent to a quadratic section of $\mathbb{S}_{10}$ in the spinor embedding (since the line bundle required is the square root of the Pl\"ucker one). This is already in the list of Iliev and Manivel, so we will not include it.\\
Another interesting example is a section $X_H \subset \mathrm{SO}(3,8)$ the \emph{ortho-symplectic} Grassmannian. The latter is given by the zero locus of $\W^2 \mR^\vee \oplus \Sym^2 \mR^\vee$ on $\Gr(3,8)$. We use the notation $X_H$ to point out that, as in the case of Orthogonal Grassmannian $\OGr(3,8)$, $\mathrm{SO}(3,8)$ has Picard rank equal to 2. We checked that there are no other examples of Fano of CY3 type in the orthosymplectic Grassmannian. \\
The cohomology of the orthosymplectic Grassmannian can be computed using a torus action on it (as remarked also in \cite{benphd}, and then Lefschetz's theorem and Borel-Bott-Weil theorem allow us to compute the cohomology of its linear sections in many cases. First, notice that two general symmetric and skew symmetric forms $s,\lambda$ on a space of dimension $2n$ can be put in the following canonical form:
$$s =\sum^{2n} s_ix_i^2; \ \  \lambda =\sum^{n} x_{2i}\wedge x_{2i+1}.$$
In this way, the stabilizer of these forms contains as maximal torus the group $(\C^*)^n$. In a similar fashion to \cite[Prop 4.2.1]{benphd}, one can prove that this maximal torus has only isolated fixed points (more precisely, $2^k{n\choose k}$) and therefore the cohomology of the orthosymplectic grassmannian is concentrated in the $(p,p)$ part (and the characteristic of the cotangent sheaf and its exterior powers give us the desired cohomology).\\
From this, we obtain the following cohomology for $X_H$:
\begin{center}
{\small
\[\begin{matrix}
&&&&&&&&1 &&&&&&&&\\
&&&&&&&0&&0&&&&&&&\\
&&&&&&0 &&2&&0&&&&&&\\
 &&&&& 0 && 0 && 0 &&0&&& \\
&&&&0 &&0 && 7 &&0 && 0 &&&&\\
&&&0&&1 & & 45 & &45 && 1 && 0 &&&\\
&&&&0 &&0 && 7 &&0 && 0 &&&&\\
&&&&& 0 && 0 && 0 &&0&&& \\
&&&&&&0 &&2&&0&&&&&&\\
&&&&&&&0&&0&&&&&&&\\
&&&&&&&&1 &&&&&&&&
\end{matrix}\]}
\end{center}
We point out that $X_H \subset \mathrm{SO}(3,8), \ X_2  \subset \SGr(4,7)$ and $X_2 \subset \Ml(2,6)$ are particularly interesting as Fano of 3CY type, since they are of dimension 5 (the minimal possible) and therefore relevant for testing a modified version of Kuznetsov's conjecture on rationality and derived categories. We collect in the next table the 3CY structure mentioned in the above discussion. We mention that in analogy with the K3 case, $\TT(2,11)$ can be considered as well as an example of 3$\times$CY structure. Taking other $k$ and appropriate number of linear sections is possible as well to produce examples of \emph{mixed} $(3,j)$ CY structure. However we do not include them in the following table. Of course more examples could be found by considering products and such as in the FK3 case, but we decided to not consider them here in order to keep the length of this paper within an acceptable limit. We of course do not include the examples already considered in \cite{ilievmanivel}.

\begin{center}
\begin{tabular}{@{} *9l @{}l @{}l @{}l @{}l @{}l @{}}    \toprule
Type &dim. &$\iota_X$& $h^{n-1/2,n+1/2}$ \\ \midrule
$X_1 \subset \OGr(3,9)$ & 11 &4& 49\\
$X_1 \subset \OGr(4,9)$ & 9 &3&70\\
$X_1 \subset \SGr(3,10)$ & 17 &7&44 \\
$X_1 \subset \SGr(4,9)$ & 13 &5&45 \\
$X_1 \subset \Ml(3,9)$ & 11 &4& 44\\
$X_1 \subset \Ml(4,9)$ & 7 &2&45 \\
$X_2 \subset \Ml(2,6)$ & 5 &2&67 \\
$X_H \subset \mathrm{SO}(3,8)$ & 5 &1& 45 \\
$X_2 \subset \SGr(4,7)$ & 5 &2& 72 \\


    \bottomrule
 \hline
\end{tabular}
\captionof{table}{3CY structure in $\OGr, \SGr, \Ml$ and $\mathrm{SO}$} \label{table3}
\end{center} 

\section*{Appendix B: infinite CY series}
During our search we identified some interesting class of varieties. Although not directly related to the main story of this paper, they have some interesting features that is worth to underline. As an example we identified some interesting infinite families of varieties (of every dimension) with trivial canonical bundle obtained using the same bundles in different Grassmannians. We checked that these varieties are actually Calabi-Yau for low dimension (up to 6). We expect them to be always like this. \\
We describe now these series of varieties, according to the type of bundles involved.
\begin{align*} &A(k,l):= \mQ(1) \oplus \W^2\mR^{\vee}  \textrm{ on } \Gr(k, k+l);\\
&B(k,l):=  \mQ^{\vee}(1) \oplus \Sym^2 \mR^{\vee}\textrm{ on } \Gr(k, k+l);\\
&C(k, k+1):=\Sym^2 \mR^{\vee}\oplus \W^2\mR^{\vee}\oplus\mathcal{O}(1) \textrm{ on } \Gr(k,2k+1).
\end{align*}
$A(k,l)$ has dimension $l(k-1)-{k \choose 2}$, $B(k,l)$ has dimension $l(k-1)-{k+1 \choose 2}$ and $C(k)$ has dimension $k-1$. Notice that $A(k,l)$ can naturally be seen as $Z_{\mQ(1)} \subset \SGr(k,k+l)$, $B(k,l)$ as $Z_{\mQ^{\vee}(1)} \subset \OGr(k, k+l$) and $C$ is a linear section of the ortho-symplectic Grassmannian. In particular, as in \cite{kuznetsovc5} in the case of bisymplectic Grassmannian, one can prove that \[ C(k,k+1) \cong X_{(2, \ldots, 2)} \subset (\PP^1)^k.\]\\
When $k=2$, the zero locus of a general global section of $A(2,l)$ is indeed deformation of  a complete intersection given by $(\of(1))^{l+3} $ on $\Gr(2, l+3)$. Indeed first notice that  on $\Gr(2, l+3)$ we have $(\of(1))^{l+3} \cong \mQ(1) \oplus \mR(1)\cong \mQ(1) \oplus \mR^{\vee} $. Then notice that the zero locus of a general global section of $ \mQ(1) \oplus \mR^{\vee}$ on $\Gr(2, l+3)$ is isomorphic to the zero locus of a general global section of $\mQ(1) \oplus \of(1)$ on $\Gr(2, l+2)$. This follows from the standard fact that $Z_{\mR^{\vee}} \subset \Gr(k, n) \cong \Gr(k, n-1)$ and under the following isomorphism $\mQ(1)_{k,n}$ projects to $\mQ(1)_{k,n-1} \oplus \of(1)$ .\\
For dimension $d=2,3,4$ we refer to \cite{benedetti}, \cite{inoue2016complete}. For $d=5,6$ the Calabi-Yaus in the series $A$ and $B$ are the following. We do not include $B(5,5)$, since it can bee seen as a deformation of the double spinor variety studied by Manivel in \cite{manivelspinor}. The Hodge numbers are computed in Proposition 3.3, together with the fact that this family is locally complete. Since $C$ is indeed a well-known class of variety in disguise, we will not compute the Hodge numbers for the first values of the series. In the following list of invariants we do not include either trivially known Hodge numbers such as $h^{0, n}$. Moreover the number not listed are always $0$.

\begin{center}
\begin{tabular}{@{} *9l @{}l @{}l @{}l @{}l @{}l @{}}    \toprule
dim. & Type &$h^{1,1}$& $h^{2,2}$&$h^{4,1}$&$h^{3,2}$\\ \midrule
5 & $A(2,6)$ & 1&2& 163 & 1784 \\
5 & $A(3,4)$ & 1&2& 148&1619 \\
5 & $B(4,5)$ & 1&2& 165&1806 \\

    \bottomrule
 \hline
\end{tabular}
\captionof{table}{First values of infinite series for fivefolds} \label{table5}
\end{center} 

\begin{center}
\begin{tabular}{@{} *9l @{}l @{}l @{}l @{}l @{}l @{}}    \toprule
dim. & Type &$h^{1,1}$& $h^{2,2}$&$h^{5,1}$&$h^{4,2}$&$h^{3,3}$\\ \midrule

6 & $A(2,7)$ & 1&2& 251&5202&14004 \\
6 & $A(4,4)$ & 1&1& 251&5181&13960\\
6 & $B(2,9)$ & 1&2& 120 &2254&6274\\
6 & $B(3,6)$ & 1&2& 125&2380&6596 \\

    \bottomrule
 \hline
\end{tabular}
\captionof{table}{First values of infinite series for sixfolds} \label{table4}
\end{center} 

An interesting question, which however falls beyond the scope of this paper, is to investigate whether the varieties constructed in this way are generic in moduli, that is whether all of their deformations are embedded in the same Grassmannian. This can be done by a direct computation of $h^1(TG|_X)$ using Koszul complex and Borel-Bott-Weil theorem, however these calculations are quite demanding in each specific case, and a general argument is out of reach.

 \section*{Appendix C: fake K3 structure}
The numerical condition in \eqref{condition} restricted most of our search to vector bundles in which one of the irreducible summand is linear. One can of course try to rearrange this condition in order to eliminate the constraint. Indeed this is geometrically meaningful, as for example $\TT(2,10)$ shows (it is a zero locus of an indecomposable bundle that is non-linear, with slope $\mu=c_1(E)/r(E)=7/8$). It is possible, and we plan to do so, to fully investigate this case. \\
During a preliminary search we found this example, the zero locus $X_{\mR^\vee(1)} \subset \Gr(2,6)$. It is a sixfold of index 3, defined by a bundle of slope $\mu=3/2$, satisfying all our preliminary numerological condition. Although it is not of K3 type, it is rather curious, and we decided to add it anyway. Indeed its Hodge numbers are

\begin{center}
{\small
\[\begin{matrix}
&&&&&&&&1 &&&&&&&&\\
&&&&&&&0&&0&&&&&&&\\
&&&&&&0 &&1&&0&&&&&&\\
 &&&&& 0 && 0 && 0 &&0&&& \\
&&&&0 &&0 && 2 &&0 && 0 &&&&\\
&&&0&&0 & & 0 & &0 && 0 && 0 &&&\\
&&0 && 0 && 0 &&22 &&0 &&0 && 0&&\\
&&&0&&0 & & 0 & &0 && 0 && 0 &&&\\
&&&&0 &&0 && 2 &&0 && 0 &&&&\\
&&&&& 0 && 0 && 0 &&0&&& \\
&&&&&&0 &&1&&0&&&&&&\\
&&&&&&&0&&0&&&&&&&\\
&&&&&&&&1 &&&&&&&&
\end{matrix}\]}
\end{center}

The absence of the 1 in $h^{4,2}$ is explained by a Borel-Bott-Weil computation, since an inconvenient cancellation in the spectral sequence occurs. It is possible that some higher-dimensional analogue of this \textit{false positive} may occur, although we expect this to be quite an exception and not the general rule.

\end{document}